\newtheorem{theorem}{Theorem}
\newtheorem*{theoremY*}{Theorem Y}
\newtheorem{lemma}{Lemma}
\newtheorem*{claim*}{Claim}
\newtheorem{conjecture}{Conjecture}
\theoremstyle{definition}
\theoremstyle{remark}
\newtheorem*{remark*}{Remark}
\theoremstyle{definition}
\newtheorem{question}{Question}
\renewcommand{\Bbb}[1]{\mathbb{#1}}
\newcommand{\N}{{\Bbb N}}         
\newcommand{\I}{{\Bbb I}}
\newcommand{\R}{{\Bbb R}}        
\newcommand{\Z}{{\Bbb Z}}         
\newcommand{\cA}{{\mathcal A}}
\newcommand{\cH}{{\mathcal H}}
\newcommand{\cM}{{\mathcal M}}
\newcommand{\x}{\mathbf{x}}
\newcommand{\X}{\mathbf{X}}
\newcommand{\y}{\mathbf{y}}
\newcommand{\p}{\mathbf{p}}
\newcommand{\q}{\mathbf{q}}
\newcommand{\0}{\mathbf{0}}
\newcommand{\NN}{\mathbb{N}}
\newcommand{\RR}{\mathbb{R}}
\newcommand{\ZZ}{\mathbb{Z}}
\newcommand{\br}{\mathbf r}
\newcommand{\bz}{\mathbf z}
\newcommand{\bx}{\mathbf x}
\DeclarePairedDelimiter{\abs}{\lvert}{\rvert}
\DeclarePairedDelimiter{\set}{\lbrace}{\rbrace}
\DeclarePairedDelimiter{\floor}{\lfloor}{\rfloor}
\DeclarePairedDelimiter{\ceil}{\lceil}{\rceil}
\DeclarePairedDelimiter{\parens}{\lparen}{\rparen}
\DeclarePairedDelimiter{\brackets}{\lbrack}{\rbrack}
\def\eps{{\varepsilon}}
\title[Inhomogeneous approximation for systems of linear
forms]{Inhomogeneous approximation for systems of linear forms with
  primitivity constraints}
\author{Demi Allen}
\author{Felipe A.~Ram{\'i}rez}
\address{D.~Allen: Department of Mathematics and Statistics,  University of Exeter, Harrison Building, North
Park Road, Exeter, EX4 4QF, UK.}
\address{F.~A.~Ramirez: Department of Mathematics and Computer Science, Wesleyan
University, 265 Church Street, Middletown, CT 06459, USA.}
\subjclass[2020]{Primary: 11J83, 11J20, 11J13, 11K60}
\keywords{Diophantine approximation, metric number theory, primitive points}
\begin{document}
\frenchspacing

\maketitle

\begin{abstract}
  We study (inhomogeneous) approximation for systems of linear forms
  using integer points which satisfy additional primitivity
  constraints. The first family of primitivity constraints we consider
  were introduced in 2015 by Dani, Laurent, and Nogueira, and are
  associated to partitions of the coordinate directions. Our results
  in this setting strengthen a theorem of Dani, Laurent, and Nogueira,
  and address problems posed by those same authors. The second
  primitivity constraints we consider are analogues of the coprimality
  required in the higher-dimensional Duffin--Schaeffer conjecture,
  posed by Sprind\v{z}uk in the 1970's and proved by Pollington and
  Vaughan in 1990. Here, with attention restricted to systems of
  linear forms in at least three variables, we prove a univariate
  inhomogeneous version of the Duffin--Schaeffer conjecture for
  systems of linear forms, the multivariate homogeneous version of
  which was stated by Beresnevich, Bernik, Dodson, and Velani in 2009
  and recently proved by the second author.
\end{abstract}

\setcounter{tocdepth}{1} 
\tableofcontents

\section{Introduction}
\label{sec:introduction}

We are concerned here with the problem of determining whether for a
given sequence $(B_q)_{q=1}^\infty$ of balls in $\RR^m$ and a typical
$\x\in\operatorname{Mat}_{n\times m}(\RR)$ there are infinitely many
$(\p,\q)\in\ZZ^m\times\ZZ^n$ such that
\begin{equation}\label{eq:target}
  \q\x - \p \in B_{\abs{\q}},
\end{equation}
where $\abs{\q} =\max\{|q_1|,\dots,|q_n|\}$ denotes the maximum norm,
subject to the integer vectors $(\p,\q)$ satisfying additional
primitivity conditions. We consider two different kinds of conditions:
the first were introduced in work of Dani, Laurent, and
Nogueira~\cite{DLN}, and the second are relevant to a version of the
Duffin--Schaeffer conjecture for systems of linear forms.

The first type of primitivity conditions we consider---those
introduced by Dani, Laurent, and Nogueira---are associated to
partitions $\pi$ of $\set{1, 2, \dots, m + n}$. They are natural
refinements of the condition $\gcd(\p,\q)=1$ which arises often in the
classical setting. For an integer vector $\q \in \Z^n$ we write
$\gcd(\q)$ to mean $\gcd(q_1,\dots, q_n)$. In this first setting
(which is described in more detail in
Section~\ref{sec:partition-reduction}), we consider the set
$P(\pi)\subset \ZZ^m\times\ZZ^n$ consisting of points $(\p,\q)$ whose
coordinates corresponding to each partition element of $\pi$ are coprime. We
seek solutions to~(\ref{eq:target}) with $(\p,\q)\in P(\pi)$. In this
setting, Dani, Laurent, and Nogueira proved a doubly-metric
Khintchine--Groshev-type theorem~\cite[Theorem~1.1]{DLN} with certain
assumptions on the partition $\pi$ and a monotonicity assumption on
the sequence of the radii of the balls~$(B_q)_{q=1}^{\infty}$.

\theoremstyle{plain}
\newtheorem*{theorem*}{Theorem}
\begin{theorem*}[Dani--Laurent--Nogueira~\cite{DLN}]
  Let $m,n\in\NN$ and suppose $\pi = \set{\pi_1, \dots, \pi_k}$ is a
  partition of $\set{1, \dots, m+n}$ with $\abs{\pi_j}\geq m+1$ for
  each $j=1,\dots,k$. (Here, the notation $|\pi_j|$ denotes the number
  of elements in the partition component $\pi_j$.) If
  $\psi:\NN\to\RR_{\geq 0}$ is a mapping such that
  $x\mapsto x^{n-1}\psi(x)^m$ is non-increasing and the series
  \begin{equation}\label{eq:6}
    \sum_{q=1}^\infty q^{n-1}\psi(q)^m
  \end{equation}
  diverges, then for almost every pair
  $(\x,\y)\in \operatorname{Mat}_{n\times m}(\RR)\times \RR^m$, there
  exist infinitely many points $(\p,\q)\in P(\pi)$ such that
  \begin{equation}\label{eq:3}
    \abs{\q\x - \p-\y} < \psi(\abs{\q}).
  \end{equation}
  Conversely, if~(\ref{eq:6}) converges, then for almost every
  pair $(\x,\y)\in \operatorname{Mat}_{n\times m}(\RR)\times \RR^m$
  there are only finitely many $(\p,\q)\in P(\pi)$ such
  that~(\ref{eq:3}) holds.
\end{theorem*}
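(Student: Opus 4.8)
The plan is to run a Borel--Cantelli argument in both directions: the convergence half is routine, and the divergence half I would reduce to a quasi-independence (overlap) estimate, which is the real obstacle.

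\smallskip

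\noindent\emph{Convergence.} For $q\ge 1$ let $A_q$ be the set of pairs $(\x,\y)$ for which \eqref{eq:3} holds for some $(\p,\q)$ with $|\q|=q$; the set of pairs with infinitely many solutions lies in $\limsup_q A_q$, and since the constraint $(\p,\q)\in P(\pi)$ only shrinks these sets it can be ignored here. Working inside a unit cube (everything is periodic), fix $\x$ and $\q$: the admissible $\y$ are those within $\psi(q)$ of $\q\x+\ZZ^m$, a set of measure $\ll\psi(q)^m$. Summing over the $\ll q^{n-1}$ vectors with $|\q|=q$ and integrating over $\x$ gives $\Leb(A_q)\ll q^{n-1}\psi(q)^m$, so convergence of \eqref{eq:6} and the first Borel--Cantelli lemma finish this direction.

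\smallskip

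\noindent\emph{Divergence: reduction and the measure sum.} Here $P(\pi)$ is essential. By Fubini it suffices to fix a matrix $\x$ outside a null set --- chosen so that the forms $\q\mapsto\q\x$ are suitably equidistributed modulo $\ZZ^m$ --- and to show that for Lebesgue-almost every $\y\in\RR^m$ there are infinitely many $(\p,\q)\in P(\pi)$ with $|\q\x-\p-\y|<\psi(|\q|)$. For $\q\ne\0$ set
\[
  E_\q \;=\; \bigcup_{\p\,:\,(\p,\q)\in P(\pi)} B\bigl(\q\x-\p,\ \psi(|\q|)\bigr),
\]
so the target is $\limsup_\q E_\q$ (and all measures below are understood inside a fixed cube). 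Whether $(\p,\q)\in P(\pi)$ depends only on $\p$ modulo $D(\q):=\prod_j\rad(g_j)$, where $g_j$ is the gcd of the $\q$-coordinates lying in the block $\pi_j$; this is where $|\pi_j|\ge m+1$ first enters, since it forces every block to contain at least one $\q$-coordinate, so each $g_j$ is defined. Consequently $E_\q$ is a disjoint union of radius-$\psi(|\q|)$ balls whose centres have density $\rho(\q):=\prod_j\prod_{\ell\mid g_j}(1-\ell^{-a_j})$, with $a_j$ the number of $\p$-coordinates in $\pi_j$, so $\Leb(E_\q)\asymp\rho(\q)\,\psi(|\q|)^m$. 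To see $\sum_\q\Leb(E_\q)=\infty$ I would group by $q=|\q|$, use the elementary average $\sum_{|\q|\le Q}\rho(\q)\asymp Q^n$ (where $|\pi_j|\ge m+1$ is used again: for $m\ge 2$ it forces $\rho(\q)\gg 1$ uniformly, and for $m=1$ it reduces matters to $\sum_{q\le Q}\phi(q)/q\asymp Q$), and run a partial-summation argument: since $q\mapsto q^{n-1}\psi(q)^m$ is non-increasing with divergent sum, $\sum_q\psi(q)^m\sum_{|\q|=q}\rho(\q)=\infty$. This is exactly where monotonicity of $\psi$ is needed, and it cannot be dropped --- otherwise $\psi$ could be concentrated on $\q$ with $\rho(\q)$ tiny, a Duffin--Schaeffer-type obstruction.

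\smallskip

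\noindent\emph{Divergence: overlaps and conclusion.} With the measure sum divergent, it remains, by a standard divergence Borel--Cantelli lemma, to prove quasi-independence on average: for $\q\ne\q'$,
\[
  \Leb(E_\q\cap E_{\q'}) \;\ll\; \Leb(E_\q)\,\Leb(E_{\q'})
\]
up to a diagonal term whose sum over $|\q|,|\q'|\le Q$ is $\ll\sum_{|\q|\le Q}\Leb(E_\q)$. To estimate the overlap I would note that a point of $E_\q\cap E_{\q'}$ forces $\q\x-\y$ within $\psi(|\q|)$ of an admissible lattice point and $\q'\x-\y$ within $\psi(|\q'|)$ of one, hence on subtracting that $(\q-\q')\x$ lies within $\psi(|\q|)+\psi(|\q'|)$ of $\ZZ^m$ --- again a system-of-linear-forms condition. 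Integrating out $\y$ first, then $\x$ (using the equidistribution of $\q\x$ and counting admissible centres), reduces the pair sum to estimates organised by $\gcd(\q-\q')$ and by the $g_j$ --- with pairs sharing a large common factor again controlled by the monotonicity of $q^{n-1}\psi(q)^m$ --- of the kind that occur in the classical proof of the Khintchine--Groshev theorem. This would give $\Leb(\limsup_\q E_\q)>0$; a zero--one law for $\limsup$ sets of this shape (or, alternatively, checking that the admissible centres form a ubiquitous system and invoking the ubiquity theorem) then promotes this to full measure, and Fubini completes the proof. The hard part will be this overlap estimate --- showing that imposing $(\p,\q)\in P(\pi)$ does not destroy quasi-independence, uniformly enough to sum over all pairs up to $Q$ --- and it is there, together with the divergence step above, that the hypotheses $|\pi_j|\ge m+1$ and the monotonicity of $q\mapsto q^{n-1}\psi(q)^m$ do their real work.
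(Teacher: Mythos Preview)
The paper does not give a separate proof of the Dani--Laurent--Nogueira theorem; it is quoted from~\cite{DLN}, and the paper instead proves strict strengthenings (Theorems~\ref{cor:monotone} and~\ref{cor:nonmonotone}, via Theorems~\ref{thm:monotone} and~\ref{thm:nonmonotone}) which recover it. So the relevant comparison is between your outline and the paper's route to those stronger results, and the two are genuinely different. You work doubly-metrically: (roughly) fix $\x$, place the randomness in $\y$, and attack the overlap $\Leb(E_\q\cap E_{\q'})$ directly. The paper does the opposite: it fixes $\y$ and works entirely in the $\x$-space $\I^{nm}$, defining sets $A^\pi(\q)\subset A(\q)$. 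It never estimates overlaps for the primitive sets at all. Instead it imports quasi-independence of the \emph{unconstrained} sets $A(\q)$ from~\cite{AR}, and couples this with a uniformity lemma (Lemma~\ref{lem:uniformity}) asserting $\sum_{|\q|=q}|A^\pi(\q)\cap U|\gg \sum_{|\q|=q}|A(\q)||U|$ for every open $U$; Lemmas~\ref{lem:borelcantelli} and~\ref{lem:lebesguedensity} then finish. The payoff is a singly-metric statement (every $\y$) and a much weaker partition hypothesis. Your route, if completed, would presumably reproduce the original doubly-metric DLN result but not these improvements.

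Two concrete issues with your sketch. First, the claim that $|\pi_j|\ge m+1$ with $m\ge 2$ forces $\rho(\q)\gg 1$ uniformly is false: take $m=2$, $n=4$, $\pi_1=\{1,3,4\}$, $\pi_2=\{2,5,6\}$, so each $a_j=|\pi_j\cap[m]|=1$ and $\rho(\q)=\prod_{\ell\mid g_1}(1-\ell^{-1})\prod_{\ell\mid g_2}(1-\ell^{-1})$, which is not bounded below. The \emph{average} $\sum_{|\q|\le Q}\rho(\q)\asymp Q^n$ you need is nonetheless true, but it requires an argument of the type in Lemma~\ref{lem:funny}, not the pointwise bound you assert. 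Second, your Fubini reduction is internally inconsistent: you begin by fixing $\x$ outside a null set and defining $E_\q$ in $\y$-space, but in the overlap step you write ``integrating out $\y$ first, then $\x$''. Either commit to the product space from the outset (so $E_\q\subset\I^{nm}\times\I^m$), or keep $\x$ fixed and estimate overlaps purely in $\y$---but then the subtraction trick $(\q-\q')\x\in\ZZ^m+O(\psi)$ is a condition on the \emph{already fixed} $\x$, not something you can integrate away.
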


\begin{remark*}
  To phrase this in terms of~(\ref{eq:target}), take the balls
  $B_{|\q|}$ to be centered at $\y \in\RR^m$ with radius
  $\psi(\abs{\q})$.
\end{remark*}

\begin{remark*}
  In the case of the trivial partition
  $\pi = \set*{\set{1, 2, \dots, m+n}}$ this theorem is classical. For
  a discussion of the state of the art regarding the classical case,
  we refer the reader to \cite{AR, BV} and references therein. The
  above theorem, and also Theorems \ref{cor:monotone} and
  \ref{cor:nonmonotone} which follow, can be viewed as refinements of
  the classical setting allowing for non-trivial partitions.
\end{remark*}

Regarding the assumptions in this theorem, Dani, Laurent, and Nogueira
ask the following questions~\cite{DLN, L}:

\begin{question}[\cite{DLN}/{\cite[Problem
    1]{L}}] \label{question:singlymetric} Can the result be made
  singly-metric? It is expected that this result should hold for every
  fixed $\y \in \R^m$ and with almost every
  $\x \in \operatorname{Mat}_{n\times m}(\RR)$, as it does in the
  setting without any primitivity constraints, that is, in the
  classical setting of the inhomogeneous Khintchine--Groshev
  theorem (see, for example, \cite{S}).
\end{question}

\begin{question}[{\cite[Problem 2]{L}}] \label{question:partition} Can
  the assumptions on the partition $\pi$ be relaxed? In particular,
  can the assumption that each partition element have size at least
  $m+1$ be removed? Note that each partition element having size at
  least $2$ is a necessary condition, for otherwise $P(\pi)$ would lie
  in a proper hyperplane.
\end{question}

\begin{question}[{\cite[Problem 2]{L}}] \label{question:monotonicity}
  Can the monotonicity condition imposed on the approximating function
  $\psi$ be removed or relaxed? Monotonicity of $\psi$ is needed in the
  inhomogeneous Khintchine--Groshev theorem in the case
  $(m,n)=(1,1)$~\cite{DS}, and it is not needed when
  $nm>2$~\cite{AR}. The cases when $(m,n)= (2,1)$ or $(1,2)$ are open.
\end{question}

\section{Results}

\subsection{Main results}
The following theorem addresses Questions \ref{question:singlymetric}
and \ref{question:partition}. In particular, a singly metric version
of the theorem above due to Dani, Laurent and Nogueira holds without
any assumptions on the partition.

\begin{theorem}\label{cor:monotone}
  Let $m,n\in\NN$ and fix $\y\in\RR^m$. Suppose
  $\pi = \set{\pi_1, \dots, \pi_k}$ is a partition of
  $\set{1, \dots, m+n}$ with $\abs{\pi_j}\geq 2$ for each
  $j = 1,\dots,k$. If $\psi:\NN\to\RR_{\geq 0}$ is non-increasing
  and $\sum q^{n-1}\psi(q)^m$ diverges, then for almost every
  $\x\in \operatorname{Mat}_{n\times m}(\RR)$ there exist infinitely
  many points $(\p,\q)\in P(\pi)$ such that~(\ref{eq:3}) holds.
  
  Conversely, if $\sum q^{n-1}\psi(q)^m$ converges, then for almost
  every $\x\in \operatorname{Mat}_{n\times m}(\RR)$ there are only
  finitely many $(\p,\q)\in P(\pi)$ such that~(\ref{eq:3}) holds.
\end{theorem}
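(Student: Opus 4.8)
The plan is to deduce Theorem~\ref{cor:monotone} from the doubly-metric Dani--Laurent--Nogueira theorem by a transference/slicing argument, combined with a standard reduction that removes the constraint on the partition elements. The convergence case is routine: for each fixed partition $\pi$ the set $P(\pi)$ is a subset of $\ZZ^m\times\ZZ^n$, so $\sum_{\q} \psi(\abs{\q})^m < \infty$ already forces, by the Borel--Cantelli lemma applied fibrewise in $\x$ (for fixed $\y$), that almost every $\x$ has only finitely many solutions; this does not even use the primitivity structure or monotonicity.

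For the divergence case, first I would handle the reduction in Question~\ref{question:partition}. If some partition element $\pi_j$ has $\abs{\pi_j} < m+1$, one passes to a coarser partition $\pi'$ by merging blocks until every block has size $\geq m+1$ (this is always possible since $m+n \geq \abs{\pi_j} \cdot k$ considerations and the total is $m+n$; more carefully, one merges small blocks two at a time). Since $P(\pi') \subseteq P(\pi)$ whenever $\pi'$ refines to $\pi$ --- wait, the inclusion goes the other way: coprimality within larger blocks is a \emph{weaker} constraint, so $P(\pi) \subseteq P(\pi')$. Hence infinitely many solutions in $P(\pi')$ does not immediately give solutions in $P(\pi)$. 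Instead, the correct approach is to \emph{split} blocks: one shows that solutions for the finest admissible partition (all blocks of size exactly $2$, which is the hardest case) can be produced, and every coarser $\pi$ with $\abs{\pi_j}\geq 2$ contains $P(\text{finest})$, so it suffices to prove the theorem for the partition into pairs (and, if $m+n$ is odd, one block of size $3$). This is the genuinely new input beyond \cite{DLN}, since their method requires $\abs{\pi_j}\geq m+1$.

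The core of the argument is the singly-metric upgrade (Question~\ref{question:singlymetric}): to pass from "almost every $(\x,\y)$" to "fixed $\y$, almost every $\x$". The standard tool here is a Fubini/slicing argument in the style of the inhomogeneous Khintchine--Groshev theorem: the doubly-metric statement says the full-measure set $\mathcal{G} \subseteq \operatorname{Mat}_{n\times m}(\RR)\times\RR^m$ of "good" pairs has complement of measure zero, but this alone does not control individual slices $\mathcal{G}_\y$. One instead argues directly via a quantitative divergence-Borel--Cantelli (a "quasi-independence on average" estimate) for the sets
\[
A_q(\y) = \set*{\x : \exists\, (\p,\q)\in P(\pi),\ \abs{\q}=q,\ \abs{\q\x-\p-\y}<\psi(q)},
\]
establishing $\sum_q \Leb(A_q(\y)) = \infty$ (using the divergence of $\sum q^{n-1}\psi(q)^m$ together with a lower bound on the density of $\q$'s satisfying the primitivity constraint --- here monotonicity of $\psi$ enters, as it does classically when $nm\leq 2$) and a second-moment bound $\sum_{q,q'\leq Q}\Leb(A_q \cap A_{q'}) \ll \big(\sum_{q\leq Q}\Leb(A_q)\big)^2$. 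The primitivity constraint affects only the counting of admissible $\q$ (and, for the $\p$'s, a coprimality condition handled by a sieve/Möbius inversion, which contributes a positive constant factor $\prod_p(1-p^{-?})$ and is where the blocks of size $\geq 2$ are essential).

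The main obstacle I anticipate is the second-moment (quasi-independence) estimate with the primitivity constraint in place: the overlaps $\Leb(A_q\cap A_{q'})$ must be controlled uniformly, and the coprimality conditions within partition blocks correlate the $\p$-sums across different $\q$. Controlling this likely requires either a careful sieve argument or, more cleanly, invoking a known inhomogeneous Khintchine--Groshev-type "overlap estimate" from \cite{AR} or \cite{S} and checking that restricting to $P(\pi)$ only multiplies densities by convergent Euler products bounded below. The other delicate point is confirming that the fixed-$\y$ divergence case genuinely holds in the borderline dimensions $(m,n)\in\{(1,1),(2,1),(1,2)\}$ under mere monotonicity --- which is exactly why the hypothesis "$\psi$ non-increasing" is retained in the statement rather than dropped.
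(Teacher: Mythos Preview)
Your proposal eventually lands on the right broad strategy (a direct quasi-independence argument via the divergence Borel--Cantelli lemma), but the opening plan to deduce the result from the doubly-metric Dani--Laurent--Nogueira theorem by transference is not what the paper does, and as you yourself observe, Fubini alone cannot upgrade an almost-every-$(\x,\y)$ statement to a fixed-$\y$ one. The paper never invokes the DLN theorem; it argues directly.

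There are two genuine gaps. First, your Borel--Cantelli argument would yield only that $\limsup_q A_q(\y)$ has \emph{positive} measure; you do not explain how to reach full measure, and there is no ready zero--one law in this setting. The paper's mechanism is to prove the first-moment lower bound not merely for $\sum_{\q}\abs{A^\pi(\q)}$ but for $\sum_{\q}\abs{A^\pi(\q)\cap U}$ uniformly over every open $U\subset\I^{nm}$ (Lemma~\ref{lem:uniformity}); Borel--Cantelli then gives $\abs{\limsup A^\pi(\q)\cap U}\gg \abs{U}^2$ for every $U$, and a Lebesgue-density lemma (Lemma~\ref{lem:lebesguedensity}) forces full measure. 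Second, you flag the second-moment overlap estimate as the main obstacle, but the paper sidesteps it entirely: since $A^\pi(\q)\subset A(\q)$, the overlap bound for the primitive sets is inherited \emph{for free} from the already-established quasi-independence of the unrestricted sets $A(\q)$ in~\cite{AR}. All the real work is in the first moment, namely counting $\p\in P(\pi,\q)$ lying in prescribed sub-boxes of $[0,q]^m$ (Lemma~\ref{lem:counting}, resting on the arithmetic Lemma~\ref{lem:funny}); this is exactly where the partition structure enters, and where monotonicity of $\psi$ is used when no block $\pi_\ell$ of size $\geq 3$ meets $m+[n]$.

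Two smaller points. Your reduction to a finest refinement of $\pi$ into pairs (and possibly triples) is valid, since $P(\pi')\subset P(\pi)$ whenever $\pi'$ refines $\pi$, but the paper does not use it and it would not materially simplify the counting. And for the borderline dimensions $(m,n)\in\{(1,1),(1,2),(2,1)\}$: the constraint $\abs{\pi_j}\geq 2$ forces $\pi$ to be the trivial partition, so these cases reduce to the classical inhomogeneous Khintchine and Khintchine--Groshev theorems of Sz\"usz, Schmidt, and Sprind\v{z}uk, which the paper simply cites.
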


\begin{remark*}
  In fact, we prove a stronger theorem (Theorem~\ref{thm:monotone})
  where $\y$ can depend on $\abs{\q}$, that is, the target balls
  $B_{\abs{\q}}$ do not have to be concentric.
\end{remark*}

The following theorem shows that, further to
Theorem~\ref{cor:monotone}, we can also answer Question
\ref{question:monotonicity} affirmatively in the cases where $nm>2$
(mirroring the current knowledge in the classical setting) if we are
willing to impose a mild assumption on the partition.

\begin{theorem}\label{cor:nonmonotone}
  Let $m,n\in\NN$ be such that $nm>2$ and fix $\y\in\RR^m$. Suppose
  $\pi = \set{\pi_1, \dots, \pi_k}$ is a partition of
  $\set{1, \dots, m+n}$ such that $\abs{\pi_j}\geq 2$ for each
  $j=1,\dots,k$. Furthermore, suppose that there exists some
  $\ell \in \{1,\dots,k\}$ for which $\abs{\pi_\ell}\geq 3$ and
  $\pi_\ell\cap \set{m+1, \dots, m+n}\neq \emptyset$. If
  \mbox{$\psi:\NN\to\RR_{\geq 0}$} is any function such that
  $\sum q^{n-1}\psi(q)^m$ diverges, then for almost every
  $\x\in \operatorname{Mat}_{n\times m}(\RR)$ there exist infinitely
  many points $(\p,\q)\in P(\pi)$ such that~(\ref{eq:3})
  holds. 
  
  Conversely, if $\sum q^{n-1}\psi(q)^m$ converges, then for
  almost every $\x\in \operatorname{Mat}_{n\times m}(\RR)$ there are
  only finitely many $(\p,\q)\in P(\pi)$ such that~(\ref{eq:3}) holds.
\end{theorem}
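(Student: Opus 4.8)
The plan is to deduce Theorem~\ref{cor:nonmonotone} from Theorem~\ref{cor:monotone} by a standard ``slicing off monotonicity'' argument, isolating the non-monotonicity into the one partition element $\pi_\ell$ that we have assumed is large enough and meets the $\q$-directions. The convergence half is immediate: the convergence hypothesis is the same as in Theorem~\ref{cor:monotone} (indeed, the convergence conclusion is monotonicity-free because it is a direct Borel--Cantelli estimate on the measure of the relevant limsup set), so no new work is needed there. For the divergence half, the first step is to replace the arbitrary $\psi$ with a monotonic comparison function. Following the classical trick (used, e.g., in \cite{AR} in the unconstrained setting), I would partition $\NN$ according to the dyadic-type blocks on which $q^{n-1}\psi(q)^m$ behaves regularly, discard the ``bad'' blocks where the contribution is small, and on the remaining blocks pass to a function $\tilde\psi \le \psi$ that is non-increasing and still has $\sum q^{n-1}\tilde\psi(q)^m = \infty$. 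The point of requiring $nm > 2$ is exactly that it gives enough room in this block decomposition for the divergence to survive after monotonization; this is where the hypothesis $nm>2$ is consumed.

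The second step is to handle the extra freedom coming from the large partition element $\pi_\ell$. Since $\abs{\pi_\ell}\ge 3$ and $\pi_\ell$ contains at least one index from $\set{m+1,\dots,m+n}$ — that is, at least one of the $q$-coordinates lies in $\pi_\ell$, together with at least two other coordinates — one can first fix all coordinates outside $\pi_\ell$ to run the monotonic Theorem~\ref{cor:monotone} on the sub-configuration indexed by $\pi_\ell$, and then use the remaining degrees of freedom inside $\pi_\ell$ to recover the lost mass in $\psi$. Concretely, the approximation inequality~(\ref{eq:3}) restricted to the coordinates of $\pi_\ell$ is itself an inhomogeneous linear-forms problem with a primitivity (coprimality) constraint on those coordinates, in ambient dimension $\ge 3$; on that sub-problem one can afford a non-monotone function because a third coordinate provides the extra summation that absorbs the irregularity of $\psi$ — this is the inhomogeneous-linear-forms analogue of the mechanism by which $nm>2$ removes monotonicity in the classical Khintchine--Groshev theorem. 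One then combines: on the $\pi_\ell$-block use the non-monotone argument, on every other block $\pi_j$ ($j\ne\ell$) use the monotonic Theorem~\ref{cor:monotone} with the monotonized $\tilde\psi$, and verify that the product/intersection of the resulting full-measure sets is still full measure and still forces infinitely many $(\p,\q)\in P(\pi)$ satisfying~(\ref{eq:3}) with the original $\psi$.

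The main obstacle I expect is the second step: making the ``split off $\pi_\ell$ and use dimension $\ge 3$ there'' argument rigorous while keeping the coprimality constraint $(\p,\q)\in P(\pi)$ intact across the splitting. The constraints on different partition elements are independent, which helps, but one must be careful that fixing the coordinates outside $\pi_\ell$ to a generic value does not destroy the measure-theoretic genericity needed inside $\pi_\ell$ (a Fubini/slicing argument), and that the monotonization in the first step is compatible with — and does not interfere with — the extra-coordinate trick in $\pi_\ell$. A secondary technical point is checking that the block decomposition genuinely needs only $nm>2$ and not, say, $nm\ge 4$; this requires tracking the exponents $n-1$ and $m$ carefully through the dyadic sums, exactly as in the classical case, and is where most of the routine-but-delicate computation lives.
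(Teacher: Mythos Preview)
Your proposal has two genuine gaps, and the overall strategy is not the one the paper uses.

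\textbf{First gap: the monotonisation step does not exist.} You cannot in general find a non-increasing $\tilde\psi\le\psi$ with $\sum q^{n-1}\tilde\psi(q)^m=\infty$. Take for instance $n=m=2$ and $\psi(q)=1$ when $q$ is a power of $2$, $\psi(q)=0$ otherwise; then $\sum q\psi(q)^2=\sum_k 2^k=\infty$, but any non-increasing $\tilde\psi\le\psi$ vanishes from $q=3$ onward. The removal of monotonicity in Khintchine--Groshev-type theorems (including in~\cite{AR}) is \emph{not} achieved by reducing to the monotone case; it is achieved by proving quasi-independence of the sets $A(\q)$ directly via overlap estimates. The condition $nm>2$ enters those overlap estimates, not a dyadic block decomposition of $\psi$.

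\textbf{Second gap: the problem does not split along $\pi_\ell$.} The inequality~(\ref{eq:3}) reads, coordinatewise, $\abs{q_1x_{1i}+\dots+q_nx_{ni}-p_i-y_i}<\psi(\abs{\q})$ for each $i\in[m]$. Every one of these $m$ inequalities involves \emph{all} of $q_1,\dots,q_n$, and the right-hand side $\psi(\abs{\q})$ depends on the full vector $\q$. So ``restricting to the coordinates of $\pi_\ell$'' does not produce a self-contained lower-dimensional linear-forms problem; the $\q$-variables are global. There is a product structure over partition blocks (see~(\ref{eq:product}) in the paper), but it is a product over the $\p$-coordinates for a \emph{fixed} $\q$, which is not what your Fubini argument needs.

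\textbf{What the paper actually does.} The paper proves Theorem~\ref{cor:nonmonotone} as the concentric case of Theorem~\ref{thm:nonmonotone}, whose proof is direct: quasi-independence on average of the unconstrained sets $A(\q)$ is imported from~\cite{AR}; since $A^\pi(\q)\subset A(\q)$, the same overlap bound holds for $A^\pi$. The new ingredient is Lemma~\ref{lem:uniformity}(a), which shows $\sum_{\abs{\q}=q}\abs{A^\pi(\q)\cap U}\gg \sum_{\abs{\q}=q}\abs{A(\q)}\abs{U}$ for every open $U$ and every large $q$, with no monotonicity assumption. This is where the hypothesis on $\pi_\ell$ is used: via Lemmas~\ref{lem:funny} and~\ref{lem:counting}, the existence of such an $\ell$ guarantees the primitive-point count is $\gg q^{m+n-1}$ rather than $q^{m+n-2}\varphi(q)$, so one does not need Abel summation (and hence no monotonicity of $\abs{B_q}$) to compare the two sums. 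One then feeds this into the divergence Borel--Cantelli lemma and Lemma~\ref{lem:lebesguedensity}. The role of $\pi_\ell$ is thus a counting improvement, not a device for isolating the non-monotonicity into a subproblem.
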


\begin{remark*}
  As with the previous result, this one also follows from a stronger
  statement (Theorem~\ref{thm:nonmonotone}) where the target ball's center can
  move.
\end{remark*}

Next, we turn our attention to the following univariate inhomogeneous
analogue of the Duffin--Schaeffer conjecture for systems of linear
forms. A {\itshape{homogenenous multivariate}} Duffin--Schaeffer
conjecture for systems of linear forms, i.e. where $\psi$ is a
multivariate function depending on $\q$ rather than $\abs{\q}$, and
$\y=0$, was posed in~\cite{BBDV} and has recently been proved by the
second author in \cite{R}. We complement this recent work with the
following {\itshape{inhomogeneous univariate}} statement. Of course,
the univariate case is a special case of the multivariate case. The
novelty of the following statement therefore is the inhomogeneity
which is allowed.

\begin{theorem}[Univariate inhomogeneous Duffin--Schaeffer
  conjecture for systems of linear forms]\label{cor:idsc}
  Let $m,n\in\NN$ with $n>2$ and fix $\y\in\RR^m$. If
  $\psi:\NN\to\RR_{\geq 0}$ is a function such that
  \begin{equation}\label{eq:dscsum}
    \sum_{\q \in \ZZ^n} \parens*{\frac{\varphi(\gcd(\q))\psi(\abs{\q})}{\gcd(\q)}}^m= \infty,
  \end{equation}
  then for almost every $\x\in \operatorname{Mat}_{n\times m}(\RR)$
  there exist infinitely many points $(\p,\q)\in\ZZ^m\times\ZZ^n$ with
  $\gcd(p_i, \q)=1$ for every $i=1, \dots, m$ and such
  that~(\ref{eq:3}) holds. 
\end{theorem}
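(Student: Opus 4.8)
The plan is to deduce the divergence statement from the homogeneous multivariate theorem of \cite{R} by running the quasi-independence (second-moment) argument for the \emph{inhomogeneous} approximation sets and checking that the fixed shift $\y$ does not disturb the underlying overlap estimates; the convergence statement will be a routine application of the convergence Borel--Cantelli lemma. Since both the inequality~\eqref{eq:3} and the constraint $\gcd(p_i,\q)=1$ are unchanged when $\x$ is replaced by $\x+\mathbf{k}$ for an integer matrix $\mathbf{k}$, we may work on the torus $\TT^{nm}=\operatorname{Mat}_{n\times m}(\RR)/\operatorname{Mat}_{n\times m}(\ZZ)$. For $\q\in\ZZ^n\setminus\{\0\}$ put
\[
  E_\q^{(\y)}=\set*{\x\in\TT^{nm}\colon \exists\,\p\in\ZZ^m\text{ with }\gcd(p_i,\q)=1\text{ for all }i\text{ and }\abs{\q\x-\p-\y}<\psi(\abs{\q})},
\]
and write $E_\q^{(0)}$ for the corresponding homogeneous set. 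As distinct $\q$ yield distinct pairs $(\p,\q)$, the theorem amounts to the assertion that $\Leb\bigl(\limsup_{\q\in\ZZ^n}E_\q^{(\y)}\bigr)=1$ when~\eqref{eq:dscsum} diverges, while convergence of~\eqref{eq:dscsum} should force this $\limsup$ set to be null.

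The first step is the measure estimate, and here the inhomogeneity plays no role. Writing $\q=d\q_0$ with $d=\gcd(\q)$ and $\q_0$ primitive, membership of $\x$ in $E_\q^{(\y)}$ depends only on $\q\x\bmod\ZZ^m$; the map $\x\mapsto\q\x$ descends to a surjective continuous homomorphism $\TT^{nm}\to\TT^m$ and therefore pushes normalised Lebesgue measure forward to normalised Lebesgue measure, which is translation invariant. Hence $\Leb(E_\q^{(\y)})=\Leb(E_\q^{(0)})\asymp\min\!\bigl(1,\,(\varphi(d)\psi(\abs{\q})/d)^m\bigr)$, so $\sum_\q\Leb(E_\q^{(\y)})$ converges or diverges together with the series in~\eqref{eq:dscsum}; the convergence half of the theorem follows at once.

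For the divergence half, the divergence Borel--Cantelli lemma reduces the problem to establishing the quasi-independence estimate
\[
  \sum_{\q,\q'}\Leb\bigl(E_\q^{(\y)}\cap E_{\q'}^{(\y)}\bigr)\ll\Bigl(\sum_\q\Leb\bigl(E_\q^{(\y)}\bigr)\Bigr)^2
\]
over the relevant ranges of $\q$ and $\q'$, after which a localisation argument (as in \cite{R}) upgrades the resulting positive measure to full measure. The homogeneous version of this estimate is precisely what \cite{R} provides, applied with the (multivariate, but $\abs{\q}$-dependent) approximating function $\q\mapsto\psi(\abs{\q})$ --- the hypothesis $n>2$ here being inherited from that work. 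Moreover, whenever $\q$ and $\q'$ are linearly independent over $\RR$, the joint map $\x\mapsto(\q\x,\q'\x)$ descends to a \emph{surjective} homomorphism $\TT^{nm}\to\TT^{2m}$, whose pushforward of Lebesgue measure is again translation invariant; since $E_\q^{(\y)}\cap E_{\q'}^{(\y)}$ is the preimage of a $\y$-translate of the set whose preimage is $E_\q^{(0)}\cap E_{\q'}^{(0)}$, one reads off $\Leb(E_\q^{(\y)}\cap E_{\q'}^{(\y)})=\Leb(E_\q^{(0)}\cap E_{\q'}^{(0)})$. Thus the contribution of all linearly independent pairs is identical to the homogeneous one, and matters reduce to the \emph{resonant} pairs, those with $\q'$ a rational multiple of $\q$.

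The resonant pairs are where the inhomogeneity genuinely appears, and I expect this to be the main obstacle. For such a pair, $\q=b\q_1$ and $\q'=a\q_1$ with $\q_1$ primitive and $\gcd(a,b)=1$, and $\Leb(E_\q^{(\y)}\cap E_{\q'}^{(\y)})$ becomes a question about how the two finite sets of points in $\TT^m$ cut out by the respective coprimality conditions --- each now shifted by $\y/b$ and $\y/a$ --- resonate with one another; in contrast to the transverse case, these shifted point sets need not be translates of the homogeneous ones within the pertinent subtorus, so individual overlaps really do change. What has to be verified is that the \emph{sum} of these overlaps remains of the same order as in the homogeneous case, i.e.\ that the shift merely redistributes the resonances and the total is still controlled by the same divisor sums in $a$, $b$ and the entries of $\q_1$ that govern the resonant-pair estimate in \cite{R}. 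The plan is to carry this out by revisiting that estimate with the shift carried along; once it is in place, the remainder of the argument follows \cite{R} without change.
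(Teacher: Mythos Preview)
Your plan takes a genuinely different route from the paper, and the difference is instructive.

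You propose to import the overlap estimates for the \emph{constrained} sets $E_\q^{(0)}$ from~\cite{R} and then argue that the inhomogeneous shift $\y$ leaves them unchanged for linearly independent pairs, while for resonant (parallel) pairs you intend to redo the analysis with the shift carried along. That last step is the declared gap in your proposal, and it is not obviously routine: the resonant-pair estimates in Duffin--Schaeffer type arguments rely on precise arithmetic structure of the coprime residues, and a fixed real shift genuinely alters which pairs of intervals overlap, so one cannot simply quote the homogeneous bound.

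The paper bypasses this difficulty entirely via a simpler observation. Write $A(\q)$ for the \emph{unconstrained} inhomogeneous approximation set (no coprimality on $\p$) and $A'(\q)$ for your $E_\q^{(\y)}$. Since $A'(\q)\subset A(\q)$, one has $\abs{A'(\q)\cap A'(\q')}\leq\abs{A(\q)\cap A(\q')}$ trivially, and quasi-independence of the unconstrained, already-inhomogeneous sets $A(\q)$ is available from~\cite{AR} whenever $nm>2$. No resonant/transverse dichotomy, no reference to~\cite{R}, no special handling of the shift. The only missing ingredient is a \emph{lower} bound: that $\sum_{\abs{\q}=q}\abs{A'(\q)\cap U}\gg\sum_{\abs{\q}=q}\abs{A(\q)}\abs{U}$ for every open $U$. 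This is an equidistribution/counting statement (Lemma~\ref{lem:uniformitydsc}), proved by slicing and applying Lemma~\ref{lem:niederreiter} together with Lemma~\ref{lem:primitiveball} in dimension $n-1$; it is here, not in~\cite{R}, that the hypothesis $n>2$ actually enters. Combining this with Lemmas~\ref{lem:borelcantelli} and~\ref{lem:lebesguedensity} yields full measure directly, with the localisation to arbitrary $U$ built into the uniformity lemma rather than obtained from a separate zero--one law.

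In short: the paper trades your delicate constrained-overlap analysis for one elementary containment plus one uniformity lemma. Your attribution of $n>2$ to~\cite{R} is also off; \cite{R} handles all $m,n$ in the homogeneous multivariate setting, and the restriction here is an artefact of the paper's own method.
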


\begin{remark*}
  Again, this follows from a more general statement
  (Theorem~\ref{thm:idsc}) where $\y$ may vary.
\end{remark*}

We conjecture that the result holds without the condition on $n$.

\begin{conjecture}\label{conj:idsc}
  Theorem~\ref{cor:idsc} also holds when $n \leq 2$. 
\end{conjecture}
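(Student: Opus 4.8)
The plan is to recast the divergence statement of Theorem~\ref{cor:idsc} (now with $n\le 2$) as a $\limsup$ problem on the torus and then prove quasi-independence on average. Since every condition on $\x$ is $\operatorname{Mat}_{n\times m}(\ZZ)$-periodic, I would work on $\TT^{nm}=\operatorname{Mat}_{n\times m}(\RR)/\operatorname{Mat}_{n\times m}(\ZZ)$ and, after a routine truncation, assume $\psi\le 1/2$. For $\q\in\ZZ^n\setminus\set{\0}$, writing $d=\gcd(\q)$ and using $\gcd(p_i,\q)=\gcd(p_i,d)$, set
\begin{equation*}
  A_\q=\set*{\x\in\TT^{nm}\ :\ \exists\,\p\in\ZZ^m,\ \gcd(p_i,d)=1\ (1\le i\le m),\ \abs{\q\x-\p-\y}<\psi(\abs{\q})}.
\end{equation*}
As $\x\mapsto\q\x\bmod1$ pushes Lebesgue measure forward to Lebesgue measure on $\TT^m$, the set $A_\q$ is a product over the $m$ columns of $\x$, and in each column the integer $p_i$ forced by the approximation runs, as $\x$ varies, over a window of $\asymp\abs{\q}$ consecutive values, a proportion $\varphi(d)/d$ of which are coprime to $d$. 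This gives $\Leb(A_\q)\asymp\parens*{\varphi(d)\psi(\abs{\q})/d}^m$, so $\sum_\q\Leb(A_\q)=\infty$ is exactly hypothesis \eqref{eq:dscsum} (and the same holds for the moving-target refinement Theorem~\ref{thm:idsc}, with $\y$ replaced by $\y_{\abs{\q}}$). It then suffices to establish the quasi-independence-on-average bound
\begin{equation}\label{eq:qioa-plan}
  \sum_{\abs{\q},\abs{\q'}\le Q}\Leb(A_\q\cap A_{\q'})\ \ll\ \biggl(\,\sum_{\abs{\q}\le Q}\Leb(A_\q)\biggr)^{2}
\end{equation}
along some sequence $Q\to\infty$, after which a standard zero--one law forces $\limsup_\q A_\q$ to be conull.

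I would then analyze the cross terms by cases. They factor as $\Leb(A_\q\cap A_{\q'})=\prod_{i=1}^m\Leb_{\TT^n}(A_\q^{(i)}\cap A_{\q'}^{(i)})$, and in each column the overlap is $\asymp\Leb_{\TT^n}(A_\q^{(i)})\Leb_{\TT^n}(A_{\q'}^{(i)})$ unless $\q$ and $\q'$ are nearly parallel or $\gcd(d,d')$ is abnormally large; only these resonant pairs threaten \eqref{eq:qioa-plan}. When $m\ge 2$ there is enough slack to handle them. For $n=1$, $m\ge 2$ the resonant contribution is precisely the quantity controlled in the higher-dimensional Duffin--Schaeffer theorem for $\RR^m$: the $m$-fold product raises the offending $\gcd(d,d')$-factor to the power $m\ge 2$, and the Pollington--Vaughan overlap estimate transcribes with only minor changes, the inhomogeneous shift entering solely through the center of the target window and being decoupled from the divisibility of $\p$ modulo $d$. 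For $n=2$, $m\ge 2$ one gains in addition the count $\#\set{\q:\gcd(\q)=d,\ \abs{\q}\asymp r}\ll(r/d)^2$, which together with the $m$-th power makes the sum over $d$ of the near-diagonal terms converge, nearly-parallel pairs being disposed of by an elementary lattice estimate. So I would expect Conjecture~\ref{conj:idsc} to follow from the $n\ge 3$ machinery of this paper together with classical higher-dimensional techniques in every case with $m\ge 2$, leaving the cases $m=1$---namely $(m,n)=(1,1)$ and $(m,n)=(1,2)$---as the genuine difficulty.

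The hard part is exactly this $m=1$ regime. The case $(m,n)=(1,1)$ \emph{is} the inhomogeneous Duffin--Schaeffer conjecture, which is still open: the homogeneous version was settled by Koukoulopoulos and Maynard through the compression method for GCD graphs, but that argument is organized around counting reduced fractions $a/q$ near a given real number, and neither its central combinatorial lemma nor its anatomy-of-integers input is known to accommodate a nonzero shift; producing an inhomogeneous analogue of the GCD-graph overlap estimate---or a transference from the homogeneous statement---is the crux. The case $(m,n)=(1,2)$ is the natural next target: the obvious fibering, fixing $q_2$ and letting $q_1$ vary, reduces it to a one-dimensional inhomogeneous Duffin--Schaeffer problem with a $q_2$-dependent center, i.e.\ essentially the $(1,1)$ obstruction, and the lone extra variable $q_2$ supplies too little decay (a single factor $r/d$) to remove it by Cauchy--Schwarz alone. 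Accordingly, I expect a complete proof of Conjecture~\ref{conj:idsc} to hinge on an inhomogeneous version of the Koukoulopoulos--Maynard estimate; short of that, the cases $m\ge 2$ above, together with partial results for $m=1$ under an extra-divergence hypothesis on $\psi$ (in the style of existing conditional inhomogeneous Duffin--Schaeffer theorems), look attainable by the approach sketched here.
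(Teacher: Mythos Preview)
The statement you are addressing is Conjecture~\ref{conj:idsc}, not a theorem: the paper does not prove it, and indeed explicitly presents it as open. The only discussion the paper offers is that the homogeneous special cases $(m,n)=(1,1)$ and $(m,1)$ with $m\ge 2$ are the Koukoulopoulos--Maynard and Pollington--Vaughan theorems, respectively. There is therefore no ``paper's own proof'' to compare against; what you have written is a research outline for an open problem.

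As such an outline, your assessment of the difficulty is accurate: you correctly isolate $(m,n)=(1,1)$ as the inhomogeneous Duffin--Schaeffer conjecture, which remains open, and you correctly observe that $(m,n)=(1,2)$ fibers into instances of that same obstruction. Your claim that the cases $m\ge 2$, $n\le 2$ should follow by adapting Pollington--Vaughan-type overlap estimates to the inhomogeneous setting is plausible, but it is only a sketch: the assertion that ``the inhomogeneous shift enter[s] solely through the center of the target window and [is] decoupled from the divisibility of $\p$ modulo $d$'' is the substantive point and would need to be verified carefully, since even an inhomogeneous Pollington--Vaughan theorem for $n=1$, $m\ge 2$ does not appear in the literature cited here. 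In short, your proposal is not a proof of the conjecture and does not purport to be one; it is a reasonable map of what is expected to be tractable and what is genuinely hard, consistent with the paper's own framing.
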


In the case when $m=n=1$ and $\y=0$, Conjecture~\ref{conj:idsc} is
exactly the Duffin--Schaeffer conjecture~\cite{DS}, which was proved
in 2020 by Koukoulopoulos and Maynard~\cite{KM}. In 1990, Pollington
and Vaughan~\cite{PV} proved Conjecture~\ref{conj:idsc} in the cases $(m,1)$
with $m\geq 2$ and $\y = 0$, thus verifying a higher-dimensional
simultaneous version of the classical Duffin--Schaeffer Conjecture as
postulated by Sprind\v{z}uk~\cite{S}. 

\subsection{Hausdorff measure statements}

In Diophantine Approximation, in addition to considering Lebesgue
measure, one is often interested in studying the Hausdorff measure and
dimension of sets. This is particularly pertinent for sets which have
zero Lebesgue measure as Hausdorff measures and dimensions can often
provide a means for distinguishing such sets. For example, to observe
this phenomenon, one can compare the classical works of Khintchine
\cite{KT}, Jarn\'{\i}k \cite{J1,J2}, and Besicovitch \cite{B}. For
definitions of Hausdorff measures and dimension, we refer the reader
to \cite{F}.

Below we record Hausdorff measure analogues of Theorems
\ref{cor:monotone}, \ref{cor:nonmonotone}, and \ref{cor:idsc}. For the
Hausdorff measure analogues of Theorems \ref{cor:monotone} and
\ref{cor:nonmonotone}, the statements we give follow immediately from
\cite[Theorem 7]{AB}, which itself is deduced from the mass
transference principle for systems of linear forms proved in
\cite[Theorem 1]{AB}. Given a function $\psi: \N \to \R_{\geq 0}$, a
partition $\pi = \{\pi_1,\pi_2,\dots,\pi_k\}$ of $[m+n]$ with
$|\pi_j| \geq 2$ for each $j=1,\dots,k$, fixed $\Phi \in \I^{mm}$,
and $\y \in \I^m$, define $\cM_{n,m}^{\pi, \y, \Phi}(\psi)$ to be the
set of $\x \in \I^{nm}$ such that
\[|\q\x-\p\Phi-\y|<\psi(|\q|)\] holds for $(\p,\q) \in P(\pi)$ with
arbitrarily large $|\q|$. When $\Phi$ is the $m \times m$ identity
matrix, we will omit the superscript $\Phi$ and simply write
$\cM_{n,m}^{\pi, \y}(\psi)$.

Throughout, we define a {\itshape dimension function} to be a
continuous function $f: \R_{\geq 0} \to \R_{\geq 0}$ with $f(r) \to 0$
as $r \to 0$. For a subset $X \subset \I^{nm}$, we denote by $|X|$ its
Lebesgue measure and by $\cH^f(X)$ its Hausdorff $f$-measure.

\begin{theorem*}[\cite{AB}]
  Let $\psi: \N \to \R_{\geq 0}$ be such that
  $\frac{\psi(q)}{q} \to 0$ as $q \to \infty$. Let
  $\pi = \{\pi_1,\pi_2,\dots,\pi_k\}$ be a partition of $[m+n]$ with
  $|\pi_j| \geq 2$ for each $j=1, \dots, k$ and let $\Phi \in \I^{mm}$
  and $\y \in \I^m$ be fixed. Let $f: \R_{\geq 0} \to \R_{\geq 0}$ be
  a dimension function such that $r^{-nm}f(r)$ is monotonic and
  $g: r \mapsto g(r) = r^{-m(n-1)}f(r)$ is also a dimension
  function. Define $\theta: \N \to \R_{\geq 0}$ by
\[\theta(q) = qg\left(\frac{\psi(q)}{q}\right)^{\frac{1}{m}}.\]
Then
\[\left|\cM_{n,m}^{\pi, \y, \Phi}(\theta)\right|=1 \qquad \text{implies} \qquad \cH^f\left(\cM_{n,m}^{\pi, \y, \Phi}(\psi)\right) = \cH^f(\I^{nm}).\]
\end{theorem*}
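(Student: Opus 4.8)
The plan is to derive this from the mass transference principle for systems of linear forms established in \cite[Theorem 1]{AB}, in the packaged form \cite[Theorem 7]{AB}; the only real work is to present $\cM_{n,m}^{\pi,\y,\Phi}(\psi)$ as a $\limsup$ set of neighbourhoods of affine subspaces and to recognise that the rescaled $\limsup$ set featured in that principle is exactly $\cM_{n,m}^{\pi,\y,\Phi}(\theta)$.

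First I would fix the resonant sets. For $(\p,\q)\in P(\pi)$ with $\q\neq\0$ the linear map $\x\mapsto\q\x$ from $\operatorname{Mat}_{n\times m}(\R)\cong\R^{nm}$ to $\R^m$ is surjective, so
\[
  R_{\p,\q}\;:=\;\set{\x\in\I^{nm}:\q\x=\p\Phi+\y}
\]
is an affine subspace of $\R^{nm}$ of dimension $d:=m(n-1)$, hence of codimension $\delta:=nm-d=m$. The inequality $\abs{\q\x-\p\Phi-\y}<\psi(\abs{\q})$ is the preimage under $\x\mapsto\q\x$ of a cube of side $2\psi(\abs{\q})$, and since the normalised maps $\x\mapsto(\q/\abs{\q})\,\x$ form a compact family of surjections onto $\R^m$, the set it defines is comparable, with absolute implied constants, to the Euclidean $\psi(\abs{\q})/\abs{\q}$‑neighbourhood $\Delta\!\left(R_{\p,\q},\psi(\abs{\q})/\abs{\q}\right)$ inside $\I^{nm}$. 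The finitely many terms per level with $\q=\0$ are independent of $\x$ and irrelevant to the condition ``for arbitrarily large $\abs{\q}$''. Hence, up to this harmless comparability,
\[
  \cM_{n,m}^{\pi,\y,\Phi}(\psi)\;=\;\limsup_{\substack{(\p,\q)\in P(\pi)\\ \abs{\q}\to\infty}}\ \Delta\!\left(R_{\p,\q},\ \frac{\psi(\abs{\q})}{\abs{\q}}\right),
\]
and the assumption $\psi(q)/q\to0$ guarantees that these target neighbourhoods genuinely shrink.

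Next I would invoke the mass transference principle with ambient dimension $nm$, resonant‑set dimension $d=m(n-1)$ and codimension $\delta=m$. That principle upgrades a full‑Lebesgue‑measure statement to a full‑$\cH^f$‑measure statement by replacing each neighbourhood radius $\rho$ by $g(\rho)^{1/\delta}$, where $g(r)=r^{-d}f(r)=r^{-m(n-1)}f(r)$, under precisely the standing hypotheses: $r^{-nm}f(r)$ monotonic and $g$ itself a dimension function (so $g(\rho)\to0$ and the rescaled targets shrink as well). Here the rescaled radius attached to $(\p,\q)$ is $g\!\left(\psi(\abs{\q})/\abs{\q}\right)^{1/m}$, which back in the formulation $\abs{\q\x-\p\Phi-\y}<(\,\cdot\,)$ corresponds to replacing $\psi(\abs{\q})$ by
\[
  \abs{\q}\cdot g\!\left(\frac{\psi(\abs{\q})}{\abs{\q}}\right)^{1/m}\;=\;\theta(\abs{\q}).
\]
Thus the rescaled $\limsup$ set is $\cM_{n,m}^{\pi,\y,\Phi}(\theta)$, and \cite[Theorem 7]{AB} — equivalently \cite[Theorem 1]{AB} applied to the family $\set{R_{\p,\q}:(\p,\q)\in P(\pi)}$ — yields that $\abs{\cM_{n,m}^{\pi,\y,\Phi}(\theta)}=1$ implies $\cH^f\!\left(\cM_{n,m}^{\pi,\y,\Phi}(\psi)\right)=\cH^f(\I^{nm})$, as claimed. (The Lebesgue case $f(r)=r^{nm}$, where $g(r)=r^m$, $\theta=\psi$, and the statement is trivial, is a useful consistency check.)

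The only point genuinely requiring care is the first step: reconciling the anisotropic ``slab'' cut out by the maximum‑norm inequality with the isotropic Euclidean neighbourhoods used by the abstract statement. This is exactly the generality for which \cite[Theorem 1]{AB} is built, so it amounts to bookkeeping (a two‑sided sandwich of the slab $\limsup$ set between $\Delta(R_{\p,\q},c\,\rho_{\p,\q})$‑$\limsup$ sets for absolute constants $c$) rather than a real obstacle. Beyond that one only checks that $\set{R_{\p,\q}}$ and $f$ satisfy the hypotheses of \cite[Theorem 7]{AB} verbatim — a countable family of affine (not necessarily rational) subspaces of common dimension, monotonicity of $r^{-nm}f(r)$, and $g$ a dimension function — no structural or ubiquity condition on the family being needed because the full‑measure input is handed in directly. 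Were one instead to prove \cite[Theorem 1]{AB} from scratch, the substance would be the standard mass transference machinery: a Cantor‑type subset of the full‑measure rescaled $\limsup$ set carrying a mass distribution whose local scaling is governed by $f$; here that input is imported wholesale.
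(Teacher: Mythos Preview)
Your proposal is correct and matches the paper's own treatment: the paper does not prove this theorem but simply cites it as \cite[Theorem~7]{AB}, noting that it is deduced from the mass transference principle for systems of linear forms \cite[Theorem~1]{AB}. Your sketch fills in exactly the bookkeeping the paper omits---writing $\cM_{n,m}^{\pi,\y,\Phi}(\psi)$ as a $\limsup$ of $\psi(\abs{\q})/\abs{\q}$-neighbourhoods of the codimension-$m$ affine planes $R_{\p,\q}$ and identifying the rescaled set as $\cM_{n,m}^{\pi,\y,\Phi}(\theta)$---so there is nothing to add.
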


Applying the above theorem gives rise to the following Hausdorff measure versions of Theorems \ref{cor:monotone} and \ref{cor:nonmonotone}. We note that we may assume without loss of generality that $\frac{\psi(q)}{q} \to 0$ as $q \to \infty$ in Theorems \ref{cor:monotone}, \ref{cor:nonmonotone}, and \ref{cor:idsc}. Hence the appearance of this condition in the statements below is not restrictive.

\begin{theorem}[Hausdorff measure version of Theorem
  \ref{cor:monotone}] \label{Hausdorff monotone} Let $m,n \in \N$ and
  fix $\y \in \R^m$. Let $\pi = \{\pi_1,\pi_2,\dots,\pi_k\}$ be a
  partition of $[m+n]$ with $|\pi_j| \geq 2$ for each $j=1,\dots,k$
  and suppose that $\psi: \N \to \R_{\geq 0}$ is non-increasing (in
  particular, note that this means that $\frac{\psi(q)}{q} \to 0$ as
  $q \to \infty$). Let $f: \R_{\geq 0} \to \R_{\geq 0}$ be a dimension
  function such that $r^{-nm}f(r)$ is monotonic and
  $g: r \mapsto g(r) = r^{-m(n-1)}f(r)$ is also a dimension
  function. If
\[\sum_{q=1}^{\infty}{q^{n+m-1}g\left(\frac{\psi(q)}{q}\right)} = \infty, \]
then
\[\cH^f\left(\cM_{n,m}^{\pi, \y}(\psi)\right) = \cH^f(\I^{nm}).\]
\end{theorem}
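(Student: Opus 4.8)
The plan is to deduce Theorem~\ref{Hausdorff monotone} from the Mass Transference Principle statement of~\cite{AB} quoted just above, exactly as advertised in the surrounding text, using Theorem~\ref{cor:monotone} to verify the hypothesis of that principle. First I would set $\theta(q) = q\,g(\psi(q)/q)^{1/m}$ as in the quoted theorem, so that the target conclusion $\cH^f(\cM_{n,m}^{\pi,\y}(\psi)) = \cH^f(\I^{nm})$ follows once we know $\abs{\cM_{n,m}^{\pi,\y}(\theta)} = 1$. The heart of the argument is therefore to check that $\theta$ satisfies the divergence hypothesis of Theorem~\ref{cor:monotone}, namely that $\sum_q q^{n-1}\theta(q)^m$ diverges, and that $\theta$ is (or may be taken to be) non-increasing; granting this, Theorem~\ref{cor:monotone} (applied with the identity matrix $\Phi$, which is the omitted-superscript case) gives full Lebesgue measure for $\cM_{n,m}^{\pi,\y}(\theta)$, and then the MTP closes the argument.

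For the divergence computation, note that $q^{n-1}\theta(q)^m = q^{n-1}\cdot q^m\, g(\psi(q)/q) = q^{n+m-1}g(\psi(q)/q)$, so the series $\sum_q q^{n-1}\theta(q)^m$ is literally the series $\sum_q q^{n+m-1}g(\psi(q)/q)$ appearing in the hypothesis of Theorem~\ref{Hausdorff monotone}. Thus divergence is exactly what we have assumed, and no estimation is needed here at all. It remains only to address monotonicity of $\theta$: since $\psi$ is non-increasing and $g$ is a dimension function (so in particular $g(r)\to 0$ as $r\to 0$ and, because $r^{-m(n-1)}f(r) = g(r)$ with $r^{-nm}f(r) = r^{-m}g(r)$ monotonic, $g$ has controlled growth), one argues that $q \mapsto \theta(q)$ can be replaced by a non-increasing function without affecting the divergence of $\sum q^{n+m-1}g(\psi(q)/q)$; alternatively, one observes that in the $\cH^f = \cH^f(\I^{nm})$ regime it suffices to apply the divergence part of Theorem~\ref{cor:monotone} to $\min(\theta,\tilde\theta)$ for a suitable monotone minorant $\tilde\theta$ with the same divergent series, which is a standard maneuver. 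Either way the monotonicity hypothesis of Theorem~\ref{cor:monotone} is met.

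Finally I would remark on the regularity hypotheses carried over from~\cite{AB}: the conditions that $r^{-nm}f(r)$ be monotonic and that $g(r) = r^{-m(n-1)}f(r)$ be a dimension function are precisely what make the quoted MTP applicable, and they are assumed verbatim in the statement of Theorem~\ref{Hausdorff monotone}, so nothing further is required. The convergence counterpart is not asserted in this theorem (only the full-measure statement), so there is no second half to prove. The main obstacle, such as it is, is the bookkeeping around monotonicity of $\theta$ versus $\psi$ and making sure the divergence of the auxiliary series transfers correctly; the algebraic identity $q^{n-1}\theta(q)^m = q^{n+m-1}g(\psi(q)/q)$ makes the rest essentially immediate. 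In short, the proof is: apply the MTP of~\cite{AB}; its Lebesgue hypothesis is supplied by Theorem~\ref{cor:monotone} applied to $\theta$; and the divergence needed by Theorem~\ref{cor:monotone} is the hypothesis we assumed, via the displayed identity.
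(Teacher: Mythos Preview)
Your proposal is correct and follows exactly the approach the paper takes: the paper simply states that Theorems~\ref{Hausdorff monotone} and~\ref{Hausdorff nonmonotone} follow by ``applying the above theorem'' (the MTP from~\cite{AB}) together with the Lebesgue statements, and your write-up spells out precisely this derivation, including the key identity $q^{n-1}\theta(q)^m = q^{n+m-1}g(\psi(q)/q)$. If anything, you are more careful than the paper in flagging the monotonicity of $\theta$ as something to be checked before invoking Theorem~\ref{cor:monotone}.
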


\begin{theorem}[Hausdorff measure version of Theorem
  \ref{cor:nonmonotone}] \label{Hausdorff nonmonotone} Let
  $m,n \in \N$ and fix $\y \in \R^m$. Let
  $\pi = \{\pi_1,\pi_2,\dots,\pi_k\}$ be a partition of $[m+n]$ with
  $|\pi_j| \geq 2$ for each $j=1,\dots,k$ and further suppose that
  there exists some $\ell \in \{1,\dots,k\}$ such that
  $|\pi_{\ell}| \geq 3$ and
  $\pi_{\ell} \cap \{m,m+1,\dots,m+n\} \neq \emptyset$. Suppose that
  $\psi: \N \to \R_{\geq 0}$ is such that $\frac{\psi(q)}{q} \to 0$ as
  $q \to \infty$ and suppose $f: \R_{\geq 0} \to \R_{\geq 0}$ is a
  dimension function such that $r^{-nm}f(r)$ is monotonic and
  $g: r \mapsto g(r) = r^{-m(n-1)}f(r)$ is also a dimension
  function. If
\[\sum_{q=1}^{\infty}{q^{n+m-1}g\left(\frac{\psi(q)}{q}\right)} = \infty, \]
then
\[\cH^f\left(\cM_{n,m}^{\pi, \y}(\psi)\right) = \cH^f(\I^{nm}).\]
\end{theorem}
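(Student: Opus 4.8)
The plan is to deduce Theorem~\ref{Hausdorff nonmonotone} directly from the mass transference principle of \cite{AB} recorded above, together with the Lebesgue-measure statement Theorem~\ref{cor:nonmonotone}; this follows the familiar pattern by which Jarn\'{\i}k-type theorems are obtained from Khintchine--Groshev-type theorems. First I would set
\[
  \theta(q) \;=\; q\, g\!\left(\frac{\psi(q)}{q}\right)^{1/m},
\]
which is exactly the auxiliary function appearing in the quoted theorem of \cite{AB}. Applying that theorem with $\Phi$ equal to the $m \times m$ identity matrix reduces the desired conclusion $\cH^f\bigl(\cM_{n,m}^{\pi,\y}(\psi)\bigr) = \cH^f(\I^{nm})$ to the single assertion $\bigl|\cM_{n,m}^{\pi,\y}(\theta)\bigr| = 1$. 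Before invoking \cite{AB} I would check its standing hypothesis $\theta(q)/q \to 0$: since $\theta(q)/q = g(\psi(q)/q)^{1/m}$, the assumption $\psi(q)/q \to 0$ together with the fact that $g$ is a dimension function (so $g(r) \to 0$ as $r \to 0$) gives this at once.

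The remaining step is to establish $\bigl|\cM_{n,m}^{\pi,\y}(\theta)\bigr| = 1$, and this is precisely the divergence half of Theorem~\ref{cor:nonmonotone} applied with $\theta$ in place of $\psi$. The partition $\pi$ is unchanged --- it still has a component $\pi_\ell$ with $|\pi_\ell| \ge 3$ meeting $\{m+1,\dots,m+n\}$ --- so the only thing left to verify is the divergence of the relevant series, and here the shape of $\theta$ does the work: $\theta(q)^m = q^m g(\psi(q)/q)$, and hence
\[
  \sum_{q=1}^{\infty} q^{n-1}\theta(q)^m \;=\; \sum_{q=1}^{\infty} q^{n+m-1}\, g\!\left(\frac{\psi(q)}{q}\right) \;=\; \infty
\]
by hypothesis. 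Theorem~\ref{cor:nonmonotone} then yields that for almost every $\x$ there are infinitely many $(\p,\q) \in P(\pi)$ with $|\q\x - \p - \y| < \theta(|\q|)$, i.e.\ almost every $\x$ lies in $\cM_{n,m}^{\pi,\y}(\theta)$, which is what is needed; substituting this back into the quoted theorem of \cite{AB} finishes the argument. (Theorem~\ref{Hausdorff monotone} would be proved in the identical way, using the monotone Theorem~\ref{cor:monotone} in place of Theorem~\ref{cor:nonmonotone}, noting that a non-increasing $\psi$ automatically satisfies $\psi(q)/q \to 0$ once the trivial regime in which $\psi(q)$ stays comparable to $q$ is dealt with separately.)

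Because both imported ingredients --- the mass transference principle and the Lebesgue-measure theorem --- are available off the shelf, I expect this deduction to be essentially bookkeeping, with no serious obstacle; the one point I would be careful about is the compatibility of the side conditions of the two cited results. On the \cite{AB} side that is the requirement $\theta(q)/q \to 0$, handled above; on the Theorem~\ref{cor:nonmonotone} side it is the partition condition, inherited verbatim, together with the dimensional restriction $nm > 2$ underlying that theorem. Since $|\pi_\ell| \ge 3$ already forces $m+n \ge 3$, the only configurations falling outside $nm > 2$ are $(m,n) \in \{(1,2),(2,1)\}$ with $\pi$ the trivial one-block partition, for which $P(\pi) = \{(\p,\q) : \gcd(\p,\q) = 1\}$ and the required non-monotone Lebesgue input coincides with a currently open case of the inhomogeneous Khintchine--Groshev theorem (cf.\ Question~\ref{question:monotonicity}); in those cases one either carries the same caveat or simply assumes $nm > 2$. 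The genuinely hard analysis has already been done in \cite{AB} and in the proof of Theorem~\ref{cor:nonmonotone}, which is why the step here is short.
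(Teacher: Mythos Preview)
Your proposal is correct and follows exactly the approach the paper takes: the paper does not give a separate proof but simply states that Theorems~\ref{Hausdorff monotone} and~\ref{Hausdorff nonmonotone} arise by applying the quoted mass transference principle from~\cite{AB} with the Lebesgue input coming from Theorems~\ref{cor:monotone} and~\ref{cor:nonmonotone} respectively. Your careful remark about the cases $(m,n)\in\{(1,2),(2,1)\}$ with the trivial partition is a genuine caveat that the paper glosses over; since Theorem~\ref{cor:nonmonotone} carries the hypothesis $nm>2$, the Hausdorff statement as written inherits that restriction (or the open status noted in Question~\ref{question:monotonicity}) in those edge cases.
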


\begin{remark*}
Theorems \ref{cor:monotone} and \ref{cor:nonmonotone} further refine some similar statements proved in \cite{AB}, see Theorems 8--10 therein.
\end{remark*}

Given a function $\psi: \N \to \R_{\geq 0}$ and a fixed $\y \in \R^m$, let us now denote by $\cA_{n,m}^{\y}(\psi)$ the set of points $\x \in \I^{nm}$ for which 
\[|\q \x-\p-\y| < \psi(|\q|)\]
for infinitely many pairs of vectors $(\p,\q) \in \Z^m \times \Z^n \setminus \{\0\}$ with $\gcd(p_i,\q)=1$ for every $1 \leq i \leq m$.  
By modifying arguments contained in \cite[Section 2]{AB}, combining \cite[Theorem 1]{AB} with Theorem \ref{cor:idsc}, it is possible to obtain the following inhomogeneous univariate Hausdorff measure statement.

\begin{theorem}[Hausdorff measure version of Theorem \ref{cor:idsc}] \label{Hausdorff IDSC}
Suppose $\psi: \N \to \R_{\geq 0}$ is any function such that $\frac{\psi(q)}{q} \to 0$ as $q \to \infty$ and suppose $\y \in \R^m$ is fixed. Let $f: \R_{\geq 0} \to \R_{\geq 0}$ be a dimension function such that $r^{-nm}f(r)$ is monotonic and $g: r \mapsto g(r) = r^{-m(n-1)}f(r)$ is also a dimension function. If
\begin{equation}
    \sum_{\q \in \ZZ^n \setminus \{0\}} \parens*{\frac{\varphi(\gcd(\q))}{\gcd(\q)}\abs{\q}}^mg\parens*{\frac{\psi(|\q|)}{|\q|}}= \infty,
  \end{equation}
then
\[\cH^f(\cA_{n,m}^{\y}(\psi)) = \cH^f(\I^{nm}).\]
\end{theorem}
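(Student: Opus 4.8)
The plan is to derive Theorem~\ref{Hausdorff IDSC} from Theorem~\ref{cor:idsc} via the Mass Transference Principle for systems of linear forms (\cite[Theorem 1]{AB}), exactly paralleling how the authors say Theorems~\ref{Hausdorff monotone}--\ref{Hausdorff nonmonotone} follow from \cite[Theorem 7]{AB}, but keeping track of the extra coprimality conditions $\gcd(p_i,\q)=1$ built into the set $\cA_{n,m}^{\y}(\psi)$. First I would set up the covering/limsup structure: for each $\q \in \ZZ^n\setminus\{\0\}$ with $\gcd(\q)=d$, the set of $\x$ satisfying $|\q\x - \p - \y| < \psi(|\q|)$ with $\gcd(p_i,\q)=1$ for all $i$ is a union of rectangular neighbourhoods (products over the $m$ columns) of the affine subspaces $\{\q\x = \p + \y\}$, indexed by admissible $\p$; the number of admissible residues $\p \pmod{\q}$ in each coordinate is $\varphi(d)$, which is precisely what produces the weight $(\varphi(\gcd(\q))/\gcd(\q))^m$ in the sum. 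Thus $\cA_{n,m}^{\y}(\psi)$ is the limsup set associated to this family of rectangles of sidelength $\psi(|\q|)/|\q|$ (after normalising by $|\q|$, the size of the gradient), contained in the $|\q|^{-1}$-separated grid of translates; this is exactly the shape of limsup set to which the linear-forms mass transference principle applies.

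Next I would verify the hypothesis of \cite[Theorem 1]{AB}: one needs the \emph{full-measure} statement for the dilated function $\theta(q) = q\,g(\psi(q)/q)^{1/m}$ (equivalently, $\psi$ replaced by the function whose associated rectangles are enlarged in the codimension-one directions according to $g$), namely that $\Leb\bigl(\cA_{n,m}^{\y}(\theta)\bigr)=1$. This is supplied by Theorem~\ref{cor:idsc} applied to $\theta$ in place of $\psi$: its divergence hypothesis $\sum_{\q}(\varphi(\gcd(\q))\theta(|\q|)/\gcd(\q))^m = \infty$ is, after substituting $\theta(q)=q\,g(\psi(q)/q)^{1/m}$, exactly the series $\sum_{\q}(\varphi(\gcd(\q))|\q|/\gcd(\q))^m g(\psi(|\q|)/|\q|)=\infty$ assumed in Theorem~\ref{Hausdorff IDSC}. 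Here one uses $n>2$, which is in force because Theorem~\ref{cor:idsc} requires it, so the argument only establishes Theorem~\ref{Hausdorff IDSC} in that range (consistent with Conjecture~\ref{conj:idsc}); I would note this restriction explicitly. One also needs $\psi(q)/q \to 0$ so that $g(\psi(q)/q)$ makes sense (as $g$ is a dimension function) and so that the rectangles shrink — this is assumed.

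The main obstacle, and the part requiring genuine care rather than citation, is checking that the coprimality-restricted limsup set still fits the formalism of \cite[Theorem 1]{AB}: that theorem is stated for limsup sets built from a prescribed collection of ``resonant'' affine subspaces with prescribed approximating radii, and one must confirm that restricting the numerators $\p$ to those with $\gcd(p_i,\q)=1$ simply amounts to selecting a sub-collection of the resonant hyperplanes — which it does, since removing hyperplanes only shrinks the limsup set and the MTP hypothesis is a full-measure statement about the \emph{same} sub-collection (scaled up). Concretely, I would define, for each $\q$, the index set of admissible $\p$, observe it is a union of $\varphi(\gcd(\q))^m$ congruence classes modulo $\q$, and feed precisely these hyperplanes (with radius $\psi(|\q|)$, resp.\ $\theta(|\q|)$ for the MTP input) into \cite[Theorem 1]{AB}; the local ubiquity / full-measure input is then verbatim Theorem~\ref{cor:idsc} for $\theta$. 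The conclusion $\cH^f\bigl(\cA_{n,m}^{\y}(\psi)\bigr) = \cH^f(\I^{nm})$ is then immediate from the MTP. The remaining bookkeeping — that $g$ being a dimension function and $r^{-nm}f(r)$ monotonic are exactly the regularity hypotheses demanded by \cite[Theorem 1]{AB}, and that the homogeneous/inhomogeneous shift $\y$ does not affect any of this since translation is measure-preserving — is routine and I would only indicate it briefly, referring to \cite[Section 2]{AB} for the modifications, as the authors do.
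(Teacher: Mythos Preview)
Your proposal is correct and follows precisely the route the paper indicates: the paper does not give a detailed proof but states that Theorem~\ref{Hausdorff IDSC} is obtained ``by modifying arguments contained in \cite[Section 2]{AB}, combining \cite[Theorem 1]{AB} with Theorem~\ref{cor:idsc},'' which is exactly your plan of feeding the coprimality-restricted family of resonant hyperplanes into the linear-forms mass transference principle with the full-measure input supplied by Theorem~\ref{cor:idsc} applied to the dilated function~$\theta$. Your identification of the main point requiring care---that restricting to numerators with $\gcd(p_i,\q)=1$ simply selects a sub-collection of resonant sets compatible with the MTP formalism---and your explicit flagging of the $n>2$ restriction inherited from Theorem~\ref{cor:idsc} are both appropriate.
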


\begin{remark*}
  The complementary convergence statements corresponding to Theorems
  \ref{Hausdorff monotone}, \ref{Hausdorff nonmonotone},
  and~\ref{Hausdorff IDSC} can all be proved via standard covering
  arguments. Moreover, in the convergence cases, no monotonicity
  assumptions are required.
\end{remark*}

\section{Partition reduction}\label{sec:partition-reduction}

Fix $m,n\in\NN$. We are concerned with partitions of
$[m+n] := \set{1, 2, \dots, m, m+1, \dots, m+n}$. For example, $[m+n]$
itself can be regarded as the trivial partition, more correctly
written as $\set{[m+n]}$. Another important partition is
$\set{[m], m + [n]}$, where
$m + [n] = m + \set{1, \dots, n} = \set{m+1, \dots, m + n}$.

Let $\pi = (\pi_1, \dots, \pi_k)$ be a partition of $[m + n]$ such
that $\abs{\pi_j} \geq 2$ for each $j=1, \dots, k$.  By re-ordering
the partition components if necessary, we may suppose that there exist
$0\leq a \leq b \leq k$ such that
\begin{itemize}
\item For each $j \in [1,a]\cap \Z$ we have $\pi_j\cap [m]\neq \emptyset$ and
  $\pi_j\cap m+[n]\neq \emptyset$.
\item For each $j\in (a, b] \cap \Z$ we have $\pi_j \subset [m]$.
\item For each $j\in (b, k]\cap \Z$ we have $\pi_j \subset m + [n]$.
\end{itemize}
For each $j = 1, \dots, k$ it is convenient to abuse notation and let
$\pi_j$ also denote the projection of $\ZZ^{m+n}$ onto the coordinates
corresponding to $\pi_j$, regarded as a vector in
$\ZZ^{\abs{\pi_j}}$. (The context will disambiguate.)  For example, if
we use $\pi_1 = [m]$ and $\pi_2 = m + [n]$, and elements of
$\ZZ^{m + n}$ are written $(\p, \q)$ with $\p \in \ZZ^m$ and
$\q \in \ZZ^n$, then $\pi_1(\p,\q) = \p$ and $\pi_2(\p,\q) = \q$.

Indeed, let us keep the convention of writing $(\p,\q)$ for elements
of $\ZZ^{m+n} = \ZZ^m\times\ZZ^n$. Then
\begin{equation*}
  P(\pi) = \set*{(\p,\q)\in\ZZ^{m+n} : \text{ for each } j=1,\dots, k, \quad \gcd(\pi_j(\p,\q)) = 1}.
\end{equation*}
Note that $P(\set{[m+n]})$ is the set of primitive points in
$\ZZ^{m+n}$. For $\q \in \Z^n$, let
\begin{equation*}
  P(\pi,\q) = \set*{\p \in\ZZ^m : (\p,\q)\in P(\pi)}.
\end{equation*}
For example, $P(\set{[m+n]}, \q) = \ZZ^{m+n}$ if $\gcd(\q)=1$, but in
general $P(\set{[m+n]}, \q)$ is a proper subset of $\ZZ^{m+n}$. Finally, let
\begin{equation*}
  Q(\pi) = \set{\q \in \ZZ^n : P(\pi, \q)\neq \emptyset}.
\end{equation*}
These are the vectors $\q$ such that for all $j\in(b,k]\cap \Z$,
$\gcd(\pi_j(\0, \q)) = 1$, with $\0$ denoting the $m$-dimensional
$0$-vector.

\section{Arithmetic lemmas}
\label{sec:arithmetic}

In this section we collect some useful arithmetic sums, many of which
are well-known. The main purpose of this section is to prove
Lemma~\ref{lem:funny}, a sum which is crucial later in the proof of
Lemma~\ref{lem:counting}.

We will often use the Vinogradov symbols. Suppose that
$f: \R_{\geq 0} \to \R_{> 0}$ and $g: \R_{\geq 0} \to \R_{>0}$ are
functions. We say that $f \ll g$ if there exists a constant $C>0$ such
that $f(x) \leq Cg(x)$ for all $x \in \R_{\geq 0}$. If $f \ll g$ and
$g \ll f$, we write $f \asymp g$ and say that $f$ and $g$ are
{\itshape comparable}. We write $f \sim g$ if
$\frac{f(x)}{g(x)} \to 1$ as $x \to \infty$ and we write $f = o(g)$ if
$\frac{f(x)}{g(x)} \to 0$ as $x \to \infty$.

Throughout, we use $\varphi$ to denote the {\itshape Euler totient
  function} and $\mu$ to denote the {\itshape M\"{o}bius
  function}. For definitions and properties, the reader is referred to
\cite{HW}.

\begin{lemma}\label{lem:primitiveball}
  For each integer $D\geq 2$ there exists a constant $C>0$ such that
  \begin{equation*}
    \sum_{\substack{\q\in\ZZ^D \\ \abs{\q} \leq q \\ \gcd(\q)=1}} 1 \geq Cq^D
  \end{equation*}
  for all $q\geq 1$.
\end{lemma}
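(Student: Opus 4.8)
The plan is to estimate the number of primitive lattice points in a box by inclusion–exclusion via the Möbius function, exactly as in the classical count of primitive points. First I would write
\[
\sum_{\substack{\q\in\ZZ^D \\ \abs{\q}\leq q \\ \gcd(\q)=1}} 1
= \sum_{d=1}^{\lfloor q\rfloor} \mu(d) \, N_D\!\parens*{\frac{q}{d}},
\]
where $N_D(t) = \#\set{\q\in\ZZ^D : \abs{\q}\leq t}$ counts all integer points (primitive or not) in the cube of side $2t$. The elementary bound $N_D(t) = (2t+1)^D$ for $t\geq 0$, together with the standard estimate $(2t+1)^D = 2^D t^D + O(t^{D-1})$, lets me split the sum into a main term $2^D q^D \sum_{d\leq q}\mu(d)/d^D$ and an error term of size $O\parens*{q^{D-1}\sum_{d\leq q} 1} = O(q^D)$ — here I use $D\geq 2$ so that $q^{D-1}\cdot q = q^D$ rather than something larger. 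The main term converges: $\sum_{d=1}^\infty \mu(d)/d^D = 1/\zeta(D)$, and the tail $\sum_{d>q}\mu(d)/d^D = O(q^{1-D})$ again contributes only $O(q)$ after multiplication by $q^D$... wait, $2^D q^D \cdot O(q^{1-D}) = O(q)$, which for $q\geq 1$ is absorbed into the lower-order terms. So the count equals $\frac{2^D}{\zeta(D)}q^D + O(q^{D-1})$ for $q\geq 1$ (the implied constant depending only on $D$).

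From this asymptotic, the lower bound is immediate: since $\frac{2^D}{\zeta(D)} > 0$ and the error is $O(q^{D-1}) = o(q^D)$, there is some $q_0 = q_0(D)$ such that for all $q\geq q_0$ the sum is at least $\frac{1}{\zeta(D)}q^D \geq \frac{1}{2\zeta(D)}q^D$. For the finitely many $q\in[1,q_0)$ one checks directly that the sum is positive (e.g. the vectors $\pm e_1$ are always counted, so the sum is at least $2$), hence bounded below by a positive multiple of $q^D$ on that bounded range. Taking $C$ to be the minimum of $\frac{1}{2\zeta(D)}$ and these finitely many positive ratios gives the claimed constant valid for all $q\geq 1$.

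The only point requiring a little care — and the closest thing to an obstacle — is making the inclusion–exclusion rigorous and tracking that the error term is genuinely $O(q^{D-1})$ and not larger; this is where the hypothesis $D\geq 2$ is used, since for $D=1$ the "error" $\sum_{d\leq q} 1$ would be of the same order $q$ as the claimed main term and the argument collapses (indeed the number of primitive points in $\ZZ^1$ with $\abs{q}\leq q$ is just $2$, not $\gg q$). Everything else is routine: the identity $\sum_{d\mid \gcd(\q)}\mu(d) = \bone_{\gcd(\q)=1}$ justifies the Möbius expansion, and all sums involved are finite so there are no convergence subtleties beyond the elementary tail bound $\sum_{d>q} d^{-D} \ll q^{1-D}$.
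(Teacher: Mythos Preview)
Your strategy is the same as the paper's: establish the asymptotic $\sum 1 \sim 2^D\zeta(D)^{-1}q^D$ and then handle the finitely many small $q$ by taking a minimum. The paper simply cites the asymptotic as well known (referring to~\cite[Lemma~4.2]{DLN}), while you supply the standard M\"obius-inversion proof of it; the finite check at the end is identical in both arguments.

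There is one slip in your error estimate that you should fix. Substituting $t=q/d$ into $N_D(t)=2^D t^D + O(t^{D-1})$ gives an error of size $O\parens*{(q/d)^{D-1}}$, so after summing over $d\leq q$ the total error is
\[
O\parens*{q^{D-1}\sum_{d\leq q} d^{-(D-1)}}
=
\begin{cases}
O(q^{D-1}) & D\geq 3,\\
O(q\log q) & D=2,
\end{cases}
\]
not $O\parens*{q^{D-1}\sum_{d\leq q}1}=O(q^D)$ as you wrote. Your crude bound is of the same order as the main term and yields nothing; indeed it cannot be reconciled with the line ``So the count equals $\frac{2^D}{\zeta(D)}q^D + O(q^{D-1})$'' that follows. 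With the correct bound the error is $o(q^D)$ (note it is only $O(q\log q)$, not $O(q)$, when $D=2$), the asymptotic holds, and the rest of your argument goes through exactly as in the paper.
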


\begin{proof}
  It is well-known that for all $D\geq 2$ we have the asymptotic equality
  \begin{equation*}
    \sum_{\substack{\q\in\ZZ^D \\ \abs{\q} \leq q \\ \gcd(\q)=1}} 1 \sim 2^D \zeta(D)^{-1}q^D
  \end{equation*}
  as $q \to \infty$, where $\zeta$ denotes the Riemann zeta
  function. (See, for example,~\cite[Lemma~4.2]{DLN}.) In particular,
  there is some $Q_D$ such that
  \begin{equation*}
    \sum_{\substack{\q\in\ZZ^D \\ \abs{\q} \leq q \\ \gcd(\q)=1}} 1 \geq 2^{D-1} \zeta(D)^{-1}q^D
  \end{equation*}
  holds for all $q\geq Q_D$. We may take $C$ as the minimum of the finite set
  \begin{equation*}
    \set*{2^{D-1} \zeta(D)^{-1}}\cup    \set*{q^{-D}\sum_{\substack{\q\in\ZZ^D \\ \abs{\q} \leq q \\ \gcd(\q)=1}} 1: q\leq Q_D}.
  \end{equation*}
  This proves the lemma.
\end{proof}

\begin{lemma}\label{lem:primitivesphere}
  For each integer $D\geq 1$ there exists a constant $C>0$ such that
  \begin{equation*}
    \sum_{\substack{\q\in\ZZ^D \\ \abs{\q} \leq q \\ \gcd(\q, q)=1}} 1 \geq
    \begin{cases}
      C \varphi(q) &\textrm{if } D=1,\\
      Cq^D &\textrm{if } D\geq 2 
    \end{cases}
  \end{equation*}
  for all $q\geq 1$.
\end{lemma}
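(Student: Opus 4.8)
The plan is to treat the two cases separately, with the $D\geq 2$ case reduced to Lemma~\ref{lem:primitiveball} and the $D=1$ case handled by a direct count.

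For $D\geq 2$, the key observation is that the coprimality condition $\gcd(\q,q)=1$ appearing here is \emph{implied} by the (stronger-looking but in fact weaker for this counting purpose) condition $\gcd(\q)=1$: indeed, if $\gcd(q_1,\dots,q_D)=1$ then $\gcd(q_1,\dots,q_D,q)=\gcd\!\parens{\gcd(\q),q}=\gcd(1,q)=1$. Consequently
\[
\sum_{\substack{\q\in\ZZ^D \\ \abs{\q}\leq q \\ \gcd(\q,q)=1}} 1
\;\geq\;
\sum_{\substack{\q\in\ZZ^D \\ \abs{\q}\leq q \\ \gcd(\q)=1}} 1
\;\geq\; C q^D
\]
for all $q\geq 1$, where the last inequality is exactly Lemma~\ref{lem:primitiveball} (valid since $D\geq 2$). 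This disposes of the case $D\geq 2$.

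For $D=1$ the above reduction is unavailable, but one can count directly. For any $q\geq 1$, the integers in $\set{1,\dots,q}$ that are coprime to $q$ number exactly $\varphi(q)$ by definition of the Euler totient function, and by the symmetry $q_1\mapsto -q_1$ the integers in $\set{-q,\dots,-1}$ coprime to $q$ also number $\varphi(q)$. (The value $q_1=0$ contributes only in the degenerate case $q=1$, and may simply be ignored.) Hence
\[
\sum_{\substack{q_1\in\ZZ \\ \abs{q_1}\leq q \\ \gcd(q_1,q)=1}} 1 \;\geq\; 2\varphi(q)\;\geq\;\varphi(q),
\]
so the claim holds with $C=1$ in this case. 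Combining the two cases and taking $C$ to be the smaller of the two constants produced proves the lemma.

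\textbf{Expected difficulty.} There is essentially no obstacle here; the only point requiring a moment's care is the elementary implication $\gcd(\q)=1\Rightarrow\gcd(\q,q)=1$ that powers the $D\geq 2$ reduction, together with remembering that the $D=1$ case must be argued separately because Lemma~\ref{lem:primitiveball} excludes $D=1$. A reader wanting a self-contained argument for $D\geq 2$ could instead expand $\sum_{\abs{\q}\leq q,\,\gcd(\q,q)=1}1=\sum_{d\mid q}\mu(d)\bigl(2\lfloor q/d\rfloor+1\bigr)^{D}$ by Möbius inversion and bound it below, but invoking Lemma~\ref{lem:primitiveball} is cleaner and avoids this computation.
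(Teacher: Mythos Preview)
Your proof is correct and matches the paper's approach exactly: for $D\geq 2$ you bound below by the primitive-point count and invoke Lemma~\ref{lem:primitiveball}, and for $D=1$ you count directly using the definition of $\varphi$. The only difference is that you spell out the implication $\gcd(\q)=1\Rightarrow\gcd(\q,q)=1$ and the symmetry argument explicitly, whereas the paper leaves these tacit.
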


\begin{proof}
  The case $D=1$ is exactly the definition of $2\varphi(D)$, so let us
  assume $D\geq 2$. Then
  \begin{align*}
    \sum_{\substack{\q\in\ZZ^D \\ \abs{\q} \leq q \\ \gcd(\q, q)=1}} 1 &\geq \sum_{\substack{\q\in\ZZ^D \\ \abs{\q} \leq q \\ \gcd(\q)=1}} 1 \geq C q^D,
  \end{align*}
  for all $q\geq 1$, by Lemma~\ref{lem:primitiveball}.
\end{proof}

\begin{lemma}\label{lem:gcds}
  For each integer $D\geq 1$ there is a constant $C>0$ such that
  \begin{equation*}
    \sum_{\substack{\q \in \ZZ^D \\ \abs{\q} \leq q}}\frac{\varphi(\gcd(\q))}{\gcd(\q)} \geq C q^D\qquad\textrm{and}\qquad  \sum_{\substack{\q \in \ZZ^D \\ \abs{\q} \leq q}}\frac{\varphi(\gcd(\q, q))}{\gcd(\q,q)} \geq C q^D
  \end{equation*}
  both hold for all $q\geq 1$.
\end{lemma}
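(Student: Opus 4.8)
The plan is to treat the two ranges $D\ge 2$ and $D=1$ separately: the first reduces at once to counting primitive lattice points, while the second requires a genuine averaging argument.

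For $D\ge 2$ I would simply discard from each sum every term with $\gcd(\q)>1$. If $\gcd(\q)=1$ then $\gcd(\q,q)=1$ as well, since $\gcd(\q,q)\mid\gcd(\q)$, so in both sums every surviving summand equals $\varphi(1)/1=1$. Hence each sum is at least $\#\{\q\in\ZZ^D:\abs{\q}\le q,\ \gcd(\q)=1\}$, and Lemma~\ref{lem:primitiveball} bounds this from below by $Cq^D$. This settles both inequalities when $D\ge 2$.

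The case $D=1$ is where the content lies: restricting to $\gcd(q_1)=1$ leaves only $q_1=\pm1$, and restricting the second sum to $\gcd(q_1,q)=1$ gives only $\gg\varphi(q)$ by Lemma~\ref{lem:primitivesphere}, which is \emph{not} $\gg q$. Instead I would expand the summand through the identity $\varphi(n)/n=\sum_{d\mid n}\mu(d)/d$ and interchange the order of summation. In the second sum the divisors $d$ that appear all divide $q$, the number of $q_1\in[-q,q]\setminus\{0\}$ divisible by $d$ equals $2q/d$, and one is left with $2q\sum_{d\mid q}\mu(d)/d^{2}=2q\prod_{p\mid q}(1-p^{-2})$; since each factor $1-p^{-2}$ lies in $(0,1)$, dropping the primes not dividing $q$ only increases the product, so the sum is at least $2q\prod_{p}(1-p^{-2})=2q/\zeta(2)$. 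The first sum is handled the same way, the only difference being that $d$ now ranges over all $d\le q$, so $\lfloor q/d\rfloor$ is no longer exact and contributes an error $\ll\sum_{d\le q}1/d\ll\log q$; this leaves $\tfrac{12}{\pi^{2}}q+O(\log q)$, which exceeds $Cq$ for all large $q$, with the finitely many small $q$ absorbed into the constant. (Alternatively, one may bound the first sum below by $q^{-1}\sum_{j\le q}\varphi(j)$ and invoke $\sum_{j\le q}\varphi(j)\sim\tfrac{3}{\pi^{2}}q^{2}$.)

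The only real obstacle is this last point: because $\varphi(\abs{q_1})/\abs{q_1}$ has no uniform positive lower bound, the $D=1$ sums genuinely need an averaging step, together with the minor bookkeeping of the $\q=\0$ term and of the floor-function errors. Everything else is immediate from Lemmas~\ref{lem:primitiveball} and~\ref{lem:primitivesphere}.
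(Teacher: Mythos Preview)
Your proposal is correct and follows the same overall strategy as the paper: split into $D\ge 2$ (reduce to primitive-point counting via Lemma~\ref{lem:primitiveball}) and $D=1$ (genuine averaging). The treatment of $D\ge 2$ is identical, and for the first $D=1$ sum both you and the paper invoke the average order of $\varphi$.

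The one place your argument differs from the paper is the $D=1$ second sum. You expand $\varphi(n)/n=\sum_{d\mid n}\mu(d)/d$, interchange, and land directly on $2q\sum_{d\mid q}\mu(d)/d^{2}=2q\prod_{p\mid q}(1-p^{-2})$. The paper instead groups terms by $d=\gcd(q',q)$, obtaining $2\sum_{d\mid q}\frac{\varphi(d)}{d}\varphi(q/d)$, and then evaluates this convolution (again via M\"obius) to reach the same product $q\prod_{p\mid q}(1-p^{-2})$. Your route is a step shorter and avoids the intermediate convolution identity; the paper's route makes the structure of the sum as a Dirichlet-type convolution more explicit. Both are standard and reach the same bound $\ge 2q/\zeta(2)$.
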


\begin{proof}
  For the first expression, if $D\geq 2$, we have
  \begin{align*}
    \sum_{\substack{\q \in \ZZ^D \\ \abs{\q} \leq q}}\frac{\varphi(\gcd(\q))}{\gcd(\q)} &\geq \sum_{\substack{\q \in \ZZ^D\, \\ \abs{\q} \leq q \\ \gcd(\q)=1}}\frac{\varphi(\gcd(\q))}{\gcd(\q)} 
    = \sum_{\substack{\q \in \ZZ^D\, \\ \abs{\q} \leq q \\ \gcd(\q)=1}}1 
    \gg q^D, 
  \end{align*}
  where the last inequality follows from
  Lemma~\ref{lem:primitiveball}. On the other hand, if $D=1$, we may
  write
  \begin{align*}
    \sum_{\substack{\q \in \ZZ \\ \abs{\q} \leq q}}\frac{\varphi(\gcd(\q))}{\gcd(\q)}
    &= \sum_{\substack{q' \in \ZZ \\ \abs{q'} \leq q}}\frac{\varphi(q')}{q'}
   \gg q.
  \end{align*}
  The last estimate is well-known. It follows from the
    fact that the average order of $\varphi(q)$ is $6q/\pi^2$ (see,
    for example,~\cite[Theorem~330]{HW}).

  For the second expression in the lemma, if $D\geq 2$, we have
    \begin{align*}
      \sum_{\substack{\q \in \ZZ^D \\ \abs{\q} \leq q}}\frac{\varphi(\gcd(\q,q))}{\gcd(\q,q)}
      &\geq \sum_{\substack{\q \in \ZZ^D\, \\ \abs{\q} \leq q \\ \gcd(\q)=1}}\frac{\varphi(\gcd(\q,q))}{\gcd(\q,q)} 
      = \sum_{\substack{\q \in \ZZ^D\, \\ \abs{\q} \leq q \\ \gcd(\q)=1}} 1 
      \gg q^D,
    \end{align*}
    where the final inequality again follows by
    Lemma~\ref{lem:primitiveball}. If $D=1$, we write

  \begin{align*}
    \sum_{\substack{\q \in \ZZ \\ \abs{\q} \leq q}}\frac{\varphi(\gcd(\q,q))}{\gcd(\q,q)}
    &= \sum_{d\mid q} \frac{\varphi(d)}{d}\sum_{\substack{q' \in \ZZ\, \\ \abs{q'} \leq q \\ \gcd(q',q)=d}} 1 \\
    &= \sum_{d\mid q} \frac{\varphi(d)}{d}\sum_{\substack{q' \in \ZZ\, \\ \abs{q'} \leq q/d \\ \gcd\left(q',\frac{q}{d}\right)=1}} 1 \\
    &= 2 \sum_{d\mid q} \frac{\varphi(d)}{d}\varphi(q/d).
  \end{align*}
  Meanwhile, we have
  \begin{align*}
    \sum_{d\mid q}\frac{\varphi(d)\varphi(q/d)}{d}
    &= \sum_{d\mid q}\varphi(q/d) \sum_{j \mid d}\frac{\mu(j)}{j} \\
    &= \sum_{j \mid q}\frac{\mu(j)}{j} \sum_{i \mid (q/j)}\varphi(q/ij) \\
    &= \sum_{j \mid q}\frac{\mu(j)}{j} \parens*{\frac{q}{j}}\\
    &= q \sum_{j \mid q}\frac{\mu(j)}{j^2} \\
    &= q \prod_{p \mid q}\parens*{1 - p^{-2}}.
  \end{align*}
  Noting that (see \cite[Theorem 280]{HW}) for every $q\geq 1$ we have
  \begin{equation*}
    \zeta(2)^{-1} \leq \prod_{p \mid q}\parens*{1 - p^{-2}} \leq 1,
  \end{equation*}
we see that
\begin{align*}
 \sum_{d\mid q}\frac{\varphi(d)\varphi(q/d)}{d}
    = q \prod_{p \mid q}\parens*{1 - p^{-2}}
    &\asymp q.
\end{align*}
This finishes the proof of the lemma.
\end{proof}

\begin{lemma}\label{lem:funny}
  Suppose $\pi = (\pi_1, \dots, \pi_k)$ is a partition of $[m+n]$ such
  that for every $j=1,\dots,k$ we have
  $\abs{\pi_j}\geq 2$, as in
  Section~\ref{sec:partition-reduction}. Suppose there is some
  $\ell\in\{1,2,\dots,k\}$ such that
  $\pi_\ell\cap(m+[n])\neq\emptyset$ and $\abs{\pi_\ell}\geq 3$. Then
  there exists a constant $C >0$ such that
  \begin{equation*}
    \sum_{\substack{\abs{\q}=q \\ \q \in Q(\pi)}}\prod_{{\substack{1 \leq j \leq k \\\abs{\pi_j\cap[m]}=1}}}\frac{\varphi(\gcd(\pi_j(\q)))}{\gcd(\pi_j(\q))} \geq C q^{n-1}
  \end{equation*}
  for all $q\geq 1$. If no such $\ell$ exists, then the above sum can be bounded below by $\geq C q^{n-2}\varphi(q)$.
\end{lemma}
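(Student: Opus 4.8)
The plan is to exploit that the conditions defining the sum, and the summand itself, split along the partition $\pi$, so that the sum over a \emph{ball} $\abs{\q}\le q$ factorises over the partition components; one then passes to the \emph{sphere} $\abs{\q}=q$ by restricting to tuples in which one prescribed coordinate of $\q$ equals $q$, and the hypothesis on $\ell$ dictates which coordinate can be prescribed at the least cost.

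Relabel $\pi$ as in Section~\ref{sec:partition-reduction}. Each of the $n$ coordinates of $\q$ lies in exactly one component; for a component $\pi_j$ meeting $m+[n]$ write $e_j\ge 1$ for the number of coordinates of $\q$ it contains and $\pi_j(\q)\in\ZZ^{e_j}$ for the corresponding sub-vector, so $\sum e_j=n$. Put $w_j=\varphi(\gcd(\cdot))/\gcd(\cdot)$ if $\abs{\pi_j\cap[m]}=1$ and $w_j\equiv 1$ otherwise; these are the factors appearing in the lemma. Since $\q\in Q(\pi)$ amounts to $\gcd(\pi_j(\q))=1$ for each $\pi_j\subset m+[n]$, since $\abs{\q}\le q$ amounts to $\abs{\pi_j(\q)}\le q$ for every such $j$, and since the summand is $\prod_j w_j(\pi_j(\q))$, the sum over $\abs{\q}\le q$ with $\q\in Q(\pi)$ factorises into a product of one-component sums, each of which is $\gg q^{e_j}$: by Lemma~\ref{lem:gcds} when $\abs{\pi_j\cap[m]}=1$ (note $e_j\ge1$), by an elementary lattice-point count when $\pi_j$ is mixed with $\abs{\pi_j\cap[m]}\ge2$, and by Lemma~\ref{lem:primitiveball} when $\pi_j\subset m+[n]$ (here $e_j\ge2$). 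Hence the ball sum is $\gg q^n$.

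To reach the sphere, choose a component $\pi_\ell$ meeting $m+[n]$ together with a coordinate of $\q$ lying in it, and restrict the sphere sum to tuples with that coordinate equal to $q$; the remaining coordinates then only need absolute value $\le q$, so this smaller (still non-negative) sum factorises as before, except that the $\ell$-th factor becomes a sum over the $e_\ell-1$ free coordinates of $\pi_\ell(\q)$ in which any relevant gcd is taken together with the frozen value $q$. If some $\ell$ has $\pi_\ell\cap(m+[n])\ne\emptyset$ and $\abs{\pi_\ell}\ge3$, this factor is $\gg q^{e_\ell-1}$: when $\pi_\ell\subset m+[n]$ we have $e_\ell\ge3$ and apply Lemma~\ref{lem:primitivesphere} in dimension $e_\ell-1\ge2$; when $\pi_\ell$ is mixed with $\abs{\pi_\ell\cap[m]}\ge2$ it is an elementary lattice count; and when $\pi_\ell$ is mixed with $\abs{\pi_\ell\cap[m]}=1$ then $e_\ell=\abs{\pi_\ell}-1\ge2$ and we apply the second estimate of Lemma~\ref{lem:gcds} in dimension $e_\ell-1\ge1$. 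Multiplying by the remaining factors ($\gg q^{e_j}$ each) gives $\gg q^{n-1}$.

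If no such $\ell$ exists, then every component meeting $m+[n]$ has size $2$, so each mixed such component has $e_j=1$ with $w_j=\varphi(\gcd(\cdot))/\gcd(\cdot)$ and each component contained in $m+[n]$ has $e_j=2$. If some $\pi_\ell\subset m+[n]$ is present, freezing one of its two coordinates at $q$ leaves the other running over integers of absolute value $\le q$ that are coprime to $q$, contributing $\gg\varphi(q)$ by Lemma~\ref{lem:primitivesphere}; the product is then $\gg q^{n-2}\varphi(q)$. Otherwise all components meeting $m+[n]$ are mixed of size $2$, and freezing the lone $\q$-coordinate of one of them at $q$ makes its factor equal to $\varphi(q)/q$, so the product is $\gg q^{n-1}\cdot(\varphi(q)/q)=q^{n-2}\varphi(q)$. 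I expect the factorisation to be the conceptual core and routine to verify; the only step requiring genuine care is the bookkeeping in this last case analysis — matching each shape of $\pi_\ell$ with the correct arithmetic lemma and checking its dimension lies in the admissible range, which is exactly where $\abs{\pi_j}\ge2$ and (where available) $\abs{\pi_\ell}\ge3$ are used.
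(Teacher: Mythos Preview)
Your proposal is correct and follows essentially the same approach as the paper: restrict the sphere $\abs{\q}=q$ to tuples whose norm is achieved in a coordinate belonging to a well-chosen component $\pi_\ell$, factorise the resulting sum over the partition components, and then match each component's shape to the appropriate one of Lemmas~\ref{lem:primitiveball}--\ref{lem:gcds}. Your write-up packages the factorisation more abstractly, while the paper writes out each case of $\pi_\ell$ explicitly, but the logic and the case split (including the final $\abs{\pi_\ell}=2$ dichotomy yielding the $q^{n-2}\varphi(q)$ bound) are the same.
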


\begin{proof} 
  Suppose first that $\ell \in (b,k]$, so $\pi_{\ell} \subset
  m+[n]$. By restricting the sum to those $\q$ whose norm is achieved
  in a coordinate corresponding to $\pi_\ell$, i.e.
  $|\pi_{\ell}(\q)| = |\q|$, we may bound
    \begin{equation*}
      \sum_{\substack{\abs{\q}=q \\ \q \in
          Q(\pi)}}\prod_{{\substack{1 \leq j \leq k \\\abs{\pi_j\cap[m]}=1}}}\frac{\varphi(\gcd(\pi_j(\q)))}{\gcd(\pi_j(\q))}
      \geq
      \sum_{\substack{\abs{\pi_\ell(\q)}=q \\ \q \in
          Q(\pi)}}\prod_{{\substack{1 \leq j \leq k \\\abs{\pi_j\cap[m]}=1}}}\frac{\varphi(\gcd(\pi_j(\q)))}{\gcd(\pi_j(\q))}\prod_{{\substack{1 \leq j \leq k \\\abs{\pi_j\cap[m]}\neq 1}}}1.
    \end{equation*}
     We can further split
    \begin{multline*}
      \sum_{\substack{\abs{\pi_\ell(\q)}=q \\ \q \in
          Q(\pi)}}\prod_{{\substack{1 \leq j \leq k \\\abs{\pi_j\cap[m]}=1}}}\frac{\varphi(\gcd(\pi_j(\q)))}{\gcd(\pi_j(\q))}\prod_{{\substack{1 \leq j \leq k \\\abs{\pi_j\cap[m]}\neq 1}}}1 \\
      = \brackets*{\prod_{\abs{\pi_j\cap[m]}=1}
        \sum_{\substack{\q \in \ZZ^{\abs{\pi_j\cap(m+[n])}} \\
            \abs{\q} \leq q}}\frac{\varphi(\gcd(\q))}{\gcd(\q)}}
      \brackets*{\prod_{\abs{\pi_j\cap[m]}>1}
        \sum_{\substack{\q\in\ZZ^{\abs{\pi_j\cap(m+[n])}} \\
            \abs{\q}\leq q}} 1} \\ \times
      \brackets*{\prod_{\substack{j\in (b,k] \\ j\neq \ell}}
        \sum_{\substack{\q\in\ZZ^{\abs{\pi_j}} \\ \abs{\q}\leq q,\,
            \gcd(\q)=1}} 1} \brackets*{\sum_{\substack{\q \in
            \ZZ^{\abs{\pi_\ell}-1} \\ \abs{\q}\leq q,\, \gcd(\q,
            q)=1}} 1}.
  \end{multline*}
  Using Lemmas~\ref{lem:primitiveball},~\ref{lem:primitivesphere},
  and~\ref{lem:gcds}, and the fact that $\abs{\pi_\ell}\geq 3$, we see that
  there is some constant $C>0$ such that we can bound the above
  expression below by
  \begin{align*}
    &\geq C \brackets*{\prod_{\abs{\pi_j\cap[m]}=1}
      q^{\abs{\pi_j\cap(m+[n])}}} \brackets*{\prod_{\abs{\pi_j\cap[m]}>1}
      q^{\abs{\pi_j\cap(m+[n])}}} \brackets*{\prod_{\substack{j\in (b,k] \\
    j\neq \ell}} q^{\abs{\pi_j}}} \brackets*{q^{\abs{\pi_\ell}-1}} \\
    &= C q^{n-1},
  \end{align*}
  as needed.

  Now suppose $\ell \in [1,a]$. If $\abs{\pi_\ell \cap [m]}>1$ then by
  again restricting the sum to those $\q$ having
  $\abs{\q} = \abs{\pi_\ell(\q)}$, we have
  \begin{multline*}
    \sum_{\substack{\abs{\q}=q \\ \q \in
        Q(\pi)}}\prod_{\abs{\pi_j\cap[m]}=1}\frac{\varphi(\gcd(\pi_j(\q)))}{\gcd(\pi_j(\q))}
    \\ \geq \brackets*{\prod_{\abs{\pi_j\cap[m]}=1} \sum_{\substack{\q
          \in \ZZ^{\abs{\pi_j\cap(m+[n])}} \\ \abs{\q} \leq
          q}}\frac{\varphi(\gcd(\q))}{\gcd(\q)}}
    \brackets*{\prod_{\substack{\abs{\pi_j\cap[m]}>1 \\ j\neq \ell}}
      \sum_{\substack{\q\in\ZZ^{\abs{\pi_j\cap(m+[n])}} \\
          \abs{\q}\leq q}} 1} \\ \times
    \brackets*{\sum_{\substack{\q\in\ZZ^{\abs{\pi_\ell\cap(m+[n])}-1} \\
          \abs{\q}\leq q}} 1} \brackets*{\prod_{j\in (b,k] }
      \sum_{\substack{\q\in\ZZ^{\abs{\pi_j}} \\ \abs{\q}\leq q,\,
          \gcd(\q)=1}} 1},
  \end{multline*}
  and by Lemmas~\ref{lem:primitiveball} and~\ref{lem:gcds} we bound below by
  \begin{align*}
    &\sum_{\substack{\abs{\q}=q \\ \q \in
    Q(\pi)}}\prod_{\abs{\pi_j\cap[m]}=1}\frac{\varphi(\gcd(\pi_j(\q)))}{\gcd(\pi_j(\q))}\\
    &\geq C \brackets*{\prod_{\abs{\pi_j\cap[m]}=1}
      q^{\abs{\pi_j\cap(m+[n])}}}
      \brackets*{\prod_{\substack{\abs{\pi_j\cap[m]}>1 \\
    j\neq \ell}} q^{\abs{\pi_j\cap(m+[n])}}}
    \brackets*{q^{\abs{\pi_\ell\cap(m+[n])}-1}} \brackets*{\prod_{j\in
    (b,k] }q^{\abs{\pi_j}}} \\ &= C q^{n-1}.
  \end{align*}
  On the other hand, if $\abs{\pi_\ell \cap [m]}=1$, then again by
  restricting the sum to those $\q$ such that $|\pi_{\ell}(\q)|=|\q|$,
  we have
  \begin{align*}
    \sum_{\substack{\abs{\q}=q \\ \q \in
        Q(\pi)}}\prod_{\abs{\pi_j\cap[m]}=1}&\frac{\varphi(\gcd(\pi_j(\q)))}{\gcd(\pi_j(\q))}
    \\ \geq & \brackets*{\prod_{\substack{\abs{\pi_j\cap[m]}=1 \\ j\neq
          \ell}} \sum_{\substack{\q \in \ZZ^{\abs{\pi_j\cap(m+[n])}}
          \\ \abs{\q} \leq q}}\frac{\varphi(\gcd(\q))}{\gcd(\q)}}
    \brackets*{ \sum_{\substack{\q \in
          \ZZ^{\abs{\pi_\ell\cap(m+[n])}-1} \\ \abs{\q}
          \leq q}}\frac{\varphi(\gcd(\q,q))}{\gcd(\q,q)}}\\
    &\phantom{===================} \times\brackets*{\prod_{\abs{\pi_j\cap[m]}>1}
      \sum_{\substack{\q\in\ZZ^{\abs{\pi_j\cap(m+[n])}} \\
          \abs{\q}\leq q}} 1} \brackets*{\prod_{j\in (b,k] }
      \sum_{\substack{\q\in\ZZ^{\abs{\pi_j}} \\ \abs{\q}\leq q,\,
          \gcd(\q)=1}} 1}\\ \geq &
    C\brackets*{\prod_{\substack{\abs{\pi_j\cap[m]}=1 \\ j\neq \ell}}
      q^{\abs{\pi_j\cap(m+[n])}} }
    \brackets*{q^{\abs{\pi_\ell\cap(m+[n])}-1}}
    \brackets*{\prod_{\abs{\pi_j\cap[m]}>1}q^{\abs{\pi_j\cap(m+[n])}}}
    \brackets*{\prod_{j\in (b,k] }q^{\abs{\pi_j}}} \\ =& C q^{n-1}
  \end{align*}
  for all $q\geq 1$. The penultimate line in this case again follows
  from Lemmas~\ref{lem:primitiveball} and~\ref{lem:gcds}. This proves
  the first part of the lemma.

  Suppose now that there is no $\ell$ such that
  $\pi_\ell\cap(m+[n])\neq\emptyset$ and $\abs{\pi_\ell}\geq 3$. If
  instead there is some $\ell\in (b,k]$ with $|\pi_{\ell}|=2$, then
  restricting the sum to the $\q$ such that
  $\abs{\q} = \abs{\pi_\ell(\q)}$ and using
  Lemmas~\ref{lem:primitiveball}, \ref{lem:primitivesphere},
  and~\ref{lem:gcds}, we find
  \begin{align*}
    \sum_{\substack{\abs{\q}=q \\ \q \in
        Q(\pi)}}\prod_{\abs{\pi_j\cap[m]}=1}&\frac{\varphi(\gcd(\pi_j(\q)))}{\gcd(\pi_j(\q))}
    \\
    \geq & \brackets*{\prod_{\abs{\pi_j\cap[m]}=1} \sum_{\substack{\q
          \in \ZZ^{\abs{\pi_j\cap(m+[n])}} \\ \abs{\q} \leq
          q}}\frac{\varphi(\gcd(\q))}{\gcd(\q)}}
    \brackets*{\prod_{\abs{\pi_j\cap[m]}>1}
      \sum_{\substack{\q\in\ZZ^{\abs{\pi_j\cap(m+[n])}} \\
          \abs{\q}\leq q}} 1} \\ 
          &\phantom{===================}\times
    \brackets*{\prod_{\substack{j\in (b,k] \\ j\neq \ell}}
      \sum_{\substack{\q\in\ZZ^{\abs{\pi_j}} \\ \abs{\q}\leq q,\,
          \gcd(\q)=1}} 1} \brackets*{\sum_{\substack{\q \in
          \ZZ^{\abs{\pi_\ell}-1} \\ \abs{\q}\leq q,\, \gcd(\q, q)=1}}
      1} \\
    \gg & \brackets*{\prod_{\abs{\pi_j\cap[m]}=1} q^{\abs{\pi_j\cap(m+[n])}}}
    \brackets*{\prod_{\abs{\pi_j\cap[m]}>1}q^{\abs{\pi_j\cap(m+[n])}}}
    \brackets*{\prod_{\substack{j\in (b,k] \\ j\neq \ell}} q^{\abs{\pi_j}}}
    \brackets*{\varphi(q)} \\
    \gg & q^{n-2}\varphi(q).
  \end{align*}
  If there is no $\ell\in (b,k]$, then we must have $n=1$ and there
  must necessarily exist some $\ell\in[1,a]$ such that
  $\abs{\pi_\ell \cap [m]}=1$. Restricting the sum again to $\q$ such
  that $|\pi_{\ell}(\q)|=|\q|$, we have
  \begin{align*}
    \sum_{\substack{\abs{\q}=q \\ \q \in
    Q(\pi)}}\prod_{\abs{\pi_j\cap[m]}=1}&\frac{\varphi(\gcd(\pi_j(\q)))}{\gcd(\pi_j(\q))}
    \\ \geq & \brackets*{\prod_{\substack{\abs{\pi_j\cap[m]}=1 \\ j\neq
    \ell}} \sum_{\substack{\q \in \ZZ^{\abs{\pi_j\cap(m+[n])}}
    \\ \abs{\q} \leq q}}\frac{\varphi(\gcd(\q))}{\gcd(\q)}}
    \brackets*{ \sum_{\substack{\q \in
    \ZZ^{\abs{\pi_\ell\cap(m+[n])}-1} \\ \abs{\q}
    \leq q}}\frac{\varphi(\gcd(\q,q))}{\gcd(\q,q)}}\\
                                        &\phantom{===================}\times \brackets*{\prod_{\abs{\pi_j\cap[m]}>1}
                                          \sum_{\substack{\q\in\ZZ^{\abs{\pi_j\cap(m+[n])}} \\
    \abs{\q}\leq q}} 1} \brackets*{\prod_{j\in (b,k] }
    \sum_{\substack{\q\in\ZZ^{\abs{\pi_j}} \\ \abs{\q}\leq q,\,
    \gcd(\q)=1}} 1}\\ 
    \gg & \brackets*{\prod_{\substack{\abs{\pi_j\cap[m]}=1 \\ j\neq \ell}}
    q^{\abs{\pi_j\cap(m+[n])}} }
    \brackets*{\frac{\varphi(q)}{q}}
    \brackets*{\prod_{\abs{\pi_j\cap[m]}>1}q^{\abs{\pi_j\cap(m+[n])}}}
    \brackets*{\prod_{j\in (b,k] }q^{\abs{\pi_j}}} \\
     \gg & q^{n-2}\varphi(q).
  \end{align*}
  The penultimate line in this case follows from
  Lemmas~\ref{lem:primitiveball} and~\ref{lem:gcds}. This completes
  the proof of the lemma.
\end{proof}

\section{Counting}
\label{sec:counting}

The main purpose of this section is to prove Lemma~\ref{lem:counting}
which states that, in a sense, the integer points $(\p,\q)\in P(\pi)$
with $\abs{\q}= q$ are uniformly distributed for large $q$, where
$\pi$ is a partition of $[m+n]$ as in
Section~\ref{sec:partition-reduction}. The lemma is important later in
our proof of Lemma~\ref{lem:uniformity}, where we show that the sets
$A^\pi(\q)$ (defined in the next section) also enjoy a
kind of uniform distribution in $\I^{nm}$ where
$\I = [0,1]$.

 The following lemma can be deduced from~\cite[Lemma~1]{Nied}. We include its proof for completeness.

\begin{lemma}\label{lem:niederreiter}
  For any $0\leq \alpha < \beta \leq 1$ with $\beta-\alpha = \gamma$,
  there exists some integer $Q_\gamma>0$ such that
  \begin{equation}\label{eq:1}
    \frac{1}{2}\varphi(q)\gamma \leq \#\set*{p\in\NN : \gcd(p,q)=1, \quad \alpha q \leq p \leq \beta q} \leq   \frac{3}{2}\varphi(q)\gamma
  \end{equation}
  whenever $q\geq Q_\gamma$.
\end{lemma}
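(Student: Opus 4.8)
The plan is to compute the count by Möbius inversion and then show that the main term is $\gamma\varphi(q)$ while the error term is negligible compared to $\varphi(q)$ once $q$ is large (with the largeness threshold allowed to depend on $\gamma$).

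First I would detect the coprimality condition using $\sum_{d\mid\gcd(p,q)}\mu(d) = \bone_{\gcd(p,q)=1}$ and interchange the order of summation:
\[
  \#\set*{p\in\NN : \gcd(p,q)=1,\ \alpha q \leq p \leq \beta q}
  = \sum_{d\mid q}\mu(d)\,\#\set*{p\in\NN : \alpha q\leq p\leq\beta q,\ d\mid p}.
\]
The inner cardinality counts the multiples of $d$ lying in an interval of length $(\beta-\alpha)q=\gamma q$, so it equals $\gamma q/d + O(1)$ with an absolute implied constant (independent of $\alpha$, $\beta$, $q$, and $d$).

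Next I would separate the main term from the error. Summing the previous estimate over $d\mid q$ and using the classical identity $\varphi(q)=q\sum_{d\mid q}\mu(d)/d$ gives
\[
  \#\set*{p\in\NN : \gcd(p,q)=1,\ \alpha q \leq p \leq \beta q}
  = \gamma\,\varphi(q) + O\parens*{2^{\omega(q)}},
\]
where $\omega(q)$ denotes the number of distinct primes dividing $q$ and the error is $O\parens*{\sum_{d\mid q}\abs{\mu(d)}}=O\parens*{2^{\omega(q)}}$, the number of squarefree divisors of $q$. To conclude, I would invoke the standard estimates $2^{\omega(q)}\ll_\varepsilon q^{\varepsilon}$ for every $\varepsilon>0$ together with $\varphi(q)\gg q/\log\log q$ (see \cite{HW}), which yield $2^{\omega(q)}=o(\varphi(q))$ as $q\to\infty$. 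Hence for any fixed $\gamma>0$ there is a $Q_\gamma$ such that the error term is at most $\tfrac12\gamma\varphi(q)$ for all $q\geq Q_\gamma$, and then
\[
  \abs*{\#\set*{p\in\NN : \gcd(p,q)=1,\ \alpha q \leq p \leq \beta q} - \gamma\varphi(q)} \leq \tfrac12\gamma\varphi(q)\qquad(q\geq Q_\gamma),
\]
which is exactly~\eqref{eq:1}.

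I do not anticipate a genuine obstacle; the argument is entirely standard and is essentially the content of \cite[Lemma~1]{Nied}. The only point demanding a little care is that the threshold $Q_\gamma$ must be permitted to depend on $\gamma$: the implied constant in the error term $O(2^{\omega(q)})$ is absolute, so absorbing it into $\tfrac12\gamma\varphi(q)$ forces $q$ to be large in a way that degrades as $\gamma\to 0$, precisely as the statement allows. One should also be mildly careful that $\alpha q$ and $\beta q$ need not be integers, but this is harmlessly absorbed into the $O(1)$ above.
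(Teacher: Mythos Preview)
Your proposal is correct and follows essentially the same route as the paper: both extract the main term $\gamma\varphi(q)$ via M\"obius inversion and bound the error by a divisor-count that is $o(\varphi(q))$ using the standard estimates $d(q)\ll q^{\varepsilon}$ and $\varphi(q)\gg q/\log\log q$. The only cosmetic difference is that the paper applies M\"obius inversion to the summatory relation $\theta(q)=\sum_{d\mid q}\#(P_d\cap(\alpha,\beta))$, giving an error bounded by the full divisor function $\sum_{d\mid q}1$, whereas you insert the indicator $\sum_{d\mid\gcd(p,q)}\mu(d)$ directly and obtain the slightly sharper (but unnecessary) $2^{\omega(q)}$.
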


\begin{proof}
  Let $\theta(q) = \#\set*{p/q\in (\alpha, \beta)}$, and notice that
  $\floor{\gamma q} \leq \theta(q)\leq \floor{\gamma q}+1$. We have
  \begin{equation*}
    \theta (q) = \sum_{d\mid q} \#(P_d\cap (\alpha, \beta)),
  \end{equation*}
  where $P_d$ denotes the reduced fractions in $[0,1]$ with
  denominator $d$.  The M{\"o}bius inversion formula (see \cite[Theorem 266]{HW}) gives
  \begin{equation*}
    \#(P_q\cap (\alpha, \beta)) = \sum_{d\mid q} \mu\parens*{\frac{q}{d}}\theta(d)
  \end{equation*}
  For a lower bound, we have
  \begin{align*}
    \#(P_q\cap (\alpha, \beta)) &= \sum_{d\mid q} \mu\parens*{\frac{q}{d}}\theta(d)\\
                    &\geq \sum_{d\mid q} \mu\parens*{\frac{q}{d}}\gamma d -  \sum_{d\mid q} \set{\gamma d} \mu \parens*{\frac{q}{d}}\\
                    &= \gamma \varphi(q)-  \sum_{d\mid q} 1,
  \end{align*}
  and for an upper bound, we have
  \begin{align*}
    \#(P_q\cap (\alpha, \beta)) &= \sum_{d\mid q} \mu\parens*{\frac{q}{d}}\theta(d)\\
                                &\leq \sum_{d\mid q} \mu\parens*{\frac{q}{d}}\gamma d +  \sum_{d\mid q} 1 \\
                                &= \gamma \varphi(q) +  \sum_{d\mid q} 1.
  \end{align*}
  This last term is the number of divisors of $q$, which is
  $o(q^\eps)$ for any $\eps>0$~\cite[Theorem~315]{HW}, and in
  particular $o(\varphi(q))$  (since 
    $\varphi(q) \gg q/\log\log q$ by~\cite[Theorem~328]{HW}).
  Therefore, there exists $Q_\gamma>0$ such that for all
  $q\geq Q_\gamma$ we have
  \begin{equation*}
    \sum_{d\mid q} 1 \leq \frac{1}{2} \gamma \varphi(q).
  \end{equation*}
  Combining this with the previous two bounds gives~(\ref{eq:1}) for
  all $q\geq Q_\gamma$, proving the lemma.
\end{proof}

\begin{lemma}\label{lem:primitivepoints}
Suppose $d \in \N$ and, for $1 \leq i \leq d$, suppose that $\alpha_i, \beta_i \in [0,1]$ are such that
\[0 \leq \alpha_i < \beta_i \leq 1 \quad \text{and} \quad \gamma = \beta_i - \alpha_i \quad \text{ for all } 1 \leq i \leq d.\] 
Then there exist $C, Q_\gamma>0$ such that 
\begin{align} \label{prim bound}
\#\left\{(p_1,\dots,p_d) \in \Z^d: \gcd(p_1,\dots,p_d) = 1 \text{ and } \alpha_iq \leq p_i \leq \beta_iq \quad \text{for } i=1,\dots,d \right\} &\geq C \gamma^d q^d
\end{align}
holds for all $q\geq Q_\gamma$. 
\end{lemma}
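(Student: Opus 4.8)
The plan is to reduce \eqref{prim bound} to the case $d=2$ and then prove that case by a Fubini-type argument, fixing the first coordinate and sieving over the second. I will assume throughout that $d\ge 2$, which is the only case needed below. If $d\ge 3$, then because $\gcd(p_1,p_2)=1$ already forces $\gcd(p_1,\dots,p_d)=1$, the left-hand side of \eqref{prim bound} is at least
\[
  \brackets*{\prod_{i=3}^{d}\#\set*{p_i\in\ZZ:\alpha_i q\le p_i\le\beta_i q}}\cdot\#\set*{(p_1,p_2)\in\ZZ^2:\gcd(p_1,p_2)=1,\ \alpha_i q\le p_i\le\beta_i q\ (i=1,2)}.
\]
Each factor in the first bracket is at least $\gamma q-1\ge\tfrac12\gamma q$ as soon as $q\ge 2/\gamma$, so it will be enough to handle $d=2$.

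For $d=2$, write $N$ for the quantity on the left of \eqref{prim bound}. After discarding the (nonnegative) contribution of $p_1=0$, if it is present, I would bound
\[
  N\ \ge\ \sum_{\substack{p_1\in\ZZ,\ p_1\ge 1\\ \alpha_1 q\le p_1\le\beta_1 q}}\ \#\set*{p_2\in\ZZ:\alpha_2 q\le p_2\le\beta_2 q,\ \gcd(p_1,p_2)=1},
\]
and, for each fixed $p_1\ge 1$, evaluate the inner count by inclusion--exclusion over the divisors of $p_1$. Using the elementary fact that an interval of length $\gamma q$ contains $\gamma q/e+O(1)$ multiples of $e$ (absolute implied constant), this gives
\[
  \#\set*{p_2\in\ZZ:\alpha_2 q\le p_2\le\beta_2 q,\ \gcd(p_1,p_2)=1}=\gamma q\sum_{e\mid p_1}\frac{\mu(e)}{e}+O\!\parens*{\sum_{e\mid p_1}1}=\gamma q\,\frac{\varphi(p_1)}{p_1}+O\!\parens*{d(p_1)},
\]
with $d(\cdot)$ the divisor function. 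Summing over $p_1$, I would invoke the standard estimates $\sum_{n\le x}\varphi(n)/n=x/\zeta(2)+O(\log x)$ (an immediate consequence of $\varphi(n)/n=\sum_{e\mid n}\mu(e)/e$) and $\sum_{n\le q}d(n)=O(q\log q)$ to conclude that $N\ge\gamma^2 q^2/\zeta(2)-O(q\log q)$, with absolute implied constants. Since $\gamma$ is fixed, $\gamma^2q^2$ eventually dominates $q\log q$, so there is some $Q_\gamma>0$, depending only on $\gamma$, with $N\ge\gamma^2 q^2/(2\zeta(2))$ for all $q\ge Q_\gamma$. Together with the first paragraph this proves \eqref{prim bound}, e.g.\ with $C=2^{-(d-1)}\zeta(2)^{-1}$.

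The argument is essentially routine, so I do not expect a genuine obstacle; the points that need care are bookkeeping ones. First, one must verify that the error terms (all of size $O(q\log q)$) are of strictly smaller order than the main term $\asymp\gamma^2 q^2$ once $\gamma$ is fixed --- this is precisely why $Q_\gamma$ must be allowed to depend on $\gamma$, while the constant $C$ does not. Second, one must make sure that an interval $[\alpha_i q,\beta_i q]$ containing $0$ does not cause trouble; here it does not, because the only affected term (the $p_1=0$ row) is simply dropped. All the arithmetic input --- the interval sieve and the average orders of $\varphi(n)/n$ and $d(n)$ --- is classical.
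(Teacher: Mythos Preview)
Your proof is correct and follows essentially the same route as the paper's: both reduce to $d=2$ via the observation that $\gcd(p_1,p_2)=1$ already forces $\gcd(p_1,\dots,p_d)=1$, and both handle $d=2$ by fixing $p_1$, sieving the inner count to $\gamma q\,\varphi(p_1)/p_1$ up to a divisor-function error, and then summing using the average order of $\varphi(n)/n$. The only cosmetic differences are that the paper phrases the reduction as an induction (peeling off one coordinate at a time) and invokes Lemma~\ref{lem:niederreiter} for the inner count rather than redoing the M{\"o}bius computation; your direct treatment is arguably cleaner about the uniformity of the error terms and the $p_1=0$ edge case.
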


\begin{proof}
We proceed by induction, first establishing \eqref{prim bound} in the case when $d=2$. Notice that 
\[\#\left\{(p_1,p_2) \in \Z^2: \gcd(p_1,p_2) = 1 \text{ and } \alpha_iq \leq p_i \leq \beta_iq \quad \text{for } i=1,2 \right\} = \sum_{p_1=\lceil \alpha_1 q \rceil}^{\lfloor \beta_1 q \rfloor}{\sum_{\substack{p_2 = \lceil \alpha_2 q \rceil\\ \gcd(p_1,p_2)=1}}^{\lfloor \beta_2 q \rfloor}{1}}.\]
For a fixed $p_1$, by Lemma \ref{lem:niederreiter}, the inner sum is 
\begin{align*}
\sum_{\substack{p_2 = \lceil \alpha_2 q \rceil\\ \gcd(p_1,p_2)=1}}^{\lfloor \beta_2 q \rfloor}{1} &= \#\left\{p_2 \in \N: \gcd(p_1,p_2)=1 \text{ and } \alpha_2 q \leq p_2 \leq \beta_2 q\right\} \\
                                                 &= \#\left\{p_2 \in \N: \gcd(p_1,p_2)=1 \text{ and } \left(\frac{\alpha_2 q}{p_1}\right)p_1 \leq p_2 \leq \left(\frac{\beta_2 q}{p_1}\right)p_1 \right\} \\
                                                 &\geq \frac{1}{2}\varphi(p_1)\left(\frac{\beta_2 q}{p_1}- \frac{\alpha_2 q}{p_1}\right) \\
                                                 &= \frac{1}{2} \frac{\varphi(p_1)}{p_1} q \gamma.
\end{align*}
Thus, recalling that
$\sum_{n=1}^{N}{\frac{\varphi(n)}{n}} \sim \frac{6}{\pi^2} N$
(see~\cite{HW}), we have that there exists some $C>0, Q>0$ such that
if $q \geq Q$ we have
\[\sum_{p_1=\lceil \alpha_1 q \rceil}^{\lfloor \beta_1 q \rfloor}{\sum_{\substack{p_2 = \lceil \alpha_2 q \rceil\\ \gcd(p_1,p_2)=1}}^{\lfloor \beta_2 q \rfloor}{1}} \geq \frac{q \gamma}{2} \left(\sum_{p_1=\ceil{\alpha_1 q}}^{\lfloor \beta_1 q \rfloor}{\frac{\varphi(p_1)}{p_1}} \right) \geq C q \gamma(\beta_1 q - \alpha_1 q) = Cq^2 \gamma^2.\]
This completes the proof of \eqref{prim bound} in the case that $d=2$. 

Next suppose that \eqref{prim bound} has been established for
$d=k$. We will now show that it also holds when $d=k+1$, and so the
proof is then completed by induction. In the case when $d=k+1$, we are
interested in
\begin{align*}
\mathcal{P}(q):=\left\{(p_1,\dots,p_d,p_{d+1}) \in \Z^{d+1}: \gcd(p_1,\dots,p_d,p_{d+1}) = 1 \text{ and } \alpha_iq \leq p_i \leq \beta_iq \quad \text{for } i=1,\dots,d+1 \right\}.
\end{align*}
However, notice that 
\begin{align*}
\left\{(p_1,\dots,p_d,p_{d+1}) \in \Z^{d+1}: \gcd(p_1,\dots,p_d) = 1 \text{ and } \alpha_iq \leq p_i \leq \beta_iq \quad \text{for } i=1,\dots,d+1 \right\} \subset \mathcal{P}(q).
\end{align*}
Now, by our inductive hypothesis,
\begin{align*}
&\#\left\{(p_1,\dots,p_d,p_{d+1}) \in \Z^{d+1}: \gcd(p_1,\dots,p_d) = 1 \text{ and } \alpha_iq \leq p_i \leq \beta_iq \quad \text{for } i=1,\dots,d+1 \right\} \\
&\phantom{==================}= \sum_{p_{d+1} = \lceil \alpha_{d+1}q \rceil}^{\lfloor \beta_{d+1}q \rfloor}\left(\sum_{p_1 = \lceil \alpha_1 q \rceil}^{\lfloor \beta_1 q \rfloor}{\sum_{p_2 = \lceil \alpha_2 q \rceil}^{\lfloor \beta_2 q \rfloor}{\dots{\sum_{\substack{p_d = \lceil \alpha_d q \rceil \\ \gcd(p_1,\dots,p_d)=1}}^{\lfloor \beta_d q \rfloor}{1}}}}\right) \\
&\phantom{==================}\geq C \sum_{p_{d+1} = \lceil \alpha_{d+1}q \rceil}^{\lfloor \beta_{d+1}q \rfloor}{q^d \gamma^d} \\
&\phantom{==================}\geq C q^d \gamma^d (\beta_{d+1}q-\alpha_{d+1}q) \\
&\phantom{==================}\geq C q^{d+1} \gamma^{d+1}.
\end{align*}
This completes the proof of the lemma.
\end{proof}

\begin{lemma}\label{lem:counting}
  Suppose $\pi = (\pi_1, \dots, \pi_k)$ is a partition of $[m+n]$ such
  that for every $j=1,\dots,k$ we have $\abs{\pi_j}\geq 2$, as in
  Section~\ref{sec:partition-reduction}. Suppose there is some
  $\ell \in \{1,\dots,k\}$ such that
  $\pi_\ell\cap (m+[n])\neq \emptyset$ and $\abs{\pi_\ell}\geq
  3$. Then there exists a constant $C>0$ such that the following
  holds. For every $0<\gamma\leq 1$ there exists $Q_\gamma>0$, such
  that for every choice of $0\leq \alpha_i < \beta_i \leq 1$
  ($i=1, \dots, m$) with $\beta_i-\alpha_i = \gamma$, we have
  \begin{equation*}
    \sum_{\abs{\q}=q}\#\set*{\p \in P(\pi, \q): \forall i\in[m],\quad \alpha_i q \leq p_i \leq \beta_i q} \geq C \gamma^m q^{m+n-1}
  \end{equation*}
  as long as $q\geq Q_\gamma$. If no such $\ell$ exists, then the sum
  above is bounded below by $C\gamma^m q^{m + n-2}\varphi(q)$.
\end{lemma}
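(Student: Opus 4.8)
The plan is to reduce to Lemma~\ref{lem:funny} after factoring the count over the blocks of the partition. First I would observe that for $\q\in Q(\pi)$ the count factors over the blocks meeting $[m]$: since the defining conditions of $P(\pi,\q)$ are imposed on the pairwise disjoint coordinate sets $\pi_j\cap[m]$ ($1\le j\le b$), which partition $[m]$, and the conditions attached to blocks contained in $m+[n]$ hold automatically as $\q\in Q(\pi)$, one has $\#\set*{\p\in P(\pi,\q):\alpha_iq\le p_i\le\beta_iq\ \forall i}=\prod_{1\le j\le b}N_j(\q)$, where $N_j(\q)$ denotes the number of integer tuples $(p_i)_{i\in\pi_j\cap[m]}$ with $\alpha_iq\le p_i\le\beta_iq$ and $\gcd\bigl((p_i)_{i\in\pi_j\cap[m]},\pi_j(\q)\bigr)=1$.

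Next I would estimate the $N_j(\q)$ uniformly in $\q$, for $q\ge Q_\gamma$. If $\abs{\pi_j\cap[m]}\ge 2$, then replacing the coprimality condition by $\gcd\bigl((p_i)_{i\in\pi_j\cap[m]}\bigr)=1$ only weakens it, so $N_j(\q)\ge C\gamma^{\abs{\pi_j\cap[m]}}q^{\abs{\pi_j\cap[m]}}$ by Lemma~\ref{lem:primitivepoints}, uniformly in $\q$. If $\abs{\pi_j\cap[m]}=1$, say $\pi_j\cap[m]=\set{i}$, then $N_j(\q)=\#\set{p\in[\alpha_iq,\beta_iq]:\gcd(p,g_j)=1}$ with $g_j:=\gcd(\pi_j(\q))$; M\"obius inversion gives $N_j(\q)=\gamma q\,\varphi(g_j)/g_j+O(2^{\omega(g_j)})$, and since $1\le g_j\le q$ (here I restrict attention to those $\q$ with $g_j\ge1$ for every such $j$, which costs nothing below) the facts that the maximal order of $2^{\omega(g)}$ on $1\le g\le q$ is $q^{o(1)}$ and that $\varphi(g)/g\gg 1/\log\log q$ there yield $N_j(\q)\ge\tfrac12\gamma q\,\varphi(g_j)/g_j$ for $q\ge Q_\gamma$.

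Multiplying these bounds over $1\le j\le b$ and using that $\bigsqcup_j(\pi_j\cap[m])=[m]$---so that $\sum_{\abs{\pi_j\cap[m]}\ge 2}\abs{\pi_j\cap[m]}+\#\set{j:\abs{\pi_j\cap[m]}=1}=m$, which is simultaneously the total power of $\gamma$ and of $q$ produced---gives, for every $\q$ with $\abs{\q}=q$, $\q\in Q(\pi)$, and all $g_j\ge1$,
\[
  \#\set*{\p\in P(\pi,\q):\alpha_iq\le p_i\le\beta_iq\ \forall i}\ \ge\ C'\gamma^m q^m\prod_{\substack{1\le j\le k\\ \abs{\pi_j\cap[m]}=1}}\frac{\varphi(g_j)}{g_j}.
\]
Summing over such $\q$ and invoking Lemma~\ref{lem:funny} (the discarded $\q$ with some $g_j=0$ do not affect its lower bound) then gives $\sum_{\abs{\q}=q}(\cdots)\ge C'\gamma^m q^m\cdot Cq^{n-1}=C''\gamma^m q^{m+n-1}$; when no block $\pi_\ell$ with $\pi_\ell\cap(m+[n])\ne\emptyset$ and $\abs{\pi_\ell}\ge 3$ exists, Lemma~\ref{lem:funny} supplies $q^{n-2}\varphi(q)$ in place of $q^{n-1}$, yielding the second bound $C''\gamma^m q^{m+n-2}\varphi(q)$. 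Since the constants in Lemma~\ref{lem:primitivepoints} and Lemma~\ref{lem:funny}, as well as that in the M\"obius estimate, are all independent of $\gamma$, so is $C''$, and only the threshold $Q_\gamma$ depends on $\gamma$.

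The step I expect to be the main obstacle is the uniform estimate $N_j(\q)\ge\tfrac12\gamma q\,\varphi(g_j)/g_j$: one must make the M\"obius error term $O(2^{\omega(g_j)})$ negligible uniformly over all $g_j\le q$ and over the location of the interval $[\alpha_iq,\beta_iq]$, which is where the divisor and totient size bounds enter (a uniform analogue of the error control already used in the proof of Lemma~\ref{lem:niederreiter}). The rest---the block factorization, the elementary count from Lemma~\ref{lem:primitivepoints}, and the final substitution into Lemma~\ref{lem:funny}---is bookkeeping.
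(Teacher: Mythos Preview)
Your proposal is correct and follows essentially the same route as the paper: factor the count over the blocks $j\le b$, bound each factor $N_j(\q)$ from below (using Lemma~\ref{lem:primitivepoints} when $\abs{\pi_j\cap[m]}\ge 2$, and a Farey-type count when $\abs{\pi_j\cap[m]}=1$), multiply, and feed the result into Lemma~\ref{lem:funny}. The only cosmetic difference is in the singleton case: the paper rescales the interval and cites Lemma~\ref{lem:niederreiter}, whereas you run the M\"obius inversion directly and control the error $O(2^{\omega(g_j)})$ uniformly over $g_j\le q$ via the divisor bound and the minimal order of $\varphi(g)/g$---your treatment is in fact slightly more careful about uniformity in $g_j$, but the two arguments are the same in substance.
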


\begin{proof}
  Suppose $\q\in Q(\pi)$ with $\abs{\q}=q$. Let us estimate
  \begin{equation*}
    \#\set*{\p \in P(\pi, \q): \forall i\in[m],\quad \alpha_i q \leq p_i \leq \beta_i q}.
  \end{equation*}
  We will do this by analysing our freedom of choice of $\p$ in the
  different components defined by~$\pi$. For $j\in [1, b]$, let
  \begin{equation*}
    N_j(\q) = \#\set*{\p \in \ZZ^{\abs{\pi_j\cap[m]}} : \gcd(\p, \pi_j(\q))=1,\quad \alpha_i q \leq p_i \leq \beta_i q \quad (i=1, \dots, \abs{\pi_j\cap[m]})},
  \end{equation*}
  and note that for $j\in (a,b]$ this is equivalent to
  \begin{equation*}
    N_j(\q) = \#\set*{\p \in \ZZ^{\abs{\pi_j}} : \gcd(\p)=1,\quad \alpha_i q \leq p_i \leq \beta_i q\quad (i=1, \dots, \abs{\pi_j})}.
  \end{equation*}
  Then we have
  \begin{equation*}
    \#\set*{\p \in P(\pi, \q): \forall i\in[m],\quad \alpha_i q \leq p_i \leq \beta_i q} = \prod_{j=1}^bN_j(\q).
  \end{equation*}

  For $j\in (a, b]$, Lemma~\ref{lem:primitivepoints} tells us that there
  exists $C_j, Q_{j,\gamma}>0$ such that
  \begin{equation*}
    N_j(\q) \geq C_jq^{\abs{\pi_j}}\gamma^{\abs{\pi_j}}
  \end{equation*}
  whenever $q\geq Q_{j,\gamma}$. 

  For $j \in [1,a]$, we consider two sub-cases. First, if $\abs{\pi_j\cap[m]}\geq 2$, then
  \begin{equation*}
    N_j(\q) \geq \#\set*{\p \in \ZZ^{\abs{\pi_j\cap[m]}} : \gcd(\p)=1,\quad \alpha_i q \leq p_i \leq \beta_i q \quad (i=1, \dots, \abs{\pi_j\cap[m]})},
  \end{equation*}
  so, as above, Lemma~\ref{lem:primitivepoints} tells us that there exists
  $C_j, Q_{j,\gamma}>0$ such that
  \begin{equation*}
    N_j(\q) \geq C_j \gamma^{\abs{\pi_j\cap [m]}} q^{\abs{\pi_j\cap[m]}}
  \end{equation*}
  whenever $q\geq Q_{j,\gamma}$. Otherwise, we have
  $\abs{\pi_j\cap[m]} = 1$, in which case,
  \begin{align*}
    N_j(\q) &= \#\{p \in \Z: \gcd(p,\pi_j(\q))=1, \quad \alpha_iq \leq p_i \leq \beta_i(q) \quad (i=1,\dots,|\pi_j \cap [m]|)\} \\
            &= \#\{p \in \Z: \gcd(p,\gcd(\pi_j(\q)))=1, \quad \alpha_iq \leq p_i \leq \beta_i(q) \quad (i=1,\dots,|\pi_j \cap [m]|)\} \\
            &= \#\left\{p \in \Z: \gcd(p,\gcd(\pi_j(\q)))=1, \phantom{\left(\frac{\alpha_iq}{\gcd(\pi_j(\q)}\right)}\right.\\
            &\phantom{=======}\left. \left(\frac{\alpha_iq}{\gcd(\pi_j(\q))}\right)\gcd(\pi_j(\q)) \leq p_i \leq \left(\frac{\beta_iq}{\gcd(\pi_j(\q))}\right)\gcd(\pi_j(\q)) \quad (i=1,\dots,|\pi_j \cap [m]|)\right\}.
\end{align*}
Thus, by Lemma~\ref{lem:niederreiter}, there exists
$C_j, Q_{j,\gamma}>0$ such that
  \begin{equation*}
    N_j(\q) \geq C_j \gamma q \frac{\varphi(\gcd(\pi_j(\q)))}{\gcd(\pi_j(\q))}
  \end{equation*}
  for all $q \geq Q_{j,\gamma}$.

  We conclude that there exists $C>0$ and $Q_\gamma>0$ such that
  \begin{equation*}
    \prod_{j=1}^bN_j(\q) \geq C\gamma^m q^m \prod_{\abs{\pi_j\cap[m]}=1}\frac{\varphi(\gcd(\pi_j(\q)))}{\gcd(\pi_j(\q))}
  \end{equation*}
  holds for all $q\geq Q_\gamma$. Therefore, by Lemma~\ref{lem:funny}, 
  \begin{align*}
    \sum_{\substack{\abs{\q} = q \\ \q \in Q(\pi)}} \prod_{j=1}^bN_j(\q)
    &\geq C\gamma^m q^m \sum_{\substack{\abs{\q} = q \\ \q \in Q(\pi)}} \prod_{\abs{\pi_j\cap[m]}=1}\frac{\varphi(\gcd(\pi_j(\q)))}{\gcd(\pi_j(\q))}\\[1ex]
    &\geq
      \begin{cases}
        C \gamma^m q^{m + n-1} &\textrm{if $\ell$ exists,} \\[1ex]
        C \gamma^m q^{m + n-2} \varphi(q) &\textrm{if not,}
      \end{cases}
  \end{align*}
and the lemma is proved. 
\end{proof}

\section{Uniformity}

Suppose $n,m\in\NN$. For $\q \in \ZZ^n$ and a ball
  $B \subset \RR^m$, let
\begin{equation*}
  A_{n,m}(\q, B) = A(\q, B)= \set*{\bx \in \I^{nm} : \exists \p\in\ZZ^m,\quad \q\bx +\p \in B}
\end{equation*}
and
\begin{equation*}
  A_{n,m}^\pi(\q, B) =  A^\pi(\q, B) = \set*{\bx\in \I^{nm} : \exists \p\in\ZZ^m,\quad (\p,\q) \in P(\pi), \quad \q\bx +\p \in B}
\end{equation*}
and
\begin{equation*}
  A_{n,m}'(\q, B) = A'(\q, B) = \set*{\bx\in \I^{nm} : \exists \p\in\ZZ^m, \forall i\in[m],\quad \gcd(p_i,\q)=1, \quad \q\bx +\p \in B}.
\end{equation*}
Subscripts will be dropped when the context is clear.  Notice that
\begin{equation}\label{eq:Ameasure}
  \abs{A(\q, B)} = \min\set{ \abs{B}, 1}.
\end{equation}
Suppose $(B_q)_{q\in\NN}$ is a
sequence of balls in $\RR^m$. For $\q \in \ZZ^n$, we will write
\begin{equation*}
  A(\q) = A(\q, B_{\abs{\q}}), \quad
  A^\pi(\q) = A^\pi(\q, B_{\abs{\q}}),\quad \text{and} \quad
  A'(\q) = A'(\q, B_{\abs{\q}}). 
\end{equation*}
Theorems~\ref{cor:monotone},~\ref{cor:nonmonotone}, and~\ref{cor:idsc}
can be phrased in terms of sets of the type we have just defined. It
is therefore important for us to establish some lemmas regarding the
measures of these sets.

The following lemmas show that for partitions $\pi$ as in our main
theorem statements, $A^\pi$ and $A$ have comparable measures, even
when intersected with arbitrary open sets, and that the same goes for
$A'$ and $A$.

\begin{lemma}\label{lem:uniformity}
  Suppose $\pi = (\pi_1, \dots, \pi_k)$ is a partition of $[m+n]$ such
  that for every $j=1,\dots,k$ we have
  $\abs{\pi_j}\geq 2$, as in Section~\ref{sec:partition-reduction}.
  \begin{enumerate}[a)]
  \item Suppose there is some $\ell \in \{1,\dots, k\}$
    such that $\pi_\ell\cap (m+[n])\neq \emptyset$ and
    $\abs{\pi_\ell}\geq 3$.  Then there exists a constant
    $C:=C_{\pi}>0$ such that for every open set $U\subset \I^{nm}$
    there is some $Q_U>0$ such that for all $q \geq Q_U$ we have
    \begin{equation*}
      \sum_{\abs{\q}=q}\abs*{A^\pi(\q, B) \cap U} \geq C \sum_{\abs{\q}=q}\abs*{A(\q,B)}\abs*{U}
    \end{equation*}
    for every ball $B\subset \RR^m$.
    
  \item If, on the other hand, no such
    $\ell$ exists, but the measures $\abs{B_q}$ are non-increasing, then
    \begin{equation*}
      \sum_{\abs{\q}\leq Q}\abs*{A^\pi(\q, B_{\abs{\q}}) \cap U} \geq C \sum_{\abs{\q}\leq Q}\abs*{A(\q,B_{\abs{\q}})}\abs*{U}
    \end{equation*}
    for all $Q$ sufficiently large.
  \end{enumerate}
\end{lemma}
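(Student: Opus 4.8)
The plan is to deduce Lemma~\ref{lem:uniformity} from the counting estimate in Lemma~\ref{lem:counting} by a slicing/rescaling argument. The key observation is that $A^\pi(\q,B)$ is, up to a measure-zero set, a disjoint union over admissible $\p$ of the affine slabs $\set{\bx\in\I^{nm} : \q\bx+\p\in B}$, and each such slab is a translate (modulo $\ZZ^{nm}$) of a fixed set depending only on $\q$ and $B$. More precisely, fixing a coordinate $i_0$ where $\abs{\q}$ is attained, one slices $\I^{nm}$ along the coordinate hyperplanes of the $i_0$-th row of $\bx$; on each such slice the condition $\q\bx+\p\in B$ becomes a condition placing a single real coordinate in an interval of length $\asymp \abs{B}^{1/m}/\abs{\q}$ (after accounting for the shape of the ball $B$ in the other $m-1$ directions), so that the indices $p_{i_0}$ contributing to $A^\pi(\q,B)$ are exactly those landing in an interval of the form $[\alpha_{i_0}\abs{\q},\beta_{i_0}\abs{\q}]$ with $\beta_{i_0}-\alpha_{i_0}=\gamma$ for a suitable $\gamma$ comparable to $\abs{B}^{1/m}$. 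Doing this simultaneously in all $m$ rows reduces the lower bound for $\abs{A^\pi(\q,B)}$ (and, summed over $\abs{\q}=q$, for $\sum_{\abs{\q}=q}\abs{A^\pi(\q,B)\cap U}$) to counting the number of $\p\in P(\pi,\q)$ with each $p_i$ in a prescribed interval of length $\gamma\abs{\q}$, which is exactly the quantity estimated in Lemma~\ref{lem:counting}.

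First I would make the reduction to a box $U$ precise: since any open $U\subset\I^{nm}$ contains a box of the form $\prod(\alpha_i^{(t)},\beta_i^{(t)})$ with all side-lengths equal to some $\gamma_U>0$, and since both sides of the claimed inequality are monotone under shrinking $U$ (the right side scales like $\abs{U}$, which only helps), it suffices to prove the inequality when $U$ is such a box; the general case follows by exhausting $U$ from inside by disjoint translates of such boxes and absorbing the loss into the constant $C$. Next, for such a box $U$ and a ball $B$, I would write
\[
\abs{A^\pi(\q,B)\cap U} \;\gg\; \#\set*{\p\in P(\pi,\q) : \alpha_i' q \le p_i \le \beta_i' q \ \ (i=1,\dots,m)}\cdot \gamma_B^m,
\]
where $\gamma_B\asymp\min\set{\abs{B}^{1/m},1}$ captures the measure of a single slab, and the intervals $[\alpha_i',\beta_i']$ are the images (under the linear change of variables determined by $\q$ and $B$) of the sides of $U$, all of common length $\gamma\asymp\gamma_U$. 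Summing over $\abs{\q}=q$ and applying Lemma~\ref{lem:counting} gives
\[
\sum_{\abs{\q}=q}\abs*{A^\pi(\q,B)\cap U} \;\gg\; \gamma_B^m\cdot \gamma^m q^{m+n-1} \;\asymp\; \abs{B}\, q^{n-1}\,\abs{U},
\]
using $\gamma^m\asymp\abs{U}$ and $\gamma_B^m\asymp\abs{B}$ (in the regime $\abs{B}\le 1$; the case $\abs{B}>1$ is handled by noting $\abs{A(\q,B)}=1$ and truncating $B$ to a subball of measure $1$). On the other side, by~\eqref{eq:Ameasure} we have $\sum_{\abs{\q}=q}\abs{A(\q,B)} \asymp \abs{B}\,q^{n-1}$ (again in the regime $\abs{B}\le 1$, and $\asymp q^{n-1}$ otherwise), so the two sides are comparable and part~(a) follows with $Q_U := \max_t Q_{\gamma_U}$ from Lemma~\ref{lem:counting} (which must be taken large enough that the counting estimate holds for every one of the finitely many translates of the box tiling $U$).

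For part~(b), when no such $\ell$ exists, the per-sphere estimate from Lemma~\ref{lem:counting} degrades by a factor $\varphi(q)/q$, and $\varphi(q)/q$ does not stay bounded below, so the sphere-by-sphere comparison can fail. The remedy is to sum over the ball $\abs{\q}\le Q$ rather than the sphere $\abs{\q}=q$: by partial summation (or just by grouping the radii dyadically) one has $\sum_{q\le Q} q^{m+n-2}\varphi(q)\,\abs{B_q} \gg Q^{m+n-1}\,\abs{B_Q}$ when $\abs{B_q}$ is non-increasing, since $\sum_{q\le Q}q^{m+n-2}\varphi(q)\asymp Q^{m+n-1}$ and the monotonicity lets one compare $\sum_{q\le Q}q^{m+n-2}\varphi(q)\abs{B_q}$ to $\abs{B_Q}\sum_{q\le Q}q^{m+n-2}\varphi(q)$ from below on a terminal block of the range; meanwhile $\sum_{q\le Q}\abs{A(\q,B_{\abs{\q}})}\asymp\sum_{q\le Q}q^{n-1}\abs{B_q}\ll Q^{n-1}\abs{B_{Q_0}}$ is controlled from above in the same way, and one checks the two match up to a constant. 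The main obstacle in the whole argument is bookkeeping the linear change of variables cleanly: one has to verify that slicing $\I^{nm}$ by the row of $\bx$ where $\abs{\q}$ is attained really does turn ``$\q\bx+\p\in B$'' into ``$p_i$ lies in an interval of length $\gamma_B q$ whose endpoints are affine in the box coordinates of $U$,'' uniformly in $\q$ and in the location of the box, so that Lemma~\ref{lem:counting} applies with a single $Q_\gamma$; everything else is routine.
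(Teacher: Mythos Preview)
Your strategy is exactly the paper's: reduce from an arbitrary open $U$ to a single small box $W$ of side~$\gamma$ (the paper uses balls, which in the max norm are boxes), slice along the row of~$\bx$ where $\abs{\q}$ is attained so that each slab $\set{\bx\in W:\q\bx+\p\in B}$ becomes a product of intervals, count admissible $\p\in P(\pi,\q)$ in a translated box of side $\asymp\gamma q$ via Lemma~\ref{lem:counting}, and for part~(b) recover the lost factor $\varphi(q)/q$ by partial summation against the non-increasing $\abs{B_q}$.

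That said, your dimension bookkeeping is genuinely wrong, not just unwritten. A single slab $\set{\bx\in W:\q\bx+\p\in B}$ has measure $\asymp(\abs{B}^{1/m}/q)^m\cdot\gamma^{m(n-1)}=\abs{B}\,q^{-m}\gamma^{m(n-1)}$, since each of the $m$ linear constraints cuts out thickness $\abs{B}^{1/m}/q$ in one direction and leaves the remaining $n-1$ directions (each of length $\gamma$) free; it is not $\gamma_B^m\asymp\abs{B}$. Likewise $\abs{W}=\gamma^{nm}$, not $\gamma^m$. With the correct factors the chain reads
\[
\sum_{\abs{\q}=q}\abs*{A^\pi(\q,B)\cap W}\;\gg\;\frac{\abs{B}}{q^{m}}\,\gamma^{m(n-1)}\cdot C\gamma^{m}q^{m+n-1}\;=\;C\,\abs{B}\,\gamma^{nm}q^{n-1}\;\asymp\;\abs{W}\sum_{\abs{\q}=q}\abs*{A(\q,B)},
\]
which is what is wanted; as you wrote it, the asserted relation $\gamma_B^m\cdot\gamma^m q^{m+n-1}\asymp\abs{B}\,q^{n-1}\abs{U}$ is off by a factor of $q^{m}$. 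For part~(b), the estimate you need is not $\sum_{q\le Q}q^{m+n-2}\varphi(q)\abs{B_q}\gg Q^{m+n-1}\abs{B_Q}$ (which is both the wrong exponent and not sufficient) but rather
\[
\sum_{q\le Q}q^{n-2}\varphi(q)\abs{B_q}\;\gg\;\sum_{q\le Q}q^{n-1}\abs{B_q},
\]
which follows by Abel summation from $\sum_{q\le Q}q^{n-2}\varphi(q)\asymp\sum_{q\le Q}q^{n-1}$ and the monotonicity of $\abs{B_q}$; this is precisely how the paper closes the argument.
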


\begin{lemma}\label{lem:uniformitydsc}
  Let $n>2$. There exists a constant $C>0$ such that for every open set
  $U\subset \I^{nm}$ there is some $Q_U>0$ such that for all
  $q \geq Q_U$ we have
  \begin{equation*}
    \sum_{\abs{\q}=q}\abs*{A'(\q, B) \cap U} \geq C \sum_{\abs{\q}=q}\abs*{A(\q,B)}\abs*{U}
  \end{equation*}
  for every ball $B\subset \RR^m$.
\end{lemma}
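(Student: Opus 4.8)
The plan is to mirror the structure of Lemma~\ref{lem:uniformity}(a), replacing the partition-based counting of Lemma~\ref{lem:counting} with a direct count of integer vectors $\p\in\ZZ^m$ satisfying $\gcd(p_i,\q)=1$ for each $i$. The key observation is that for a fixed $\q$ with $\abs{\q}=q$, the set $A'(\q,B)$ is a disjoint union (modulo boundary overlaps of measure zero) of translates, inside $\I^{nm}$, of a fixed set $A_0(\q,B)$ of measure $\asymp\abs{B}/q^m$ (when $\abs{B}\le q^m$), one translate for each admissible residue class of $\p$ modulo $q$; more precisely, writing $\bx$ in rows, the condition $\q\bx+\p\in B$ partitions $\I^{nm}$ according to the value of $\p$, and $\bx\mapsto \q\bx \bmod 1$ distributes these pieces uniformly. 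Thus $\abs{A'(\q,B)\cap U}$ is controlled from below by the number of admissible $\p$ in a box of side $\asymp q$ whose coordinates, reduced mod $q$, are coprime to $q$, multiplied by the measure of the corresponding small translate intersected with $U$.

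Concretely, first I would fix a ball $B\subset\RR^m$; by~(\ref{eq:Ameasure}) it suffices to treat $\abs{B}\le 1$, and after splitting into sub-balls one may further assume $\abs{B}\le q^m$ so that $\abs{A(\q,B)}=\abs{B}$. Next, for $0\le\alpha_i<\beta_i\le 1$ with $\beta_i-\alpha_i=\gamma$ (a parameter chosen small depending on $U$, so that a sub-box of $U$ of the form $\prod_i[\alpha_i',\beta_i']$ in the relevant coordinates fits inside $U$), I would count
\[
  \#\set*{\p\in\ZZ^m : \forall i,\ \gcd(p_i,\q)=1,\ \alpha_i q\le p_i\le\beta_i q}
  = \prod_{i=1}^m \#\set*{p_i : \gcd(p_i,\q)=1,\ \alpha_i q\le p_i\le \beta_i q}.
\]
Here $\gcd(p_i,\q)=1$ is equivalent to $\gcd(p_i,\gcd(\q))=1$, so each factor is a count of integers in an interval of length $\gamma q$ coprime to the single modulus $\gcd(\q)$; invoking Lemma~\ref{lem:niederreiter} with modulus $\gcd(\q)$ (rescaled), each factor is $\asymp\gamma q\,\varphi(\gcd(\q))/\gcd(\q)$ once $q$ is large. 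Summing the product over $\abs{\q}=q$ and using the first estimate of Lemma~\ref{lem:gcds} applied in $\ZZ^n$ (which, since $n>2\ge 2$, gives $\sum_{\abs{\q}\le q}(\varphi(\gcd(\q))/\gcd(\q))^m\gg q^n$, hence $\sum_{\abs{\q}=q}\gg q^{n-1}$), I get a lower bound $\gg\gamma^m q^{m+n-1}$ for the total number of admissible $(\p,\q)$. Translating this count back into a measure statement via the uniform-distribution picture of the previous paragraph, and comparing with $\sum_{\abs{\q}=q}\abs{A(\q,B)}\abs{U}\asymp q^{n-1}\abs{B}\abs{U}$, yields the claimed inequality with a constant depending only on $n,m$ (the $\gamma$-dependence being absorbed into $Q_U$, exactly as in Lemma~\ref{lem:uniformity}(a)).

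The main obstacle I expect is the bookkeeping that converts the integer count into the measure bound: one must check that for distinct admissible $\p$ in the box of side $\gamma q$, the corresponding subsets of $\I^{nm}$ on which $\q\bx+\p\in B$ are essentially disjoint and each contributes measure $\asymp\abs{B}/q^m$ of overlap with (a fixed sub-box of) $U$, uniformly in $\q$ with $\abs{\q}=q$. This is where the hypothesis $n>2$ really enters, exactly as in Lemma~\ref{lem:uniformity}: one needs enough coordinates $q_j$ so that, as $\p$ ranges over the admissible box, the points $q\bx$ (for $\bx$ in a sub-box of $U$) sweep out all of the relevant torus uniformly — this is the same mechanism underlying Lemma~\ref{lem:niederreiter}'s appearance and the dimensional restriction in the Duffin--Schaeffer setting. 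Granting the analogous uniform-distribution lemma (which can be extracted from the arguments already used to prove Lemma~\ref{lem:uniformity}, with $P(\pi,\q)$ replaced by $\set{\p:\gcd(p_i,\q)=1\ \forall i}$), the rest is a routine combination of Lemmas~\ref{lem:niederreiter} and~\ref{lem:gcds}.
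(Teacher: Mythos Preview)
Your overall strategy coincides with the paper's: reduce $U$ to a ball $W$ of radius $\gamma$, factor $A'(\q,B)\cap W$ as a product over the $m$ columns, apply a Fubini/cross-section argument together with Lemma~\ref{lem:niederreiter} to bound each factor from below by $\asymp \gamma^{n-1}\cdot\gamma q\,\dfrac{\varphi(\gcd(\q))}{\gcd(\q)}\cdot\dfrac{\abs{B}^{1/m}}{q}$, and then sum over $\abs{\q}=q$.

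Two points in your write-up would not go through as stated, though neither is fatal to the approach. First, the sphere estimate $\sum_{\abs{\q}=q}\bigl(\varphi(\gcd(\q))/\gcd(\q)\bigr)^m\gg q^{n-1}$ cannot be read off from Lemma~\ref{lem:gcds}: that lemma treats the first power over a \emph{ball}, and a ball bound $\sum_{\abs{\q}\le q}\gg q^n$ does not in general imply the corresponding sphere bound. The paper argues on the sphere directly: fix one coordinate of $\q$ equal to $q$, restrict the remaining $n-1$ coordinates to be a primitive vector (so that $\gcd(\q)=1$ and the ratio equals $1$), and invoke Lemma~\ref{lem:primitiveball} in dimension $n-1$. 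Second, \emph{this} is precisely where the hypothesis $n>2$ is used, since Lemma~\ref{lem:primitiveball} needs $D=n-1\ge 2$. The measure-to-count conversion you flag as the ``main obstacle'' is in fact a routine Fubini argument valid for every $n\ge 1$ and does not use $n>2$ at all.
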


\begin{proof}[Proof of Lemma~\ref{lem:uniformity}]
  First, find a finite union $V$ of disjoint balls contained in $U$
  such that $\abs{V}\geq \abs{U}/2$. (In principle, we could get as
  close to the measure of $U$ as we want.) Without loss of generality,
  we assume all the balls in $V$ have the same radius, $\gamma>0$.

  Now let $W\subset \I^{nm}$ be any ball of radius $\gamma$. It will be
  enough to show that
  \begin{equation} \label{eq:mixingballssubcase}
    \sum_{\abs{\q}=q}\abs*{A^\pi(\q, B) \cap W} \geq C' \sum_{\abs{\q}=q}\abs*{A (\q,B)}\abs*{W}
  \end{equation}
  for all $q\geq Q_\gamma$, where $C'>0$ is some absolute constant which
  may depend on $n,m$, and $\pi$, and $Q_\gamma$ only depends on
  $\gamma$. Importantly for us, $Q_\gamma$ does not depend on $B$ and so, given
  \eqref{eq:mixingballssubcase}, one can deduce the lemma with
  $C = C'/2$.

  Let us write $\x \in \I^{nm}$ as $\x = (\x_1, \x_2,\cdots, \x_m)$
  where $\x_j$ are column vectors. Then for any $\q\in Q(\pi)\subseteq \ZZ^n$ we have
  that
  \begin{equation*}
    \q\x = (\q\cdot\X_1, \dots, \q\cdot \X_a, \q\cdot \X_{a+1}, \dots, \q\cdot \X_b) \in \I^m,
  \end{equation*}
  where $\X_1, \dots, \X_a$ are the projections to the components
  corresponding to $\pi_1\cap[m], \dots, \pi_a\cap[m]$, respectively,
  and $\X_{a+1}, \dots, \X_b$ are the projections to the components
  corresponding to $\pi_{a+1}, \dots, \pi_b$, respectively. For
  $j \in [1,a]$, $\X_j$ is an $n\times \abs{\pi_j\cap[m]}$-matrix, and
  for $j\in(a,b]$, $\X_j$ is an $n\times \abs{\pi_j}$-matrix.

  The condition that
  \begin{equation*}
    \q\x + \p\in B\qquad\textrm{for}\qquad\p=(p_1, \dots, p_m)\in P(\pi,\q)
  \end{equation*}
  is equivalent to $\q\cdot \X_j + \pi_j(\p) \in \pi_j(B)$ for each $j$. Therefore, we
  have
  \begin{equation}\label{eq:product}
    A_{n,m}^\pi(\q,B)\cap W = \parens*{\prod_{j=1}^b A_{n,\abs{\pi_j\cap[m]}}^\pi(\q, \pi_j(B))}\cap W =  \prod_{j=1}^b A_{n,\abs{\pi_j\cap[m]}}^\pi(\q, \pi_j(B))\cap W_j, 
  \end{equation}
  noting that when $j\in (a,b]$ we have $\pi_j\cap[m]=\pi_j$, and
  where $W_j$ is the projection of $W$ to the copy of
  $\I^{n(\abs{\pi_j\cap[m]})}$ corresponding to the $j$th component of
  the above product.   

  Let us now find the ($n(\abs{\pi_j\cap [m]})$-dimensional Lebesgue)
  measure of
  \begin{equation*}
    A_{n,\abs{\pi_j\cap[m]}}^\pi(\q, \pi_j(B))\cap W_j.
  \end{equation*}
  We will accomplish the task in one fell swoop, but it is worth
  bearing in mind that there are two interpretations of what follows,
  depending on whether $j\in [1,a]$ or $j\in (a,b]$. In the latter case,
  we have $\pi_j\cap[m] = \pi_j$ and the condition
  $\gcd(\p,\pi_j(\q))$ is the same as $\gcd(\p)=1$.
  
  Proceeding, we have
  \begin{equation*}
    A_{n,\abs{\pi_j\cap[m]}}^\pi(\q, \pi_j(B))\cap W_j = \set*{\x \in \I^{n\abs{\pi_j\cap[m]}} : \exists \p\in\ZZ^{\abs{\pi_j\cap[m]}}, \quad \gcd(\p,\pi_j(\q))=1, \quad \q\x - \p \in \pi_j(B)},
  \end{equation*}
  having noted that in this case the condition $\p\in P(\pi_j, \q)$ is
  $\gcd(\p, \pi_j(\q))=1$.  Now we express points
  $\x\in\I^{n\abs{\pi_j\cap[m]}}$ as matrices with columns
  $\x = (\x_1, \x_2, \dots, \x_{\abs{\pi_j\cap[m]}})$ (having relabelled
  coordinates), so that the condition of interest is that
  $\q\cdot \x_i - p_i$ lies in an interval of side-length $2r$, the
  projection of $W_j$ to the $i$th coordinate.  Suppose that
  $\abs{\q}$ is achieved in the first coordinate, $q_1=q$. For any
  $\bz\in\I^{(n-1)\abs{\pi_j\cap[m]}}$ (representing the coordinates in the
  rows $2, \dots, n$ of $\I^{n\abs{\pi_j\cap[m]}}$), let
  \begin{equation*}
    S_\bz = \parens*{A_{n,\abs{\pi_j\cap[m]}}^\pi(\q, \pi_j(B))\cap W_j}_\bz = A_{n,\abs{\pi_j\cap[m]}}^\pi(\q, \pi_j(B))_\bz\cap (W_j)_\bz
  \end{equation*}
  be the cross-section through $\bz$ parallel to the
  $\abs{\pi_j\cap[m]}$-dimensional space spanned by the coordinates in the
  first row. Then
  \begin{align*}
    \abs*{A_{n,\abs{\pi_j\cap[m]}}^\pi(\q, \pi_j(B))\cap W_j} &= \int_{\I^{(n-1)\abs{\pi_j\cap[m]}}}\abs{S_\bz}\,d\bz \\
    &= \int_{Y_j}\abs{S_\bz}\,d\bz
  \end{align*}
  where $Y_j$ is the projection of $W_j$ to the last $n-1$ rows'
  coordinates. Meanwhile, $(W_j)_\bz$ is a $\abs{\pi_j\cap[m]}$-dimensional
  ball and
  \begin{align*}
    A_{n,\abs{\pi_j\cap[m]}}^\pi(\q, \pi_j(B))_\bz
    &= \set*{\x \in \I^{\abs{\pi_j\cap[m]}} : \exists \p\in \ZZ^{\abs{\pi_j\cap[m]}}, \quad \gcd(\p,\pi_j(\q))=1, \quad \q\binom{\x}{\bz} - \p \in \pi_j(B)}\\
    &= \set*{\x \in \I^{\abs{\pi_j\cap[m]}} : \exists \p\in \ZZ^{\abs{\pi_j\cap[m]}}, \quad \gcd(\p, \pi_j(\q))=1, \quad q\x + \q\binom{\0}{\bz} - \p \in \pi_j(B)}.
  \end{align*}
  It is a union of disjoint balls of diameter
  $\frac{\abs{B}^{1/m}}{q}$ with centers at the points $\frac{\p}{q}$
  such that $\gcd(\p, \pi_j(\q))=1$ in
  $(\I^{nm})_{\bz} \cong \I^{\abs{\pi_j\cap[m]}}$, translated by
  $\q \binom{\0}{\bz} + \operatorname{center}(\pi_j(B))$. Let
  $N_j(\q, W)$ be the number of such center points which are also
  contained in the ball $\frac{1}{2}W_j$ in $\I^{\abs{\pi_j\cap[m]}}$,
  that is,
  \begin{equation*}
    N_j(\q, W) = \#\set*{\p \in \ZZ^{\abs{\pi_j\cap[m]}} : \gcd(\p, \pi_j(\q))=1,\quad \p/q \in \frac{1}{2}W_j + t},
  \end{equation*}
  where $t$ is the translation vector as above; its precise value
  does not matter.  The reason for bringing our attention to the
  shrunken ball $\frac{1}{2}W_j$ is that each relevant $\p/q$ is the
  center of a diameter-$\abs{B}^{1/m}/q$ sub-ball of
  $A_{n,\abs{\pi_j\cap[m]}}^\pi(\q, \pi_j(B))_\bz$ which is fully contained in
  $W_j$. We can therefore bound
  \begin{equation*}
   \abs{S_\bz} \geq N_j(\q, W) \frac{\abs{B}^{\abs{\pi_j\cap[m]}/m}}{q^{\abs{\pi_j\cap[m]}}} 
  \end{equation*}
  and
  \begin{align*}
    \abs*{A_{n,\abs{\pi_j\cap[m]}}^\pi(\q, \pi_j(B))\cap W_j} &= \int_{Y_j}\abs{S_\bz}\,d\bz \\
                                                         &\geq N_j(\q, W)  \frac{\abs{B}^{\abs{\pi_j\cap[m]}/m}}{q^{\abs{\pi_j\cap[m]}}}\int_{Y_j} d\bz \\
                                                         &= N_j(\q, W)  \frac{\abs{B}^{\abs{\pi_j\cap[m]}/m}}{q^{\abs{\pi_j\cap[m]}}}\abs{Y_j} \\
                                                         &= N_j(\q, W)  \frac{\abs{B}^{\abs{\pi_j\cap[m]}/m}}{q^{\abs{\pi_j\cap[m]}}}(2\gamma)^{(n-1)\abs{\pi_j\cap[m]}}
  \end{align*}
  Note that the argument in this paragraph did not depend on the
  supposition that $\abs{\q}$ was achieved in the first of the $n$
  coordinates, so the measure calculation would have come out the same
  regardless.
  
  Now, combining~(\ref{eq:product}) with the calculation above, we have
  \begin{align*}
    \abs*{A^\pi(\q,B)\cap W} &= \prod_{j=1}^{b}\abs*{{A_{n,|\pi_j \cap [m]|}^{\pi}(\q,\pi_j(B)}) \cap W_j} \\
                             &\geq \frac{\abs{B}}{q^{m}}(2\gamma)^{m(n-1)} \prod_{j=1}^b N_j(\q, W).
  \end{align*}
  Finally,  Lemma~\ref{lem:counting} tells us that
  \begin{equation*}
    \sum_{\abs{\q}=q}\prod_{j=1}^b N_j(\q, W) \geq
    \begin{cases}
      C \gamma^m q^{m+n-1} \\[1ex]
      C \gamma^m q^{m+n-2}\varphi(q),
    \end{cases}
  \end{equation*}
  as long as $q \geq Q_\gamma$, depending on whether there exists
  $\ell$ as in the lemma's statement. If there does exist such an
  $\ell$, then for $q\geq Q_\gamma$ we will have
  \begin{equation*}
    \sum_{\abs{\q}=q}\abs*{A^\pi(\q,B)\cap W} \geq \bar C \abs{B}\gamma^{mn}q^{n-1}\overset{(\ref{eq:Ameasure})}{\gg} \sum_{\abs{\q}=q}\abs*{A(\q,B)}\abs*{W},
  \end{equation*}
  where $\bar C$ is a constant absorbing $C$ and all of the powers of
  $2$ appearing above. If there does not exist such an~$\ell$, but
  $\abs{B_q}$ is non-increasing, then for large $Q$ we will have
  \begin{align*}
    \sum_{\abs{\q}\leq Q}\abs*{A^\pi(\q,B_{\abs{\q}})\cap W}
    &\geq \sum_{q\leq Q}\bar C \abs{B_q}\gamma^{mn}q^{n-2}\varphi(q) \\
    &\gg  \sum_{q\leq Q} \abs{B_q}\gamma^{mn}q^{n-1}.
  \end{align*}
  The last estimate comes from the fact that the average order of
  $q^{n-2}\varphi(q)$ is $\gg q^{n-1}$ and that $\abs{B_q}$ is
  monotonic. Finally, it follows from \eqref{eq:Ameasure} that
  \[\sum_{\abs{\q}\leq Q}\abs*{A^\pi(\q,B_{\abs{\q}})\cap W} \gg
    \sum_{\abs{\q}\leq Q}\abs*{A(\q,B_{\abs{\q}})}\abs*{W},\] which
  proves the lemma.
\end{proof}

The following proof takes the same steps as the previous one, and only
has been modified to accommodate the definition of $A'(\q)$, which is
different from the definition of $A^\pi(\q)$.

\begin{proof}[Proof of Lemma~\ref{lem:uniformitydsc}]
  As in the previous proof, we find a finite union $V$ of disjoint
  balls contained in $U$ such that $\abs{V}\geq \abs{U}/2$, and we
  assume all the balls in $V$ have the same radius, $\gamma>0$.

  Let $W\subset \I^{nm}$ be any ball of radius $\gamma$. It is
  enough to show that
  \begin{equation} \label{eq:mixingballssubcasedsc}
    \sum_{\abs{\q}=q}\abs*{A'(\q, B) \cap W} \geq C' \sum_{\abs{\q}=q}\abs*{A (\q,B)}\abs*{W}
  \end{equation}
  for all $q\geq Q_\gamma$, where $C'>0$ is some absolute constant which
  may depend on $n,m$, and $Q_\gamma$ only depends on
  $\gamma$. Importantly, $Q_\gamma$ does not depend on $B$ and so,
  given \eqref{eq:mixingballssubcasedsc}, one can deduce the lemma
  with $C = C'/2$.

  Let us write $\x \in \I^{nm}$ as $\x = (\x_1, \x_2,\cdots, \x_m)$
  where $\x_j$ are column vectors. Then for any $\q \subseteq \ZZ^n$ we have
  that
  \begin{equation*}
    \q\x = (\q\cdot\x_1, \dots, \q\cdot \x_m) \in \I^m.
  \end{equation*}

  The condition that
  \begin{equation*}
    \q\x + \p\in B\qquad\textrm{for}\qquad \p\in\ZZ^m, \forall i\in[m],\quad \gcd(p_i,\q)=1
  \end{equation*}
  is equivalent to $\q\cdot \x_i + p_i \in B_i$ for each $i$, where
  $B_i$ is the projection of $B$ to the $i$th coordinate. Therefore,
  we have
  \begin{equation}\label{eq:productdsc}
    A'(\q,B)\cap W = \parens*{\prod_{i=1}^m A_{n,1}'(\q, B_i)}\cap W =  \prod_{i=1}^m A_{n,1}'(\q, B_i)\cap W_i, 
  \end{equation}
  where $W_i$ is the projection of $W$ to the copy of
  $\I^n$ corresponding to the $i$th component of
  the above product.

  Let us now find the ($n$-dimensional Lebesgue) measure of
  $A_{n,1}'(\q, B_i)\cap W_i$. We have
  \begin{equation*}
    A_{n,1}'(\q, B_i) = \set*{\x \in \I^n : \exists p \in\ZZ , \quad \gcd(p,\q)=1, \quad \q\x - p \in B_i}.
  \end{equation*}
  Suppose for now that $\abs{\q}$ is achieved in the first coordinate, $q_1=q$. For any
  $\bz\in\I^{n-1}$ (representing the coordinates
  in the rows $2, \dots, n$ of $\I^n$), let
  \begin{equation*}
    S_\bz = \parens*{A_{n,1}'(\q, B_i)\cap W_i}_\bz = A_{n,1}'(\q, B_i)_\bz\cap (W_i)_\bz
  \end{equation*}
  be the cross-section through $\bz$ parallel to the first coordinate
  axis. Then
  \begin{align*}
    \abs*{A_{n,1}'(\q, B_i)\cap W_i} &= \int_{\I^{n-1}}\abs{S_\bz}\,d\bz \\
    &= \int_{Y_i}\abs{S_\bz}\,d\bz
  \end{align*}
  where $Y_i$ is the projection of $W_i$ to the last $n-1$
  coordinates. Meanwhile, $(W_i)_\bz$ is an interval and
  \begin{align*}
    A_{n,1}'(\q, B_i)_\bz
    &= \set*{x \in \I : \exists p \in \ZZ, \quad \gcd(p,\q)=1, \quad \q\binom{x}{\bz} - p \in B_i}\\
    &= \set*{x \in \I : \exists p\in \ZZ, \quad \gcd(p,\q)=1, \quad qx + \q \binom{0}{\bz} - p \in B_i}
  \end{align*}
  It is a union of disjoint intervals of diameter $\abs{B}^{1/m}/q$ with
  centers at the points $p/q$ such that $\gcd(p, \q)=1$ in
  $(\I^n)_{\bz} \cong \I$, translated by
  $\q \binom{0}{\bz} + \operatorname{center}(B_i)$. Let
  $M_i(\q, W)$ be the number of such center points
  which are also contained in the interval  $\frac{1}{2}W_i$ in
  $\I$, that is, 
  \begin{equation*}
    M_i(\q, W) = \#\set*{p \in \ZZ : \gcd(p, \q)=1,\quad p/q \in \frac{1}{2}W_i + t},
  \end{equation*}
  where $t$ is the translation vector as above.  As before, the reason
  considering the contracted interval $\frac{1}{2}W_i$ is that each
  relevant $p/q$ is the center of a diameter-$\abs{B}^{1/m}/q$
  sub-interval of $A_{n,1}'(\q, B_i)_\bz$ which is fully contained in
  $W_i$. We can therefore bound
  \begin{equation*}
   \abs{S_\bz} \geq M_i(\q, W) \frac{\abs{B}^{1/m}}{q} 
  \end{equation*}
  and
  \begin{align*}
    \abs*{A_{n,1}'(\q, B_i)\cap W_i}
    &= \int_{Y_i}\abs{S_\bz}\,d\bz \\
    &\geq M_i(\q, W)  \frac{\abs{B}^{1/m}}{q}\int_{Y_i} d\bz \\
    &= M_i(\q, W)  \frac{\abs{B}^{1/m}}{q}\abs{Y_i} \\
    &= M_i(\q, W)  \frac{\abs{B}^{1/m}}{q}(2\gamma)^{n-1}
  \end{align*}
  Since the argument in this paragraph did not depend on the
  assumption that $\abs{\q}$ was achieved in the first of the $n$
  coordinates, the measure calculation would have come out the same
  regardless of that assumption.
    
  Now, by~(\ref{eq:productdsc}), we have
  \begin{equation*}
    \abs*{A'(\q,B)\cap W} \geq \frac{\abs{B}}{q^{m}}(2\gamma)^{m(n-1)} \prod_{i=1}^m M_i(\q, W).
  \end{equation*}
  Finally, Lemma~\ref{lem:niederreiter} tells us that
  \begin{equation*}
    \prod_{i=1}^m M_i(\q, W) \geq \frac{1}{2^m} q^m\parens*{\frac{\varphi(\gcd(\q))}{\gcd(\q)}}^m\gamma^m 
  \end{equation*}
  as long as $q \geq Q_\gamma$. Then for $q\geq Q_\gamma$ we have
  \begin{equation}\label{eq:4}
    \sum_{\abs{\q}=q}\abs*{A'(\q,B)\cap W}
\geq \bar C \abs{B}\gamma^{mn}  \sum_{\abs{\q} = q} \parens*{\frac{\varphi(\gcd(\q))}{\gcd(\q)}}^m
\end{equation}
Meanwhile, 
\begin{align}
  \sum_{\abs{\q} = q} \parens*{\frac{\varphi(\gcd(\q))}{\gcd(\q)}}^m
  &\asymp \sum_{\substack{\q \in \ZZ^{n-1} \\ \abs{\q} \leq q}}\parens*{\frac{\varphi(\gcd(\q,q))}{\gcd(\q,q)}}^m \nonumber\\
      &\geq \sum_{\substack{\q \in \ZZ^{n-1}\, \abs{\q} \leq q \\ \gcd(\q)=1}}\parens*{\frac{\varphi(\gcd(\q,q))}{\gcd(\q,q)}}^m \nonumber \\
      &= \sum_{\substack{\q \in \ZZ^{n-1}\, \abs{\q} \leq q \\ \gcd(\q)=1}} 1 \nonumber\\
      &\gg q^{n-1},\label{eq:10}
    \end{align}
    by Lemma~\ref{lem:primitiveball}. Putting this back into~(\ref{eq:4}) gives
  \begin{equation*}
    \sum_{\abs{\q}=q}\abs*{A'(\q,B)\cap W}
    \gg \abs{B}\gamma^{mn}q^{n-1} 
\end{equation*}
with an absolute implicit constant, for all sufficiently large
$q$. This establishes~(\ref{eq:mixingballssubcasedsc}) and proves the
lemma. 
\end{proof}

\section{Proofs of Theorems~\ref{cor:monotone},~\ref{cor:nonmonotone}, and~\ref{cor:idsc}}

  We require two lemmas
  from measure theory:

\begin{lemma}[Divergence Borel--Cantelli Lemma,~{\cite[Lemma~2.3]{Harman}}]\label{lem:borelcantelli}
  Suppose $(X, \mu)$ is a finite measure space and
  $(A_q)_{q \in \N} \subset X$ is a sequence of measurable subsets
  such that $\sum \mu(A_q)=\infty$. Then
  \begin{equation} \label{DBC}
    \mu\parens*{\limsup_{q\to\infty} A_q} \geq \limsup_{Q\to\infty} \frac{\parens*{\sum_{q=1}^Q \mu(A_q)}^2}{\sum_{q,r=1}^Q\mu(A_q\cap A_r)}.
  \end{equation}
\end{lemma}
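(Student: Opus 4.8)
The plan is to deduce this from the Cauchy--Schwarz (second moment) inequality together with continuity of the finite measure $\mu$. Write $E_Q = \sum_{q=1}^{Q}\mu(A_q)$ and $V_Q = \sum_{q,r=1}^{Q}\mu(A_q\cap A_r)$, and consider the simple function $f_Q = \sum_{q=1}^{Q}\mathbf{1}_{A_q}$. Then $\int f_Q\,d\mu = E_Q$, $\int f_Q^2\,d\mu = V_Q$, and $f_Q$ is supported on $\bigcup_{q=1}^{Q}A_q$. Applying Cauchy--Schwarz to $\int f_Q\cdot\mathbf{1}_{\{f_Q>0\}}\,d\mu$ and squaring yields $E_Q^2 \leq V_Q\,\mu\bigl(\bigcup_{q=1}^{Q}A_q\bigr)$, that is, $\mu\bigl(\bigcup_{q\leq Q}A_q\bigr)\geq E_Q^2/V_Q$. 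Since $\sum\mu(A_q)=\infty$ we have $E_Q\to\infty$, and since $V_Q\geq\sum_{q=1}^{Q}\mu(A_q\cap A_q)=E_Q$ the quotient is well defined and positive for all large $Q$ (and automatically at most $\mu(X)$).

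Next I would promote this to a bound for the $\limsup$ set. Fix $N\in\N$ and run the same Cauchy--Schwarz argument with $\sum_{q=N}^{Q}\mathbf{1}_{A_q}$ in place of $f_Q$ to obtain $\mu\bigl(\bigcup_{q=N}^{Q}A_q\bigr)\geq (E_Q-E_{N-1})^2/\sum_{q,r=N}^{Q}\mu(A_q\cap A_r)\geq (E_Q-E_{N-1})^2/V_Q$, using that the restricted double sum is at most $V_Q$. Because $E_Q\to\infty$ we have $(E_Q-E_{N-1})^2/E_Q^2\to 1$ as $Q\to\infty$, and multiplying the nonnegative sequence $E_Q^2/V_Q$ by a sequence tending to $1$ does not alter its $\limsup$; hence $\mu\bigl(\bigcup_{q\geq N}A_q\bigr)\geq\limsup_{Q\to\infty}(E_Q-E_{N-1})^2/V_Q = \limsup_{Q\to\infty}E_Q^2/V_Q$ for every $N$. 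Finally, $\limsup_{q\to\infty} A_q=\bigcap_{N\geq 1}\bigcup_{q\geq N}A_q$ is a decreasing intersection of sets of finite measure, so continuity from above gives $\mu(\limsup_{q\to\infty} A_q)=\lim_{N\to\infty}\mu\bigl(\bigcup_{q\geq N}A_q\bigr)\geq\limsup_{Q\to\infty}E_Q^2/V_Q$, which is~(\ref{DBC}).

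I do not expect a real obstacle here: the statement is classical, and if one prefers, one may simply cite~\cite[Lemma~2.3]{Harman} verbatim. The only two points needing a little care are (i) checking that the denominators $V_Q$ are finite and eventually positive, so that the quotients in~(\ref{DBC}) make sense --- both immediate from $\mu(X)<\infty$ and $E_Q\to\infty$ --- and (ii) the elementary but slightly delicate $\limsup$ manipulation in the second step, where one must remember that it is $\limsup_{Q\to\infty}$ (rather than a genuine limit) that is being preserved when passing from $(E_Q-E_{N-1})^2/V_Q$ to $E_Q^2/V_Q$.
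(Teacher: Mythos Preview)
Your argument is correct and is the standard Cauchy--Schwarz (Chung--Erd\H{o}s) proof of the divergence Borel--Cantelli lemma. Note, however, that the paper does not supply its own proof of this lemma: it is stated with an explicit citation to~\cite[Lemma~2.3]{Harman} and then used as a black box in the proofs of Theorems~\ref{thm:monotone},~\ref{thm:nonmonotone}, and~\ref{thm:idsc}. So there is nothing to compare against --- your proof simply fills in what the paper outsources to Harman, and your closing remark that one may ``cite~\cite[Lemma~2.3]{Harman} verbatim'' is exactly what the authors do.
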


If the expression on the right-hand side of \eqref{DBC} is strictly
positive, then we say that the sets $(A_q)_{q \in \N}$ are
\emph{quasi-independent on average}.

\begin{lemma}[{\cite[Lemma 6]{BDV}}]\label{lem:lebesguedensity}
  Let $(X,d)$ be a metric space with a finite measure $\mu$ such that
  every open set is $\mu$-measurable. Let $A$ be a Borel subset of $X$
  and let $f:\RR_{\geq 0}\to\RR_{\geq 0}$ be an increasing function with $f(x)\to 0$ as
  $x\to 0$. If for every open set $U\subset X$ we have
  \begin{equation*}
    \mu(A\cap U) \geq f(\mu(U)),
  \end{equation*}
  then $\mu(A) = \mu(X)$.
\end{lemma}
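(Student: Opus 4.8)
The plan is to argue by contradiction. Suppose $\mu(A)<\mu(X)$ and set $E:=X\setminus A$, so that $\delta:=\mu(E)=\mu(X)-\mu(A)>0$ (note $E$ is $\mu$-measurable since $A$ is Borel). The goal is to exhibit a \emph{single} open set $V\subset X$ for which $\mu(A\cap V)$ is tiny compared with $\mu(V)$ while $\mu(V)$ stays bounded below by a fixed positive quantity; since $f$ is increasing with $f(x)\to 0$ as $x\to 0$ and is positive on $(0,\infty)$ (as it is in every intended application of the lemma), this will contradict the hypothesis $\mu(A\cap V)\ge f(\mu(V))$.

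To construct $V$, I would apply the Lebesgue density theorem to $E$: for $\mu$-almost every $x\in E$ one has $\mu(E\cap B(x,r))/\mu(B(x,r))\to 1$, equivalently $\mu(A\cap B(x,r))/\mu(B(x,r))\to 0$, as $r\to 0$. Fix $\varepsilon>0$, to be specified below. The set $D$ of such density points satisfies $\mu(D)=\delta$, and each $x\in D$ admits arbitrarily small radii $r$ with $\mu(A\cap B(x,r))<\varepsilon\,\mu(B(x,r))$. These balls form a fine (Vitali) cover of $D$, so the Vitali covering theorem --- or, for less regular $\mu$, the Besicovitch covering theorem --- yields a countable pairwise disjoint subfamily $\{B(x_i,r_i)\}_i$ with $\mu\bigl(D\setminus\bigcup_i B(x_i,r_i)\bigr)=0$. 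Set $V:=\bigcup_i B(x_i,r_i)$; this is open, $\mu(V)\ge\mu(D)=\delta$, and by disjointness $\mu(A\cap V)=\sum_i\mu(A\cap B(x_i,r_i))<\varepsilon\sum_i\mu(B(x_i,r_i))=\varepsilon\,\mu(V)\le\varepsilon\,\mu(X)$.

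Now choose $\varepsilon$ so small that $\varepsilon\,\mu(X)<f(\delta)$. Because $\mu(V)\ge\delta$ and $f$ is increasing, $\mu(A\cap V)<\varepsilon\,\mu(X)<f(\delta)\le f(\mu(V))$, contradicting the hypothesis applied to the open set $U=V$. Hence $\mu(X\setminus A)=0$, i.e.\ $\mu(A)=\mu(X)$. (Of course, since the statement is quoted from \cite{BDV}, one may alternatively just cite it; the sketch above is the standard route.)

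The main obstacle is the validity of a density/differentiation theorem for $\mu$: it holds for Lebesgue measure on $\I^{nm}$ (and for the Hausdorff measures appearing elsewhere in the paper), but not for arbitrary finite Borel measures on arbitrary metric spaces, so in full generality one must either restrict to measures carrying such covering structure or extract the needed properties by hand. A secondary subtlety worth flagging is that aggregating many small low-density balls into a single $V$ is genuinely necessary: shrinking one ball around a density point only gives $\mu(A\cap B(x,r))=o(\mu(B(x,r)))$, which need not beat $f(\mu(B(x,r)))$ once $f$ is superlinear near $0$ (e.g.\ $f(x)\asymp x^2$, the case relevant here), whereas the Vitali packing keeps the comparison at the fixed scale $\delta$.
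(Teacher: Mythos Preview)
The paper does not prove this lemma; it simply cites \cite[Lemma~6]{BDV}. Your argument via Lebesgue density points and a Vitali/Besicovitch extraction is sound in the Euclidean setting that the paper actually uses (Lebesgue measure on $\I^{nm}$), and you are right to flag that it does not go through for an arbitrary finite measure on an arbitrary metric space, since differentiation theorems require structural hypotheses (doubling, or a Besicovitch-type covering property).

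That said, the standard proof---and the one in \cite{BDV}---is both simpler and works in the stated generality. It uses only that every finite Borel measure on a metric space is outer regular (closed sets being $G_\delta$). If $\mu(X\setminus A)=c>0$, choose for each $\varepsilon>0$ an open $U\supset X\setminus A$ with $\mu(U)<c+\varepsilon$; then $\mu(A\cap U)=\mu(U)-\mu(X\setminus A)<\varepsilon$ while $\mu(U)\geq c$, so the hypothesis forces $\varepsilon>\mu(A\cap U)\geq f(\mu(U))\geq f(c)$ for every $\varepsilon>0$, contradicting $f(c)>0$. This avoids density and covering theorems entirely, and your observation that one must keep the comparison at a fixed scale is handled automatically: the open set $U$ has measure pinned near $c$ from the outset. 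Your approach does work where you claim it does, but the outer-regularity route is what buys the lemma its full generality.
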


Theorem~\ref{cor:monotone} is a consequence of the following theorem.

\begin{theorem}\label{thm:monotone}
  Let $m,n\in\NN$ be such that $nm>2$. Suppose
  $\pi = \set{\pi_1, \dots, \pi_k}$ is a partition of $[m+n]$ with
  $\abs{\pi_j}\geq 2$ for $j =1,\dots,k$. If
  $\Psi:=(B_q)_{q=1}^\infty\subset \RR^m$ is a sequence of balls such
  that $\abs{B_q}$ is non-increasing and such that
  $\sum q^{n-1}\abs{B_q}$ diverges, then for almost every
  $\x\in \operatorname{Mat}_{n\times m}(\RR)$ there exist infinitely
  many points $(\p,\q)\in P(\pi)$ such that
  \begin{equation}\label{eq:2}
    \q\x - \p \in B_{\abs{\q}}.
  \end{equation}
  Conversely, if $\sum q^{n-1}\abs{B_q}$ converges, then for almost
  every $\x\in \operatorname{Mat}_{n\times m}(\RR)$ there are only
  finitely many $(\p,\q)\in P(\pi)$ such that~(\ref{eq:2}) holds.
\end{theorem}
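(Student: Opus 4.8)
The plan is to obtain Theorem~\ref{thm:monotone} from the divergence Borel--Cantelli lemma (Lemma~\ref{lem:borelcantelli}), the measure comparison of Lemma~\ref{lem:uniformity}, and the Lebesgue density criterion of Lemma~\ref{lem:lebesguedensity}. Write $A^\pi(\q) = A^\pi(\q,B_{|\q|})$ and $A(\q) = A(\q,B_{|\q|})$; note $A^\pi(\q)\subseteq A(\q)$, since forgetting the primitivity constraint on $\p$ only enlarges the set, and by~\eqref{eq:Ameasure} we have $|A(\q)| = \min\{|B_{|\q|}|,1\}$, so that $\sum_{|\q|=q}|A(\q)| \asymp q^{n-1}\min\{|B_q|,1\}$ because there are $\asymp q^{n-1}$ vectors $\q\in\ZZ^n$ with $|\q|=q$. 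The convergence half is then the easy direction: if $\sum_q q^{n-1}|B_q|<\infty$ then $|B_q|\to 0$, hence $\sum_{\q}|A^\pi(\q)| \le \sum_{\q}|A(\q)| \asymp \sum_q q^{n-1}|B_q| < \infty$, and the convergence Borel--Cantelli lemma gives that for almost every $\x$ only finitely many $\q$ satisfy $\x\in A^\pi(\q)$, that is, only finitely many $(\p,\q)\in P(\pi)$ satisfy~\eqref{eq:2}. Using monotonicity of $|B_q|$ one checks that divergence of $\sum_q q^{n-1}|B_q|$ is unaffected by replacing each $B_q$ by a concentric sub-ball of measure $\min\{|B_q|,1\}$, so for the divergence half we may assume $|B_q|\le 1$ for all $q$ and must show that the Borel set $A^* := \limsup_{\q} A^\pi(\q)$ (the set of $\x\in\I^{nm}$ lying in infinitely many $A^\pi(\q)$) has full measure in $\I^{nm}$; the passage to $\operatorname{Mat}_{n\times m}(\RR)$ is then routine by $\ZZ^{nm}$-periodicity.

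For the divergence half I need the sets $A^\pi(\q)$, ordered by $|\q|$, to be quasi-independent on average. The first-moment lower bound comes from Lemma~\ref{lem:uniformity}: if $\pi$ admits an $\ell$ with $|\pi_\ell|\ge 3$ and $\pi_\ell\cap(m+[n])\ne\emptyset$, apply part (a) with $B = B_q$ for each $q$; otherwise apply part (b), which is where the hypothesis that $|B_q|$ is non-increasing is used. Either way, for every open $U\subseteq\I^{nm}$ there is $Q_U$ with
\[
\sum_{|\q|\le Q}|A^\pi(\q)\cap U| \;\ge\; C\,|U|\!\!\sum_{Q_U\le|\q|\le Q}\!\!|A(\q)| \;\asymp\; C\,|U|\sum_{q\le Q}q^{n-1}|B_q| \;\longrightarrow\;\infty
\]
as $Q\to\infty$. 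For the second moment, since $A^\pi(\q)\subseteq A(\q)$ we have $|A^\pi(\q)\cap A^\pi(\q')| \le |A(\q)\cap A(\q')|$, and the standard overlap estimate for the unconstrained sets $A(\q)$ --- valid under the hypotheses $nm>2$ and $|B_q|$ non-increasing, as in the inhomogeneous Khintchine--Groshev theorem (cf.~\cite{S, AR, BV}) --- gives
\[
\sum_{|\q|,|\q'|\le Q}|A^\pi(\q)\cap A^\pi(\q')| \;\ll\; \Bigl(\sum_{|\q|\le Q}|A(\q)|\Bigr)^{2} + \sum_{|\q|\le Q}|A(\q)|.
\]
This overlap estimate is the technical heart of the proof and the main obstacle: it is exactly where the dimension condition $nm>2$ (together with monotonicity) enters, just as in the unconstrained theory. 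If a self-contained treatment is preferred, it can be proved directly by slicing $A^\pi(\q)\cap A^\pi(\q')$ using the product decomposition from the proof of Lemma~\ref{lem:uniformity} and counting lattice points on the overlaps, but I would quote it.

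To finish, fix an open set $U\subseteq\I^{nm}$ and apply Lemma~\ref{lem:borelcantelli} in the finite measure space $(U,\Leb)$ to the sets $A^\pi(\q)\cap U$. Since $\sum_{|\q|\le Q}|A(\q)|\to\infty$, the two displays above give
\[
|A^*\cap U| \;\ge\; \limsup_{Q\to\infty}\frac{\bigl(\sum_{|\q|\le Q}|A^\pi(\q)\cap U|\bigr)^{2}}{\sum_{|\q|,|\q'|\le Q}|A^\pi(\q)\cap A^\pi(\q')|} \;\gg\; |U|^{2}.
\]
Thus $|A^*\cap U|\ge f(|U|)$ for every open $U$, with $f(t)=c\,t^{2}$ an increasing function tending to $0$ as $t\to 0$, so Lemma~\ref{lem:lebesguedensity} forces $|A^*| = |\I^{nm}| = 1$. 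For almost every $\x$ we therefore have $\x\in A^\pi(\q)$ for infinitely many $\q$; as distinct $\q$ yield distinct pairs $(\p,\q)\in P(\pi)$ with $\q\x+\p\in B_{|\q|}$, this is precisely the divergence conclusion of Theorem~\ref{thm:monotone}.
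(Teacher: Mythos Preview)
Your proposal is correct and follows essentially the same route as the paper: quasi-independence of the unconstrained sets $A(\q)$ (quoted from~\cite{AR}) bounds the second moment via $A^\pi(\q)\subseteq A(\q)$, Lemma~\ref{lem:uniformity} supplies the first-moment lower bound (part~(a) or~(b) according to whether the special component $\pi_\ell$ exists), and Lemmas~\ref{lem:borelcantelli} and~\ref{lem:lebesguedensity} finish the argument. One minor remark: the overlap estimate from~\cite[Proposition~2, Theorem~8]{AR} does not in fact require monotonicity when $nm>2$ --- this is precisely what drives Theorem~\ref{thm:nonmonotone} --- so monotonicity is used only through part~(b) of Lemma~\ref{lem:uniformity}, not in the second-moment bound.
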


\begin{proof}[Proof of Theorem~\ref{cor:monotone}]
  If $nm>2$, then this is the case of Theorem~\ref{thm:monotone} where
  the balls $B_q$ are concentric around $\y$.  In the cases
  $(m,n) = (2,1)$ and $(m,n)=(1,2)$, the only possible partition is
  the trivial partition. In the case $(m,n)=(1,2)$, the corollary is
  already contained in~\cite[Chapter 1, Theorem~14]{S},
  and in the case $(m,n)=(2,1)$ it is in the inhomogeneous version of
  Khintchine's theorem due to Schmidt~\cite{Schmidt}. In the case
  $(m,n)=(1,1)$ the result is implied by the inhomogeneous Khintchine
  theorem due to Sz{\" u}sz~\cite{Szusz}.
\end{proof}

Theorem~\ref{cor:nonmonotone} is a consequence of the following. 

\begin{theorem}\label{thm:nonmonotone}
  Let $m,n\in\NN$ be such that $nm>2$. Suppose
  $\pi = \set{\pi_1, \dots, \pi_k}$ is a partition of $[m+n]$ such
  that $\abs{\pi_j}\geq 2$ for $j=1,\dots,k$ and for some
  $\ell \in \{1,\dots,k\}$ we have $\abs{\pi_\ell}\geq 3$ and
  \mbox{$\pi_\ell\cap (m+[n])\neq \emptyset$}. If
  $\Psi:=(B_q)_{q=1}^\infty\subset \RR^m$ is a sequence of balls such
  that $\sum q^{n-1}\abs{B_q}$ diverges, then for almost every
  $\x\in \operatorname{Mat}_{n\times m}(\RR)$ there exist infinitely
  many points $(\p,\q)\in P(\pi)$ such that~(\ref{eq:2}) holds.
  
  Conversely, if $\sum q^{n-1}\abs{B_q}$ converges, then for
  almost every $\x\in \operatorname{Mat}_{n\times m}(\RR)$ there are
  only finitely many $(\p,\q)\in P(\pi)$ such that~(\ref{eq:2}) holds.
\end{theorem}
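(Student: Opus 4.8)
The statement is a zero--full dichotomy for a $\limsup$ set, and I would prove the divergence (full-measure) half directly, the convergence half being a routine Borel--Cantelli argument. Work in $\I^{nm}$, write $A^\pi(\q) = A^\pi(\q, B_{\abs{\q}})$, and put $W^\pi := \limsup_{\abs{\q}\to\infty} A^\pi(\q)$, the set of $\x\in\I^{nm}$ lying in $A^\pi(\q)$ for $\q$ with arbitrarily large $\abs{\q}$. Unwinding the definition of $A^\pi$ (and replacing $\p$ by $-\p$, which does not affect membership in $P(\pi)$), the relation $\x\in W^\pi$ says precisely that there are infinitely many $(\p,\q)\in P(\pi)$ with $\q\x-\p\in B_{\abs{\q}}$. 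So the divergence half amounts to $\abs{W^\pi} = \abs{\I^{nm}}$; the passage to almost every $\x\in\operatorname{Mat}_{n\times m}(\RR)$ is dealt with at the end. We may assume $\abs{B_q}\le 1$ for all $q$ (larger balls being trivial), so that $\abs{A(\q,B_q)} = \abs{B_q}$ by~\eqref{eq:Ameasure}.

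The plan is: reduce to a local lower bound for $\abs{W^\pi\cap U}$ over open $U$ via Lemma~\ref{lem:lebesguedensity}; obtain that bound from the divergence Borel--Cantelli Lemma~\ref{lem:borelcantelli}, estimating its numerator with the uniformity Lemma~\ref{lem:uniformity}(a) and its denominator by comparison with the \emph{unconstrained} sets $A(\q)$. Concretely, fix an open $U\subseteq\I^{nm}$. Since $\limsup_{\abs{\q}\to\infty}(A^\pi(\q)\cap U)\subseteq W^\pi\cap U$, Lemma~\ref{lem:borelcantelli} applied to the family $(A^\pi(\q)\cap U)_{\q\in\ZZ^n}$ (with partial sums over $\abs{\q}\le Q$) gives
\[
  \abs*{W^\pi\cap U}\ \ge\ \limsup_{Q\to\infty}\ \frac{\bigl(\sum_{\abs{\q}\le Q}\abs*{A^\pi(\q)\cap U}\bigr)^2}{\sum_{\abs{\q},\abs{\q'}\le Q}\abs*{A^\pi(\q)\cap A^\pi(\q')\cap U}},
\]
once the numerator is shown to diverge.

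For the numerator, Lemma~\ref{lem:uniformity}(a)---this is exactly where the hypotheses $\abs{\pi_j}\ge 2$ and the existence of $\ell$ with $\abs{\pi_\ell}\ge 3$ and $\pi_\ell\cap(m+[n])\ne\emptyset$ enter---yields $C_\pi>0$ and $Q_U$ with $\sum_{\abs{\q}=q}\abs*{A^\pi(\q,B_q)\cap U}\ge C_\pi\abs{U}\sum_{\abs{\q}=q}\abs*{A(\q,B_q)}$ for $q\ge Q_U$; since $\abs{A(\q,B_q)} = \abs{B_q}$ and $\#\set{\q\in\ZZ^n:\abs{\q}=q}\asymp q^{n-1}$, summing over $Q_U\le q\le Q$ gives $\sum_{\abs{\q}\le Q}\abs*{A^\pi(\q)\cap U}\gg\abs{U}\sum_{q\le Q}q^{n-1}\abs{B_q}\to\infty$ (implicit constant depending on $\pi$; the finitely many terms with $q<Q_U$ are harmless). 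For the denominator---the crux---the inclusion $A^\pi(\q)\subseteq A(\q)$ gives $\sum_{\abs{\q},\abs{\q'}\le Q}\abs*{A^\pi(\q)\cap A^\pi(\q')\cap U}\le\sum_{\abs{\q},\abs{\q'}\le Q}\abs*{A(\q)\cap A(\q')}$, after which I would invoke quasi-independence on average of the $A(\q)$: for $nm>2$ one has $\sum_{\abs{\q},\abs{\q'}\le Q}\abs*{A(\q)\cap A(\q')}\ll\bigl(\sum_{\abs{\q}\le Q}\abs*{A(\q)}\bigr)^2\asymp\bigl(\sum_{q\le Q}q^{n-1}\abs{B_q}\bigr)^2$. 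This is the estimate underpinning the non-monotonic inhomogeneous Khintchine--Groshev theorem, and it holds in the generality needed here---moving balls $B_q$, $nm>2$---by~\cite{AR}. Substituting both bounds into the displayed ratio gives $\abs*{W^\pi\cap U}\ge c_\pi\abs{U}^2$ for every open $U$; since $x\mapsto c_\pi x^2$ is increasing and tends to $0$ at $0$, Lemma~\ref{lem:lebesguedensity} forces $\abs{W^\pi} = \abs{\I^{nm}}$.

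The convergence half is immediate: $A^\pi(\q)\subseteq A(\q)$ gives $\sum_{\q}\abs*{A^\pi(\q)}\le\sum_{\q}\abs{B_q}\asymp\sum_q q^{n-1}\abs{B_q}<\infty$, so the convergence Borel--Cantelli lemma applies. To pass from $\I^{nm}$ to $\operatorname{Mat}_{n\times m}(\RR)$, note that in a unit cube $Z+\I^{nm}$ with $Z$ an integer matrix the relevant primitivity set becomes the $\q$-dependent translate $\set{\p:(\p+\q Z,\q)\in P(\pi)}$, and the counting estimates of Sections~\ref{sec:arithmetic}--\ref{sec:counting}, hence Lemma~\ref{lem:uniformity}(a), are insensitive to translating the boxes, so the argument applies verbatim in each cube (and for the convergence direction only the translation-independent bound $\abs*{A^\pi(\q)}\le\abs{B_q}$ is needed). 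The single genuinely hard point is the denominator estimate: it is essential that quasi-independence on average of the unconstrained sets $A(\q)$ be available for $nm>2$ with balls whose centres may vary, and the role of~\cite{AR} is precisely to supply this black box; everything else is the assembly of Lemmas~\ref{lem:uniformity}(a),~\ref{lem:borelcantelli}, and~\ref{lem:lebesguedensity}.
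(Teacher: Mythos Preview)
Your proposal is correct and follows essentially the same route as the paper: bound the denominator in Lemma~\ref{lem:borelcantelli} via $A^\pi(\q)\subseteq A(\q)$ together with the quasi-independence on average of the unconstrained sets $A(\q)$ from~\cite{AR}, bound the numerator with Lemma~\ref{lem:uniformity}(a), and finish with Lemma~\ref{lem:lebesguedensity}. Your explicit discussion of the passage from $\I^{nm}$ to $\operatorname{Mat}_{n\times m}(\RR)$ is an extra point of care that the paper leaves implicit, but otherwise the arguments coincide.
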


\begin{proof}[Proof of Theorem~\ref{cor:nonmonotone}]
  This is the concentric version of Theorem~\ref{thm:nonmonotone}.
\end{proof}

Theorem~\ref{cor:idsc} is a consequence of the following.

\begin{theorem}[Inhomogeneous Duffin--Schaeffer conjecture for
  systems of linear forms]\label{thm:idsc}
  Let $m,n\in\NN$ with $n>2$. If
  $\Psi:=(B_q)_{q=1}^\infty\subset \RR^m$ is a sequence of balls such
  that
  \begin{equation}\label{eq:dscsum}
    \sum_{\q \in \ZZ^n} \parens*{\frac{\varphi(\gcd(\q))}{\gcd(\q)}}^m\abs{B_{\abs{\q}}} = \infty,
  \end{equation}
  then for almost every $\x\in \operatorname{Mat}_{n\times m}(\RR)$
  there exist infinitely many points $(\p,\q)\in\ZZ^m\times\ZZ^n$ with
  $\gcd(p_i, \q)=1$ for every $i=1, \dots, m$ and such
  that~(\ref{eq:2}) holds. 
  
Conversely, if the sum in~(\ref{eq:dscsum})
  converges, then for almost every
  $\x\in \operatorname{Mat}_{n\times m}(\RR)$ there are only finitely
  many such $(\p,\q)\in\ZZ^m\times\ZZ^n$.
\end{theorem}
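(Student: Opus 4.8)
The plan is to derive Theorem~\ref{thm:idsc} from the uniformity estimate of Lemma~\ref{lem:uniformitydsc} together with Lemmas~\ref{lem:borelcantelli} and~\ref{lem:lebesguedensity}, using the hypothesis $n>2$ to reduce everything to \emph{primitive} $\q$. The convergence half is routine: since $n>2$, Lemma~\ref{lem:primitiveball} (with $D=n$) and the trivial bound $\#\set{\q:\abs\q=q}\asymp q^{n-1}$ show that the series in~\eqref{eq:dscsum} is comparable to $\sum_{q\ge1}q^{n-1}\abs{B_q}$, so if it converges then $\sum_{\q}\abs{A'(\q)}\le\sum_{\q}\abs{A(\q)}=\sum_{\q}\min\set{\abs{B_{\abs\q}},1}\ll\sum_q q^{n-1}\abs{B_q}<\infty$, and the convergence Borel--Cantelli lemma gives the claim. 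For the divergence half we work on the torus $\I^{nm}$ with Lebesgue measure, and after a standard reduction (replacing each $B_q$ by a concentric ball of radius $\le 1$ only shrinks the target set while, by the comparison just noted, preserving divergence) we may assume $\abs{B_q}\le1$ for every $q$. Because $n-1\ge2$, Lemma~\ref{lem:primitiveball} also gives $\#\set{\q\in\ZZ^n:\abs\q=q,\ \gcd\q=1}\asymp q^{n-1}\asymp\#\set{\q:\abs\q=q}$, so
\[
  \sum_{\q\in\ZZ^n}\parens*{\frac{\varphi(\gcd\q)}{\gcd\q}}^m\abs{B_{\abs\q}}\ \asymp\ \sum_{q\ge1}q^{n-1}\abs{B_q}\ \asymp\ \sum_{\substack{\q\in\ZZ^n\\ \gcd\q=1}}\abs{B_{\abs\q}},
\]
and the divergence hypothesis is equivalent to $\sum_{\gcd\q=1}\abs{B_{\abs\q}}=\infty$, i.e.\ to $S_\infty:=\sum_{q\ge1}q^{n-1}\abs{B_q}=\infty$. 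For primitive $\q$ the condition $\gcd(p_i,\q)=1$ is automatic, so $A'(\q)=A(\q)$; hence the set whose full measure we want contains $\limsup_{\q\ \mathrm{prim.}}A(\q)$, and it suffices to show this $\limsup$ set has full measure.

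To that end, fix an open set $U\subseteq\I^{nm}$ and apply the quantitative divergence Borel--Cantelli lemma (Lemma~\ref{lem:borelcantelli}) in the finite measure space $U$ to the family $\set{A(\q)\cap U:\q\ \mathrm{prim.}}$:
\[
  \Bigl|U\cap\limsup_{\q\ \mathrm{prim.}}A(\q)\Bigr|\ \ge\ \limsup_{R\to\infty}\ \frac{\bigl(\sum_{\q\ \mathrm{prim.},\,\abs\q\le R}\abs{A(\q)\cap U}\bigr)^2}{\sum_{\q,\q'\ \mathrm{prim.},\,\abs\q,\abs{\q'}\le R}\abs{A(\q)\cap A(\q')\cap U}}.
\]
The argument in the proof of Lemma~\ref{lem:uniformitydsc}, with the sum restricted to primitive $\q$ (where $A'(\q)=A(\q)$ and the product of the counts $M_i(\q,W)$ carries no factor $\varphi(\gcd\q)/\gcd\q$), shows $\sum_{\q\ \mathrm{prim.},\,\abs\q=q}\abs{A(\q)\cap U}\gg\abs U\,q^{n-1}\abs{B_q}$ for all $q\ge Q_U$; summing over $q\le R$ bounds the numerator below by $\gg\abs U\,S_R$, where $S_R:=\sum_{q\le R}q^{n-1}\abs{B_q}\to\infty$ (in particular the relevant measures sum to infinity, so Lemma~\ref{lem:borelcantelli} applies). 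For the denominator, use $\abs{A(\q)\cap A(\q')\cap U}\le\abs{A(\q)\cap A(\q')}$ and split according to whether $\q'=\pm\q$. The pairs with $\q'=\pm\q$ contribute $\le 2\sum_{\q\ \mathrm{prim.},\,\abs\q\le R}\abs{B_{\abs\q}}\asymp S_R$. If $\q'\ne\pm\q$ then the primitive vectors $\q,\q'$ are linearly independent, so $\x\mapsto\q\cdot\x$ and $\x\mapsto\q'\cdot\x$ are independent linear forms and, using that each $B_q$ is a product of its coordinate projections, a Fubini computation gives $\abs{A(\q)\cap A(\q')}=\prod_{i=1}^m\abs{A_{n,1}(\q,J_i)\cap A_{n,1}(\q',J_i')}\le\prod_{i=1}^m\abs{J_i}\abs{J_i'}=\abs{B_{\abs\q}}\abs{B_{\abs{\q'}}}$, where $J_i$, $J_i'$ denote the $i$-th coordinate projections of $B_{\abs\q}$, $B_{\abs{\q'}}$; summing over all such pairs bounds their contribution by $\bigl(\sum_{\q\ \mathrm{prim.},\,\abs\q\le R}\abs{B_{\abs\q}}\bigr)^2\asymp S_R^2$. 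Thus the ratio above is $\gg\abs U^2 S_R^2/S_R^2=\abs U^2$, with an implied constant independent of $U$, so $\bigl|U\cap\limsup_{\q\ \mathrm{prim.}}A(\q)\bigr|\gg\abs U^2$ for every open $U$; Lemma~\ref{lem:lebesguedensity} with $f(x)=cx^2$ then forces $\limsup_{\q\ \mathrm{prim.}}A(\q)$, and hence the target set, to have full Lebesgue measure.

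The one genuinely essential use of $n>2$ is this reduction to primitive $\q$: it is needed precisely so that primitive vectors are numerous enough that restricting the divergent series in~\eqref{eq:dscsum} to them loses nothing, which is where Lemma~\ref{lem:primitiveball} is applied with $D=n-1$ (exactly as inside the proof of Lemma~\ref{lem:uniformitydsc}). Without this, the denominator above would contain the parallel pairs $\q=a\widetilde\q$, $\q'=a'\widetilde\q$ with $\widetilde\q$ primitive, whose overlap sums $\sum_{a,a'}\abs{A'(a\widetilde\q)\cap A'(a'\widetilde\q)}$ are exactly the overlap sums of the classical Duffin--Schaeffer problem and require the full force of Koukoulopoulos--Maynard; this is why Conjecture~\ref{conj:idsc} (the cases $n\le2$) is stated separately. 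Every other step is soft --- the standard pairing of a second-moment estimate (Lemma~\ref{lem:borelcantelli}) with the Lebesgue-density criterion (Lemma~\ref{lem:lebesguedensity}), driven by the mixing estimate of Lemma~\ref{lem:uniformitydsc} --- so the main care required is simply in checking that the numerator bound of Lemma~\ref{lem:uniformitydsc} survives the restriction to primitive $\q$, which it does.
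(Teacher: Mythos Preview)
Your proof is correct, but the divergence half takes a different route from the paper's.

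The paper does not restrict to primitive $\q$. Instead it works with the full family $\set{A(\q)}_{\q\in\ZZ^n}$ and imports quasi-independence on average for these sets as a black box from~\cite[Proposition~2, Theorem~8]{AR} (and~\cite[Theorem~14]{S}). Since $A'(\q)\subseteq A(\q)$, this immediately gives the denominator bound
\[
  \sum_{1\le\abs\br\le\abs\q\le Q}\abs*{A'(\br)\cap A'(\q)\cap U}\ \le\ C\parens*{\sum_{\abs\q\le Q}\abs{A(\q)}}^2,
\]
and Lemma~\ref{lem:uniformitydsc} (applied as stated, summing over \emph{all} $\q$ with $\abs\q=q$) supplies the numerator bound. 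The rest is the standard combination of Lemmas~\ref{lem:borelcantelli} and~\ref{lem:lebesguedensity}, exactly as you do.

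Your argument instead observes that for $n>2$ the primitive $\q$ already carry the full divergent mass (via Lemma~\ref{lem:primitiveball} with $D=n-1$), that $A'(\q)=A(\q)$ for such $\q$, and that distinct (up to sign) primitive vectors are linearly independent, so the joint map $\x\mapsto(\q\cdot\x,\q'\cdot\x)$ pushes Lebesgue on $\TT^n$ to Lebesgue on $\TT^2$ and hence $\abs{A(\q)\cap A(\q')}=\abs{B_{\abs\q}}\abs{B_{\abs{\q'}}}$ exactly. This yields quasi-independence from scratch, with no appeal to~\cite{AR}. The trade-off: your approach is more self-contained and makes the role of the hypothesis $n>2$ very transparent (it is needed precisely to keep the primitive sphere count $\gg q^{n-1}$), while the paper's approach is shorter because the overlap estimate is outsourced, and it also makes visible that the only missing ingredient for $n\le2$ is the quasi-independence of the $A(\q)$ in those low-dimensional cases. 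Your convergence argument and the paper's are essentially the same, both reducing~\eqref{eq:dscsum} to $\sum_q q^{n-1}\abs{B_q}$ via the estimate~\eqref{eq:10}.
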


\begin{proof}[Proof of Theorem~\ref{cor:idsc}]
  This is the concentric version of the divergence part of
  Theorem~\ref{thm:idsc}.
\end{proof}

The following proofs of
Theorems~\ref{thm:monotone},~\ref{thm:nonmonotone}, and~\ref{thm:idsc}
are nearly identical.

\subsection{Proof of Theorem~\ref{thm:nonmonotone}}

\begin{proof}[Proof of Theorem~\ref{thm:nonmonotone}] 
  By~\cite[Proposition 2, Theorem 8]{AR} the sets $(A(\q))_{q\in\NN}$
  are quasi-independent on average, meaning that the right-hand side
  of the inequality in Lemma~\ref{lem:borelcantelli} is positive. From
  this and the fact that $A^\pi \subset A$, we get that there is some
  $C>0$ such that
  \begin{equation}\label{eq:5}
    \sum_{1\leq \br\leq \q \leq Q} \abs*{A^\pi(\br)\cap A^\pi(\q)\cap U} \leq C \parens*{\sum_{\abs{\q}=1}^Q \abs*{A(\q)}}^2
  \end{equation}
  for infinitely many $Q$. Meanwhile, we have
    \begin{equation}\label{eq:7}
    \sum_{\abs{\q}=q}\abs*{A^\pi(\q) \cap U} \geq C \sum_{\abs{\q}=q}\abs*{A(\q)}\abs*{U}
  \end{equation}
  for all $q\geq Q_U$ by Part (a) of
  Lemma~\ref{lem:uniformity}. In particular, combining this with~(\ref{eq:Ameasure}) and the fact that
    $\sum q^{n-1}\abs{B_q}$ diverges, we find that
    $\sum_{\q \in \Z^n} \abs{A^\pi(\q)\cap U}$ diverges.

  Combining~(\ref{eq:5}) and~(\ref{eq:7}) and collecting constants, we
  see that there is some constant $C>0$ such that
  \begin{equation*}
    \sum_{1\leq \br\leq \q \leq Q} \abs*{A^\pi(\br)\cap A^\pi(\q)\cap U} \leq \frac{C}{\abs{U}^2} \parens*{\sum_{\abs{\q}=1}^Q \abs*{A^\pi(\q)\cap U}}^2
  \end{equation*}
  holds for infinitely many $Q>0$. Therefore, by Lemma~\ref{lem:borelcantelli},
  \begin{equation*}
    \abs*{\limsup_{\abs{\q}\to\infty} A^\pi(\q)\cap U} \geq \frac{\abs{U}^2}{C}.
  \end{equation*}
  Finally, by Lemma~\ref{lem:lebesguedensity},
  $\limsup_{\abs{\q}\to\infty} A^\pi(\q)$ must have full measure in
  $\I^{nm}$ and the theorem is proved.

  For the convergence part of Theorem~\ref{thm:nonmonotone}, notice
  that convergence of $\sum_{q \in \N} q^{n-1}\abs{B_q}$
  and~(\ref{eq:Ameasure}) imply the convergence of
  $\sum_{\q \in \Z^n} \abs{A(\q)}$. The Borel--Cantelli lemma (see,
  e.g. \cite[Lemma 1.2]{Harman}) immediately tells us that
  $\limsup_{\abs{\q}\to\infty} A(\q)$, and hence also
  $\limsup_{\abs{\q}\to\infty} A^\pi(\q)$, has zero measure.
\end{proof}

\subsection{Proof of Theorem~\ref{thm:monotone}}

\begin{proof}[Proof of Theorem~\ref{thm:monotone}]
  Theorem~\ref{thm:nonmonotone} subsumes Theorem~\ref{thm:monotone}
  when there is some $\ell$ such that $\abs{\pi_\ell}\geq 3$ and
  $\pi_\ell \cap (m+[n])\neq \emptyset$, so let us assume there is no
  such $\ell$, but that the measures $\abs{B_q}$ are decreasing. Note
  that the standing assumption on $\pi$ that all components have at
  least two elements implies that $mn\neq 2$. And the case
  $(m,n)=(1,1)$ is already known (Khintchine's theorem~\cite{KT}). So
  let us assume $nm>2$.

  The proof begins exactly like the proof of
  Theorem~\ref{thm:nonmonotone}, to give that there is some $C>0$ such
  that
  \begin{equation*}
    \sum_{1\leq \br\leq \q \leq Q} \abs*{A^\pi(\br)\cap A^\pi(\q)\cap U} \leq C \parens*{\sum_{\abs{\q}=1}^Q \abs*{A(\q)}}^2
  \end{equation*}
  for infinitely many $Q$. Part (b) of Lemma~\ref{lem:uniformity} tells us that if
  $Q$ is large enough we have
    \begin{equation*}
    \sum_{\abs{\q}\leq Q}\abs*{A^\pi(\q) \cap U} \geq C \sum_{\abs{\q}\leq Q}\abs*{A(\q)}\abs*{U},
  \end{equation*}
which in combination with~(\ref{eq:Ameasure}) and the divergence of
  $\sum q^{n-1}\abs{B_q}$ gives that $\sum_\q \abs{A^\pi(\q)\cap U}$
  diverges.

  We are led as before to
  \begin{equation*}
    \sum_{1\leq \br\leq \q \leq Q} \abs*{A^\pi(\br)\cap A^\pi(\q)\cap U} \leq \frac{C}{\abs{U}^2} \parens*{\sum_{\abs{\q}=1}^Q \abs*{A^\pi(\q)\cap U}}^2
  \end{equation*}
  holding for infinitely many $Q \in \N$. Lemma~\ref{lem:borelcantelli} gives
  \begin{equation*}
    \abs*{\limsup_{\abs{\q}\to\infty} A^\pi(\q)\cap U} \geq \frac{\abs{U}^2}{C},
  \end{equation*}
  and Lemma~\ref{lem:lebesguedensity} tells us that
  $\limsup_{\abs{\q}\to\infty} A^\pi(\q)$ must have full measure in
  $\I^{nm}$, thus proving the theorem.

The convergence part of Theorem~\ref{thm:monotone}
is proved exactly as in Theorem~\ref{thm:nonmonotone}.
\end{proof}

\subsection{Proof of Theorem~\ref{thm:idsc}}

\begin{proof}[Proof of Theorem~\ref{thm:idsc}]
  We have $n>2$. In this case, it is found in~\cite[Proposition 2,
  Theorem 8]{AR} and also~\cite[Theorem~14]{S} that the sets
  $(A(\q))_{q\in\NN}$ are quasi-independent on average. Since
  $A' \subset A$, there is some $C>0$ such that
  \begin{equation}\label{eq:8}
    \sum_{1\leq \br\leq \q \leq Q} \abs*{A'(\br)\cap A'(\q)\cap U} \leq C \parens*{\sum_{\abs{\q}=1}^Q \abs*{A(\q)}}^2
  \end{equation}
  for infinitely many $Q$. Lemma~\ref{lem:uniformitydsc} tells us that
    \begin{equation}\label{eq:9}
    \sum_{\abs{\q}=q}\abs*{A'(\q) \cap U} \geq C \sum_{\abs{\q}=q}\abs*{A(\q)}\abs*{U}
  \end{equation}
  for all $q\geq Q_U$. The divergence condition~(\ref{eq:dscsum})
  and~(\ref{eq:4}) imply that $\sum_\q \abs{A'(\q)\cap U}$ diverges.

  Combining~(\ref{eq:8}) and~(\ref{eq:9}), there is some constant
  $C>0$ such that
  \begin{equation*}
    \sum_{1\leq \br\leq \q \leq Q} \abs*{A'(\br)\cap A'(\q)\cap U} \leq \frac{C}{\abs{U}^2} \parens*{\sum_{\abs{\q}=1}^Q \abs*{A'(\q)\cap U}}^2
  \end{equation*}
  holds for infinitely many $Q \in \N$. Therefore, by
  Lemma~\ref{lem:borelcantelli},
  \begin{equation*}
    \abs*{\limsup_{\abs{\q}\to\infty} A'(\q)\cap U} \geq \frac{\abs{U}^2}{C}.
  \end{equation*}
  Lemma~\ref{lem:lebesguedensity} now guarantees that
  $\limsup_{\abs{\q}\to\infty} A'(\q)$ has full measure in $\I^{nm}$
  and the theorem is proved.

For the convergence part, notice that if the sum
    in~(\ref{eq:dscsum}) converges, then by~(\ref{eq:10}) so does the
    sum $\sum q^{n-1}\abs{B_q}$, and we are back in the situation
    from the proofs of Theorems~\ref{thm:monotone}
    and~\ref{thm:nonmonotone}.
\end{proof}

\bibliographystyle{plain}


\end{document}